\documentclass[11pt]{article}

\usepackage[a4paper,margin=25mm]{geometry}
\usepackage[T1]{fontenc}
\usepackage{lmodern}
\usepackage{microtype}
\usepackage{amsmath,amssymb,amsthm,mathtools,bbm}
\usepackage{graphicx}
\usepackage{aliascnt}
\usepackage{enumitem}
\usepackage[hidelinks]{hyperref}
\usepackage[nameinlink,capitalise]{cleveref}
\usepackage{xcolor}

\hypersetup{
  pdftitle={Upper-shadow comparisons on the slice and the Frankl--Tokushige product conjectures},
  pdfauthor={Fan Chang, Hong Liu, Miao Liu},
  pdfsubject={Discrete isoperimetry and extremal set theory},
  pdfkeywords={upper shadows, discrete isoperimetry, Boolean functions, cross-intersecting families, biased measure}
}

\newtheorem{theorem}{Theorem}[section]
\newaliascnt{lemma}{theorem}
\newtheorem{lemma}[lemma]{Lemma}
\aliascntresetthe{lemma}
\newaliascnt{corollary}{theorem}
\newtheorem{corollary}[corollary]{Corollary}
\aliascntresetthe{corollary}
\newaliascnt{proposition}{theorem}
\newtheorem{proposition}[proposition]{Proposition}
\aliascntresetthe{proposition}

\theoremstyle{remark}
\newaliascnt{remark}{theorem}
\newtheorem{remark}[remark]{Remark}
\aliascntresetthe{remark}
\newtheorem{claim}{Claim}

\newcommand{\F}{\mathcal F}
\newcommand{\A}{\mathcal A}
\newcommand{\Sstar}{\mathcal S}
\newcommand{\e}{\mathrm e}

\title{Upper-shadow comparisons on the slice and the Frankl--Tokushige product conjectures}
\author{Fan Chang\thanks{School of Statistics and Data Science, Nankai University, Tianjin, China; and Extremal Combinatorics and Probability Group, Institute for Basic Science, Daejeon, South Korea. Email: \texttt{1120230060@mail.nankai.edu.cn}. Supported by the National Natural Science Foundation of China under grant 124B2019 and by the Institute for Basic Science under grant IBS-R029-C4.}
\and Hong Liu\thanks{Extremal Combinatorics and Probability Group, Institute for Basic Science, Daejeon, South Korea. Email: \texttt{hongliu@ibs.re.kr}. Supported by the Institute for Basic Science under grant IBS-R029-C4.}
\and Miao Liu\thanks{Research Center for Mathematics and Interdisciplinary Sciences, Shandong University, Qingdao, China; and Extremal Combinatorics and Probability Group, Institute for Basic Science, Daejeon, South Korea. Email: \texttt{liumiao10300403@163.com}. Supported by the China Scholarship Council, the National Natural Science Foundation of China under Grant Nos. 12571352 and 12231014, and the Institute for Basic Science under grant IBS-R029-C4.}}
\date{}

\begin{document}
\maketitle

\begin{abstract}
Let $\mathcal{F}\subseteq\binom{[n]}k$, and let $\partial_{k\to\ell}\mathcal{F}$ be the family of all $\ell$-sets containing a member of $\mathcal{F}$. Writing $\mu_k(\mathcal{F})=|\mathcal{F}|/\binom nk$, $p=k/n$, and $q=\ell/n$,
we prove
$$
\mu_\ell(\partial_{k\to\ell}\F)
\geq
\begin{cases}
\displaystyle
\mu_k(\mathcal{F})^{\frac{\log q}{\log p}},
&0\leq \mu_k(\mathcal{F})\leq p^2,\\[2mm]
\displaystyle
q\left[1-\left(1-\frac{\mu_k(\mathcal{F})}{p}\right)^{
\frac{\log(1-q)}{\log(1-p)}}\right],
&p^2\leq \mu_k(\mathcal{F})\leq p,\\[3mm]
\displaystyle
1-(1-\mu_k(\mathcal{F}))^{\frac{\log(1-q)}{\log(1-p)}},
&p\leq \mu_k(\mathcal{F})\leq1.
\end{cases}
$$
This yields a dimension-free closed-form lower bound for the finite Kruskal--Katona profile and each branch is asymptotically sharp.

As the main application, we settle both the uniform and biased product
conjectures of Frankl and Tokushige. If $r\geq2$,
$0\leq k_i\leq(r-1)n/r$, and
$\mathcal{F}_i\subseteq\binom{[n]}{k_i}$ are $r$-cross-intersecting,
then
$$
\prod_{i=1}^r\mu_{k_i}(\mathcal{F}_i)\leq\prod_{i=1}^r\frac{k_i}{n}.
$$
In biased product setting, we prove that for $r$-cross-intersecting families $\mathcal{A}_i\subseteq 2^{[n]}$ and $0\leq p_i\leq(r-1)/r$, $\prod_{i=1}^r\mu_{p_i}(\A_i)\leq\prod_{i=1}^rp_i.$ We further determine all equality cases in both settings.

\medskip
\noindent\textbf{2020 Mathematics Subject Classification.}
Primary 05D05; Secondary 06A07, 06E30.

\medskip
\noindent\textbf{Keywords.}
Kruskal--Katona theorem; isoperimetry; cross-intersecting families; biased measure; Boolean functions.
\end{abstract}

\section{Introduction}

Discrete isoperimetric problems ask how small the boundary of a set can be when its size is prescribed. If $G=(V,E)$ is a finite graph and $X\subseteq V$, its \emph{external vertex boundary} is
$$
\partial_GX=\{v\in V\setminus X:uv\in E\text{ for some }u\in X\}.
$$
The vertex-isoperimetric problem is to minimize $|\partial_GX|$ among sets $X$ of a given size; see~\cite{Harper2004} for background.

Throughout, $n$ is a positive integer, $[n]=\{1,\ldots,n\}$, and
$\binom{[n]}k$ denotes the $k$-th level of the Boolean lattice
$2^{[n]}$. For $0\leq k\leq\ell\leq n$, let $\Gamma_n(k,\ell)$ be the
bipartite graph with vertex classes $\binom{[n]}k$ and $\binom{[n]}\ell$, where $A$ is adjacent to $B$ precisely when $A\subseteq B$; its edges are directed from level $k$ to level $\ell$. For $\F\subseteq\binom{[n]}k$, the resulting directed vertex boundary is the \emph{upper $\ell$-shadow}
$$
\partial_{k\to\ell}\F
=N_{\Gamma_n(k,\ell)}(\F)
=\left\{B\in\binom{[n]}\ell:A\subseteq B
\text{ for some }A\in\F\right\}.
$$
When $k=\ell$, the graph is a perfect matching and $\partial_{k\to k}\F=\F$. Thus the upper-shadow problem is the
directed vertex-isoperimetric problem in $\Gamma_n(k,\ell)$.

For distinct $A,B\in\binom{[n]}k$, write $A<_{\rm lex}B$ if the least
element of $A\mathbin{\triangle}B$ belongs to $A$. A family $\mathcal{L}\subseteq\binom{[n]}k$ is a \emph{lexicographic segment} if
it is an initial segment of this order. For $0\leq m\leq\binom nk$, let $\mathcal L_{n,k}(m)$ denote the unique
lexicographic segment of cardinality $m$. We use the following upper-shadow form of the Kruskal--Katona theorem.

\begin{theorem}[Kruskal~\cite{K1963}--Katona~\cite{K1968}]
Let $0\leq k\leq\ell\leq n$ and
$\F\subseteq\binom{[n]}k$. Then
$$
\bigl|\partial_{k\to\ell}\F\bigr|
\geq\bigl|\partial_{k\to\ell}
\mathcal L_{n,k}(|\F|)\bigr|.
$$
\end{theorem}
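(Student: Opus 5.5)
This is the classical Kruskal--Katona theorem stated for upper shadows. The plan is to reduce it to the usual lower-shadow form by complementation and then check that lexicographic segments correspond to lexicographic segments.

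\emph{Reduction to lower shadows.} Write $\overline{\F}=\{[n]\setminus A:A\in\F\}\subseteq\binom{[n]}{n-k}$. The condition $A\subseteq B$ is equivalent to $[n]\setminus B\subseteq[n]\setminus A$. Hence
\[
\overline{\partial_{k\to\ell}\F}=\partial^{-}_{n-k\to n-\ell}\overline{\F},
\]
where $\partial^-$ denotes the lower $(n-\ell)$-shadow, i.e.\ all $(n-\ell)$-subsets of members. So it suffices to prove the lower-shadow Kruskal--Katona theorem and to identify its extremal families with complements of $\mathcal L_{n,k}(m)$.

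\emph{Lower-shadow theorem.} I would prove the lower-shadow version, for a single step $a\to a-1$ and then iterated, by shifting followed by induction on $n$.

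\begin{enumerate}[label=(\roman*)]
\item \emph{Compression.} The compressions $S_{ij}$ with $i<j$ preserve $|\F|$ and do not increase the lower shadow. This is the standard check that $\partial S_{ij}\F\subseteq S_{ij}\partial\F$. So we may assume $\F$ is left-shifted.
\item \emph{Splitting.} For a shifted family, split $\F=\F_0\cup\{A\cup\{n\}:A\in\F_1\}$, with $n\notin A$ in both parts. Shiftedness gives $\partial\F_1\subseteq\F_0$. Therefore
\[
|\partial\F|\ge|\F_0|+|\partial\F_1|\quad\text{and}\quad |\partial\F|\ge|\partial\F_0|.
\]
\item \emph{Induction.} Apply the induction hypothesis in $[n-1]$ to both inequalities, together with the cascade (Lov\'asz-type) numerical inequality for the function $m\mapsto|\partial\mathcal C(m)|$, where $\mathcal C(m)$ is the colex segment. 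This yields $|\partial\F|\ge|\partial\mathcal C(|\F|)|$.
\end{enumerate}

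\emph{Back to upper shadows.} Finally, take complements. Complementation sends colex order on $(n-k)$-sets, via reversal of the ground set $i\mapsto n+1-i$, to lex order on $k$-sets. Hence the colex initial segment of size $m$ in $\binom{[n]}{n-k}$ complements to $\mathcal L_{n,k}(m)$ up to this relabelling, and relabelling does not change shadow sizes. The same correspondence turns the lower-shadow bound into the claimed inequality
\[
|\partial_{k\to\ell}\F|\ge|\partial_{k\to\ell}\mathcal L_{n,k}(|\F|)|.
\]
Since upper shadows compose, $\partial_{k\to\ell}=\partial_{\ell-1\to\ell}\circ\cdots\circ\partial_{k\to k+1}$, the multi-level statement follows from the one-step one. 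This uses that the shadow of an initial segment is again an initial segment and that the one-step bound is monotone in $m$.

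\emph{Main obstacle.} The expected difficulty is the numerical step in the induction: showing that the combination of $|\F_0|+|\partial\F_1|$ and $|\partial\F_0|$ dominates the extremal value. This requires the case analysis on the $k$-cascade representation of $m$. My first attempt would be Frankl's short argument, which inducts on $m$ and $n$ together and compares $|\F_1|$ with $\binom{x-1}{a-1}$, where $|\F|=\binom{x}{a}$ in the real-valued sense. That avoids the full cascade bookkeeping for the bound itself, though I would still need the cascade form to match the exact lex extremal value.
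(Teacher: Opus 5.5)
Your complementation step is correct, and it is in fact all the paper does with this theorem. The paper gives no proof: it cites Kruskal and Katona and remarks that taking complements and reversing the ground set turns the statement into the lower-shadow theorem with colex extremal families. Your check that $A\mapsto\sigma([n]\setminus A)$, with $\sigma(i)=n+1-i$, carries lex order on $\binom{[n]}{k}$ to colex order on $\binom{[n]}{n-k}$ is fine, and so is the level-by-level iteration at the end. Simply citing the lower-shadow theorem at that point completes the argument.

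\textbf{The gap: your self-contained proof of the lower-shadow theorem does not go through.}

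\emph{The splitting step (ii) is wrong.} Take $\F\subseteq\binom{[n]}{a}$ left-shifted and split at $n$. Then $\partial\F_1$ lives on level $a-2$ and $\F_0$ on level $a$, so $\partial\F_1\subseteq\F_0$ is not the right relation. What left-shiftedness actually gives is $\F_1\subseteq\partial\F_0$ (for $a<n$), and hence $|\partial\F|=|\partial\F_0|+|\partial\F_1|$.

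\emph{Your inequality is false.} The bound $|\partial\F|\geq|\F_0|+|\partial\F_1|$ fails already for $\F=\binom{[5]}{2}$, where it reads $5\geq 6+1$.

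\emph{Even the corrected pair is too weak.} The bounds $|\partial\F|\geq|\F_1|+|\partial\F_1|$ and $|\partial\F|\geq|\partial\F_0|$, combined with induction, give for $a=2$, $|\F|=4$, $|\F_1|=1$ only $\max(2,3)=3$. The colex value is $4$.

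So step (iii), which you leave as the ``main obstacle'', carries the entire proof. Frankl's argument as you describe it gives only the Lov\'asz bound $|\partial\F|\geq\binom{x}{a-1}$, as you note.

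\textbf{The missing idea} is the cascade form of Frankl's key claim, taken at the popular element $1$ rather than at $n$. Write $\F(1)=\{A\setminus\{1\}:1\in A\in\F\}$ and $\F(\bar 1)=\{A\in\F:1\notin A\}$. Shiftedness gives $\partial\F(\bar 1)\subseteq\F(1)$.

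Suppose $|\F|=\sum_{i=s}^{a}\binom{c_i}{i}$ in cascade form, with $c_a>\cdots>c_s\geq s\geq1$. Then $|\F(1)|\geq\sum_i\binom{c_i-1}{i-1}$. Otherwise $|\F(\bar 1)|>\sum_i\binom{c_i-1}{i}$, and induction on the ground set $\{2,\ldots,n\}$ would give $|\F(1)|\geq|\partial\F(\bar 1)|\geq\sum_i\binom{c_i-1}{i-1}$, a contradiction.

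Applying induction to $\F(1)$ then gives $|\partial\F|\geq|\F(1)|+|\partial\F(1)|\geq\sum_i\binom{c_i-1}{i-1}+\sum_i\binom{c_i-1}{i-2}=\sum_i\binom{c_i}{i-1}$. This needs some care for terms with $c_i=i$.

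Without this claim, or an equivalent numerical lemma for your split at $n$, the proposal proves the reduction but not the theorem.
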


The theorem determines the directed vertex-isoperimetric profile of $\Gamma_n(k,\ell)$: a lexicographic segment on the $k$-th level has the smallest possible boundary on the $\ell$-th level. By taking complements and reversing the order of the ground set, this is equivalent to the usual lower-shadow theorem, where the extremal family is an initial segment of the \emph{colexicographic order}, defined by $A<_{\rm colex}B$ when the greatest element of $A\mathbin{\triangle}B$ belongs to $B$. This fundamental result of Kruskal and Katona~\cite{K1968,K1963} characterizes the possible face numbers of simplicial complexes and is used throughout extremal set theory; see~\cite{Bollobas1986,FT2018book}. Its analytic and robust forms also connect expansion on the slice with influences and the analysis of monotone Boolean functions; see, for example,~\cite{FOW2019,OW2009}.

Our first result gives an upper-shadow bound in terms of the density of the family and the ratios $k/n$ and $\ell/n$.

For $\F\subseteq\binom{[n]}k$, write $\mu_k(\F):=|\F|/\binom nk$ for its density on the $k$-th slice. For $0<p\leq q<1$, define $\Theta_{p,q}:[0,1]\to[0,1]$ by
\begin{equation}\label{eq:theta-profile}
\Theta_{p,q}(x)=
\begin{cases}
\displaystyle
x^{\frac{\log q}{\log p}},
&0\leq x\leq p^2,\\[2mm]
\displaystyle
q\left[1-\left(1-\frac{x}{p}\right)^{
\frac{\log(1-q)}{\log(1-p)}}\right],
&p^2\leq x\leq p,\\[3mm]
\displaystyle
1-(1-x)^{\frac{\log(1-q)}{\log(1-p)}},
&p\leq x\leq1.
\end{cases}
\end{equation}
The three formulas agree at $x=p^2$ and $x=p$; hence $\Theta_{p,q}$ is continuous and increasing on $[0,1]$. Here and
throughout, $\log$ denotes the natural logarithm. To include the
endpoint cases, set
\begin{equation}\label{eq:theta-profile-endpoints}
\begin{aligned}
&\Theta_{0,q}(x):=\mathbbm{1}_{\{x>0\}}
&& (0<q\leq1),\\
&\Theta_{p,1}(x):=\mathbbm{1}_{\{x>0\}}
&& (0<p<1),\\
&\Theta_{0,0}(x)=\Theta_{1,1}(x):=x.
\end{aligned}
\end{equation}
The first line is the pointwise limit of \eqref{eq:theta-profile} as
$p\downarrow0$ for fixed $0<q<1$, and the second is the corresponding
limit as $q\uparrow1$ for fixed $0<p<1$. These conventions also give
the correct upper-shadow bound when $k=0$ or $\ell=n$. In particular,
$\Theta_{p,p}(x)=x$ for all $p,x\in[0,1]$.

With this notation, our main upper-shadow estimate is the following.

\begin{theorem}\label{thm:shadow-main}
Let $1\leq k\leq\ell< n$, put $p=k/n$ and $q=\ell/n$, and let
$\F\subseteq\binom{[n]}k$. Then
\begin{equation}\label{ineq:upper-shadow-theta}
\mu_\ell(\partial_{k\to\ell}\F)
\geq\Theta_{p,q}(\mu_k(\F)).
\end{equation}
\end{theorem}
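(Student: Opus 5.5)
The plan is to reduce to lexicographic segments via the Kruskal--Katona theorem and then run an induction on $n$ that peels off the first element of the ground set. By the theorem of Kruskal--Katona, $\mu_\ell(\partial_{k\to\ell}\F)\geq\mu_\ell(\partial_{k\to\ell}\mathcal L_{n,k}(|\F|))$. Since $\Theta_{p,q}$ is increasing, it suffices to prove \eqref{ineq:upper-shadow-theta} when $\F=\mathcal L$ is a lexicographic segment. Lex segments have a clean recursive structure with respect to the element $1$, which I will exploit. The induction is on $n$, for all admissible $(k,\ell)$ simultaneously, and the endpoint conventions \eqref{eq:theta-profile-endpoints} cover the boundary cases $k=0$ (the trivial shadow of $\{\emptyset\}$) and $\ell=n$.

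\emph{Case $x:=\mu_k(\mathcal L)\leq p$.} Here $\mathcal L$ lies inside the star $\{A:1\in A\}$, which has density $p$. So $\mathcal L=\{\{1\}\cup G:G\in\mathcal G\}$ with $\mathcal G$ a lex segment in $\binom{[2,n]}{k-1}$, and $\partial_{k\to\ell}\mathcal L=\{\{1\}\cup B:B\in\partial_{k-1\to\ell-1}\mathcal G\}$. Hence $\mu_k(\mathcal L)=p\,\mu_{k-1}(\mathcal G)$ and $\mu_\ell(\partial\mathcal L)=q\,\mu_{\ell-1}(\partial\mathcal G)$. Writing $p'=\frac{k-1}{n-1}$ and $q'=\frac{\ell-1}{n-1}$, the induction hypothesis reduces this case to the inequality
\begin{equation*}
\Theta_{p,q}(x)\leq q\,\Theta_{p',q'}(x/p)\qquad(0\leq x\leq p).
\end{equation*}

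\emph{Case $x\geq p$.} Here $\mathcal L$ contains the whole star, and its remaining part $\mathcal H$ is a lex segment in $\binom{[2,n]}{k}$. The shadow is the $\ell$-star together with $\partial\mathcal H$. With $p''=\frac{k}{n-1}$ and $q''=\frac{\ell}{n-1}$, we need
\begin{equation*}
\Theta_{p,q}(x)\leq q+(1-q)\,\Theta_{p'',q''}\Bigl(\tfrac{x-p}{1-p}\Bigr)\qquad(p\leq x\leq 1).
\end{equation*}

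The branches of $\Theta_{p,q}$ are visibly designed for this recursion. The third branch, $1-(1-x)^\beta$ with $\beta=\frac{\log(1-q)}{\log(1-p)}$, is exactly the fixed shape of the second recursion. The middle branch is $q$ times the third branch evaluated at $x/p$, matching the first recursion when $x/p\geq p'$. The power branch $x^{\log q/\log p}$ is self-similar under $x\mapsto x/p$, $y\mapsto y/q$, matching the first recursion when $x/p$ is again small.

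The main obstacle is that the parameters shift from $(p,q)$ to $(p',q')$ or $(p'',q'')$, and the breakpoints also shift: $p^2$ versus $pp'$. So the two displayed inequalities require monotonicity of the profile in its parameters. Concretely, I need exponent comparisons such as
\begin{equation*}
\frac{\log(1-q)}{\log(1-p)}\leq\frac{\log(1-q'')}{\log(1-p'')},\qquad \frac{\log q}{\log p}\leq\frac{\log q'}{\log p'},\qquad q\,(p')^{\log q'/\log p'}\cdot p^{-\log q/\log p}\gtrsim 1\text{-type bounds}.
\end{equation*}
I also have to handle the pieces of $[0,p]$ where $x/p$ falls in a different branch for $(p',q')$ than $x$ does for $(p,q)$. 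I would attack these by elementary convexity of $t\mapsto\log(1-t)$ and $t\mapsto\log t$, using $\frac{1-q}{1-p}=\frac{n-\ell}{n-k}$ against $\frac{n-1-\ell}{n-1-k}$ and similar ratios. For the mismatched-branch regions, I would first try to show that each branch formula is a lower bound for $\Theta$ on the neighbouring interval as well, i.e.\ that $\Theta_{p,q}$ is the pointwise maximum of suitable branch functions. That would allow the comparison to be made one branch at a time.
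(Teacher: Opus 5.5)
Your route is correct and genuinely different from the paper's. The paper's proof of this theorem never uses Kruskal--Katona. It inducts on $n$ by choosing, via \cref{lem:balanced-restriction}, a coordinate with $\mu_k(\F_0(i))\le\mu_{k-1}(\F_1(i))$. It must then prove the two-point inequality \eqref{ineq:slice-two-point} for \emph{all} $0\le x_0\le x_1\le1$, which takes a case analysis using \cref{lem:two-point-powers} and a convexity argument.

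By compressing to a lexicographic segment first, you only need the two boundary cases: $x_0=0$ (your first inequality) and $x_1=1$ (your second). These are much easier. Write $\alpha=\log q/\log p$, $\beta=\log(1-q)/\log(1-p)$, and use the analogous primed exponents. In the first inequality put $y=x/p$:
\begin{itemize}
\item[] for $y\le p$ the left side is $qy^{\alpha}$, and it suffices that $\Theta_{p',q'}(y)\ge y^{\alpha'}\ge y^{\alpha}$;
\item[] for $y\ge p$ the left side is $q\bigl(1-(1-y)^{\beta}\bigr)$, and it suffices that $\Theta_{p',q'}(y)\ge 1-(1-y)^{\beta'}\ge 1-(1-y)^{\beta}$.
\end{itemize}
In the second inequality, $\Theta_{p,q}(x)=1-(1-q)(1-z)^{\beta}$ with $z=(x-p)/(1-p)$, and it suffices that $\Theta_{p'',q''}(z)\ge 1-(1-z)^{\beta''}\ge 1-(1-z)^{\beta}$.

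So the shifted breakpoints need no separate treatment. The only inputs are:
\begin{itemize}
\item[] \cref{lem:theta-comparison}(i),(iii), which is your pointwise-maximum idea;
\item[] the exponent comparisons $\alpha'\le\alpha$, $\beta'\ge\beta$, $\beta''\ge\beta$ of \cref{lem:uniform-exponent-comparison}.
\end{itemize}
The real content of the first input is the crossover $1-(1-t)^{\beta}\le t^{\alpha}$ exactly for $t\le p$. This is the one substantive analytic step your proposal leaves unproved; the paper gets it from the monotonicity of $\log(1-t^{\alpha})/\log(1-t)$. The endpoint cases $k=1$ ($p'=0$) and $\ell=n-1$ ($q''=1$) follow at once from the conventions \eqref{eq:theta-profile-endpoints}.

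Two corrections to your list of comparisons. First, $\log q/\log p\le\log q'/\log p'$ is backwards and false in general: for $(n,k,\ell)=(10,2,5)$ one has $\alpha\approx0.431$ but $\alpha'\approx0.369$. What you need, since $y\le 1$, is $\alpha'\le\alpha$, and that is true. Second, your third ``$\gtrsim1$'' item is not needed, and in fact $q(p')^{\alpha'}p^{-\alpha}=q'<1$.

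As for what each route buys: yours is shorter but rests on Kruskal--Katona as a black box. The paper's argument is self-contained on the slice. Moreover, the equality clause of \cref{lem:slice-two-point} at $x=p$ yields \cref{cor:shadow-at-star} (the full $1$-star is the only equality case), which the paper later needs for the Frankl--Tokushige equality classification. Your approach would need additional input for that, such as the equality cases of Kruskal--Katona at size $\binom{n-1}{k-1}$.
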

The bound is asymptotically sharp on each branch. For each fixed
integer $t\geq2$, the family
$\{A\in\binom{[n]}k:[t]\subseteq A\}$ asymptotically attains the first
branch. For each fixed integer $t\geq1$, the families
$\{A\in\binom{[n]}k:1\in A,\ A\cap\{2,\ldots,t+1\}\neq\varnothing\}$
and $\{A\in\binom{[n]}k:A\cap[t]\neq\varnothing\}$ asymptotically
attain the middle and third branches, respectively; see
\cref{prop:sharpness-families}. The Kruskal--Katona theorem determines
the exact finite profile, whereas \cref{thm:shadow-main} gives a  
closed-form lower envelope depending only on $p$, $q$, and the density;
see~\cref{fig:kk-theta}.

\begin{remark}\label{rem:role-of-shadow-bound}
The main motivation for~\cref{thm:shadow-main} was the
Frankl--Tokushige product conjectures; see~\cref{sec:uniform-proof}. Although the Kruskal--Katona theorem gives the exact finite shadow profile, its binomial-expansion form is not well suited to the unequal-level coupling arising here. Even after the critical-sum reduction, using the exact profile leads to a discrete optimization
depending on the finite parameters and on the binomial representation
of the family size. By contrast, \cref{thm:shadow-main} reduces the product problem to the analytic inequality in~\cref{thm:product-inequality}. The middle branch is essential: it gives
the extra control near the $1$-star density that is not supplied by
the power bound alone; see~\cref{rem:branch-roles}. The resulting estimate is also dimension-free: after writing $p=k/n$ and
$q=\ell/n$, the right-hand side of~\eqref{ineq:upper-shadow-theta} depends only on $p$, $q$, and $\mu_k(\F)$. Finally, our approach proceeds directly on the slice,
whereas Ellis, Keller and Lifshitz~\cite{EKL2019JEMS} first obtain isoperimetric stability results in the $p$-biased cube and then
transfer them to uniform families. Here the biased comparison in~\cref{thm:biased-shadow-comparison} is instead recovered afterwards by
passing to a product-measure limit.
\end{remark}

The functions also satisfy, for $0<p\leq q\leq r<1$,
\begin{equation}\label{eq:theta-composition}
\Theta_{q,r}\bigl(\Theta_{p,q}(x)\bigr)=\Theta_{p,r}(x).
\end{equation}
Since
$\partial_{\ell\to h}(\partial_{k\to\ell}\F)=\partial_{k\to h}\F$,
successive applications of \eqref{ineq:upper-shadow-theta} for
$(k,\ell)$ and $(\ell,h)$ recover the estimate for $(k,h)$. The
identity \eqref{eq:theta-composition} is proved in
\cref{sec:profile-geometry}.

\begin{figure}[t]
\centering
\includegraphics[width=.86\textwidth]{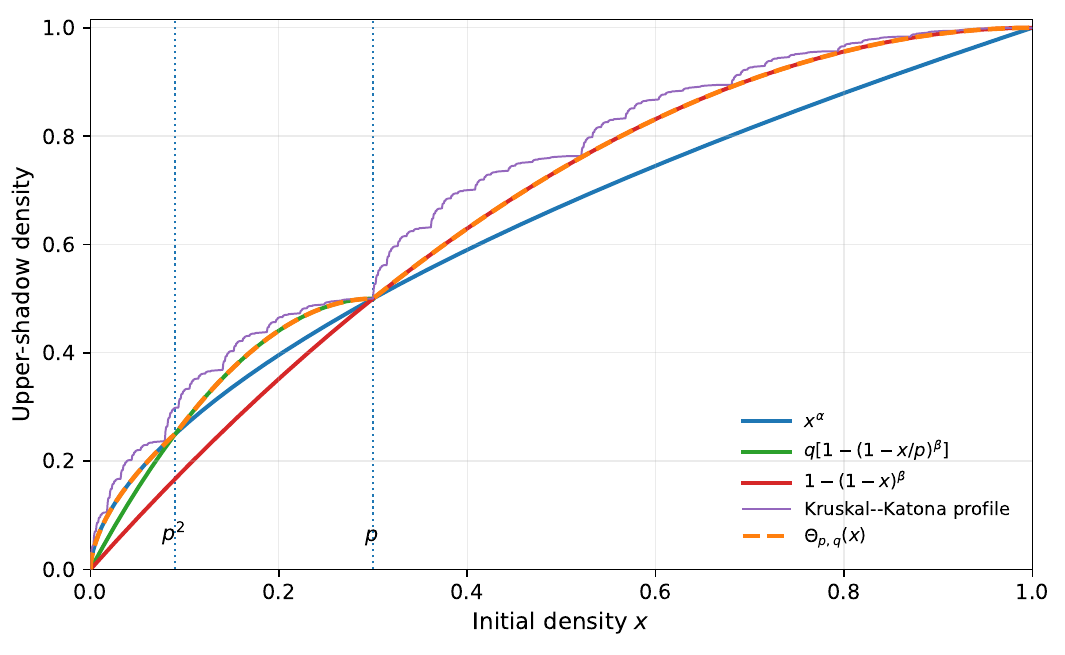}
\caption{The three branch functions and the resulting profile
$\Theta_{p,q}$, together with the exact Kruskal--Katona profile, for
$(n,k,\ell)=(20,6,10)$, where $p=3/10$ and $q=1/2$. The thin jagged
curve joins the exact values at $x=m/\binom{n}{k}$, while the thick
dashed curve is $\Theta_{p,q}$; the dotted vertical lines mark the
breakpoints $p^2$ and $p$.}
\label{fig:kk-theta}
\end{figure}

We shall use the following consequence of \cref{thm:shadow-main} in
both applications.

\begin{corollary}\label{cor:shadow-at-star}
Under the assumptions of \cref{thm:shadow-main},
\begin{equation}\label{eq:uniform-power-comparison}
\mu_\ell(\partial_{k\to\ell}\F)
\geq \mu_k(\F)^{\frac{\log q}{\log p}}.
\end{equation}
If $k<\ell$ and $\mu_k(\F)=p$, then equality holds in
\eqref{eq:uniform-power-comparison} if and only if
$\F=\{A\in\binom{[n]}k:i\in A\}$ for some $i\in[n]$.
\end{corollary}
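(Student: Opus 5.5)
The plan is to deduce the inequality \eqref{eq:uniform-power-comparison} from \cref{thm:shadow-main} by showing that $\Theta_{p,q}(x)\geq x^{\alpha}$ for all $x\in[0,1]$, where $\alpha=\log q/\log p\in(0,1]$. The equality case is then a separate argument at the single density $x=p$.

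For the inequality, the first branch is trivial: there $\Theta_{p,q}(x)=x^\alpha$. On the third branch, set $\beta=\log(1-q)/\log(1-p)\geq1$. We must show $1-(1-x)^\beta\geq x^\alpha$ for $p\leq x\leq1$. At $x=p$ both sides equal $q$, since $(1-p)^\beta=1-q$ and $p^\alpha=q$, and at $x=1$ both sides equal $1$. The function $1-(1-x)^\beta$ is concave and $x^\alpha$ is concave too, so a direct concavity argument does not apply. Instead I would substitute $u=\log(1-x)$ and compare $\log(1-x^\alpha)$ with $\beta\log(1-x)$. Equivalently, I would show that
\[
g(x)=\frac{\log(1-x^\alpha)}{\log(1-x)}
\]
is nondecreasing on $[p,1)$. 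Since $g(p)=\beta$, this gives $\log(1-x^\alpha)\geq\beta\log(1-x)$, which is the claim. Monotonicity of such ratios follows by the standard argument: writing $x=p^s$ so that $x^\alpha=q^s$, the map $s\mapsto \log(1-q^s)/\log(1-p^s)$ is monotone for $q\geq p$, which one checks by differentiation or by a log-convexity argument.

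The middle branch $p^2\leq x\leq p$ is handled the same way, since $\Theta_{p,q}(x)=q\,[1-(1-x/p)^\beta]$. With $y=x/p\in[p,1]$ we have $x^\alpha=q\,y^\alpha$, so the needed inequality reduces to $1-(1-y)^\beta\geq y^\alpha$ on $[p,1]$, which is exactly the third-branch inequality. Hence the whole inequality reduces to the one-variable third-branch inequality, and I expect that to be the main calculus obstacle.

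For the equality case, take $k<\ell$ and $\mu_k(\F)=p$. For the ``if'' direction, the star $\{A:i\in A\}$ has upper shadow equal to the star on level $\ell$, whose density is $q=p^\alpha$, so equality holds. For the ``only if'' direction, suppose equality holds. Then $\mu_\ell(\partial_{k\to\ell}\F)=q$. I would invoke the Kruskal--Katona theorem: the lex segment of size $\binom{n-1}{k-1}$ is exactly a star, and its shadow has size $\binom{n-1}{\ell-1}$. For uniqueness I would use the standard characterization of the extremal families at this size, namely Füredi--Griggs / Mörs type uniqueness for sizes $\binom{n-1}{k-1}$ with $k<\ell$. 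If that is not available, the direct route is this. Apply Kruskal--Katona one level at a time, from $k$ to $k+1$, using $\Theta\geq$ the power bound and the composition identity \eqref{eq:theta-composition}; this forces equality at each step. At a single step $k\to k+1$ with shadow exactly $\binom{n-1}{k}$, Lovász-type strict inequality shows that $\F$ must be a star unless $\F$ is the whole level, which is excluded since $\mu_k(\F)=p<1$.
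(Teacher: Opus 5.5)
Your inequality argument is the paper's own: \cref{thm:shadow-main} combined with \cref{lem:theta-comparison}(i). That proof also treats the first branch as trivial and reduces both other branches (via $y=x/p$ for the middle one) to $1-(1-y)^\beta\geq y^\alpha$ on $[p,1]$, so you could simply cite that lemma.

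However, you have the key monotonicity backwards. Since $\log(1-x)<0$, the target $\log(1-x^\alpha)\geq\beta\log(1-x)$ is equivalent to $g(x)\leq\beta=g(p)$. So you need $g$ to be \emph{nonincreasing} on $[p,1)$. If $g$ were nondecreasing, you would get the reverse inequality $1-(1-x)^\beta\leq x^\alpha$. In fact, for $p<q$ your claim is false: $g(p)=\beta>1$ while $g(x)\to1$ as $x\uparrow1$.

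This monotonicity is also the only genuine calculus in the inequality part, and ``by differentiation or a log-convexity argument'' does not establish it. The paper proves it with the monotone l'H\^opital rule. Put $f(t)=-\log(1-t^\alpha)$ and $h(t)=-\log(1-t)$. Then $f'/h'=\alpha(1-t)/(t^{1-\alpha}-t)$, and its derivative has the sign of $1-t^{-\alpha}\bigl((1-\alpha)+\alpha t\bigr)$. This is $\leq0$ by weighted AM--GM, $t^\alpha\leq(1-\alpha)+\alpha t$. Since $f(0)=h(0)=0$, the ratio $g=f/h$ is nonincreasing.

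For the equality case you take a genuinely different route, and your ``direct route'' is valid.

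The paper stays inside its own induction. At density $p$ one has $p^\alpha=q=\Theta_{p,q}(p)$, so equality in the power bound forces equality in the inductive chain. The equality clause of \cref{lem:slice-two-point} at $(1-p)x_0+px_1=p$ then gives $\mu_k(\F_0)=0$ and $\mu_{k-1}(\F_1)=1$ at the chosen coordinate, so $\F$ is the star.

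Your route goes as follows. Set $p'=(k+1)/n<1$ and $\mathcal G=\partial_{k\to k+1}\F$. When $k+1<\ell$, the already-proved power bound gives $q\geq\mu_{k+1}(\mathcal G)^{\log q/\log p'}\geq (p')^{\log q/\log p'}=q$. Strict monotonicity of the power then forces $|\mathcal G|=\binom{n-1}{k}$. Passing to complements, $\overline{\F}\subseteq\binom{[n]}{n-k}$ has size $\binom{n-1}{n-k}$ and lower shadow of size $\binom{n-1}{n-k-1}$. The uniqueness case of Lov\'asz's form of Kruskal--Katona, which you cite, gives $\overline{\F}=\binom{[n]\setminus\{i\}}{n-k}$. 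Your proviso ``unless $\F$ is the whole level'' is vacuous here.

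Your route is short, but it depends on an external uniqueness theorem for extremal shadows. The paper's characterization is self-contained: it falls out of the equality clause deliberately built into the two-point lemma.
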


\begin{remark}
Inequality~\eqref{eq:uniform-power-comparison} is sufficient to recover
both the Erd\H{o}s--Ko--Rado theorem~\cite{EKR1961} and the equal-level product theorem of Frankl and Tokushige~\cite{FT2011} for $r$-cross-intersecting $k$-uniform families. For the former, take $\ell=n-k$ and compare the upper shadow with the family of complements; see~\cref{cor:ekr-from-shadow}. For the latter, combine~\eqref{eq:uniform-power-comparison},~\cref{lem:uniform-critical-sum}, and AM--GM to obtain
$$
\prod_{i=1}^r\mu_k(\F_i)
\leq \left(\frac{k}{n}\right)^r
$$
whenever $\F_1,\ldots,\F_r\subseteq\binom{[n]}k$ are $r$-cross-intersecting and $rk\leq(r-1)n$; see~\cref{thm:equal-level-theorem}.

The exponent in \eqref{eq:uniform-power-comparison} also has the
following geometric interpretation. For $0<x\leq1$, the ratio $\log x/\log p$ is the unique real number $d$ such that $x=p^d$. For fixed $t$, the $t$-star $\{A\in\binom{[n]}k:[t]\subseteq A\}$ has density $\binom{n-t}{k-t}/\binom nk\to p^t$ as $n\to\infty$ with $k/n\to p$. Thus $d=t$ at the limiting density $p^t$. We call $\log x/\log p$ the \emph{effective codimension} of the density $x$ at $p$. If $\mu_k(\F)=p^d$, then~\eqref{eq:uniform-power-comparison} gives $\mu_\ell(\partial_{k\to\ell}\F)\geq q^d$; equivalently, for $\F\neq\varnothing$,
$$
\frac{\log\mu_\ell(\partial_{k\to\ell}\F)}{\log q}\leq\frac{\log\mu_k(\F)}{\log p}.
$$
Hence taking an upper shadow cannot increase effective codimension.
The same logarithmic normalization appears in the biased-measure
comparison of Ellis, Keller and Lifshitz~\cite[Lemma~2.6]{EKL2019JEMS}.
\end{remark}

For $0\leq p\leq1$, let $\mu_p$ be the product measure under which each element of $[n]$ is included independently with probability
$p$. For $\A\subseteq2^{[n]}$, write
$$
\mu_p(\A)=\sum_{A\in\A}p^{|A|}(1-p)^{n-|A|}.
$$
We refer to $(2^{[n]},\mu_p)$ as the $p$-biased cube. Taking the product-measure limit in~\cref{thm:shadow-main} gives the following theorem.

\begin{theorem}\label{thm:biased-shadow-comparison}
Let $\A\subseteq2^{[n]}$ be increasing and let $0\leq p\leq q\leq1$. Then
\begin{equation}\label{ineq:biased-theta}
\mu_q(\A)\geq\Theta_{p,q}(\mu_p(\A)).
\end{equation}
If $0<p<q<1$, then
\begin{equation}\label{eq:biased-power-comparison}
\mu_q(\A)\geq
\mu_p(\A)^{\frac{\log q}{\log p}}.
\end{equation}
For nonempty $\A$, equality in \eqref{eq:biased-power-comparison}
holds if and only if
$\A=\{A\subseteq[n]:S\subseteq A\}$ for some $S\subseteq[n]$.
\end{theorem}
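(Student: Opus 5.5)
The plan is to deduce \eqref{ineq:biased-theta} from \cref{thm:shadow-main} by a product-measure limit, and then to obtain the equality statement separately.

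\textbf{Reduction to the slice.} Fix an increasing $\A\subseteq2^{[n]}$ and $0<p<q<1$; the endpoint cases follow directly from the conventions \eqref{eq:theta-profile-endpoints}. Embed $[n]\subseteq[N]$ with $N\to\infty$, and choose $k=\lfloor pN\rfloor$ and $\ell=\lfloor qN\rfloor$. Put
\[
\F_N=\Bigl\{B\in\tbinom{[N]}{k}:B\cap[n]\in\A\Bigr\}.
\]
Because $\A$ is increasing, $\partial_{k\to\ell}\F_N$ is contained in the family $\{C\in\binom{[N]}{\ell}:C\cap[n]\in\A\}$. A uniformly random $k$-subset of $[N]$ meets $[n]$ in a set whose distribution tends to $\mu_{k/N}$, hence to $\mu_p$, since $n$ is fixed; this is the hypergeometric-to-binomial limit. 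It follows that $\mu_k(\F_N)\to\mu_p(\A)$ and $\mu_\ell(\{C:C\cap[n]\in\A\})\to\mu_q(\A)$. Applying \cref{thm:shadow-main} with $(k/N,\ell/N)\to(p,q)$ gives
\[
\mu_q(\A)\geq\lim\Theta_{k/N,\ell/N}(\mu_k(\F_N)).
\]
The limit equals $\Theta_{p,q}(\mu_p(\A))$ because $\Theta$ is jointly continuous in $(p,q,x)$ for $0<p\le q<1$. This continuity follows from the explicit formulas and from continuity at the breakpoints. The power bound \eqref{eq:biased-power-comparison} then follows, since $\Theta_{p,q}(x)\ge x^{\log q/\log p}$. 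This inequality is the first part of \cref{cor:shadow-at-star}, applied in the limit, or can be checked directly on each branch.

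\textbf{Equality.} For the ``if'' direction, $\mu_p$ of the up-set generated by $S$ equals $p^{|S|}$, and $q^{|S|}=(p^{|S|})^{\log q/\log p}$.

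For the ``only if'' direction, limits do not preserve strictness, so I would argue directly in the cube by induction on $n$. Write $\A_0=\{A:n\notin A,\ A\in\A\}$ and $\A_1=\{A\setminus\{n\}:n\in A\in\A\}$ as families on $[n-1]$. Both are increasing, and $\A_0\subseteq\A_1$. Then
\[
\mu_p(\A)=(1-p)\,\mu_p(\A_0)+p\,\mu_p(\A_1),
\]
and similarly for $\mu_q$. Set $c=\log q/\log p\in(0,1)$ and $x_i=\mu_p(\A_i)$, so by induction $\mu_q(\A_i)\ge x_i^c$. It then suffices to prove the two-point inequality
\[
(1-q)\,x_0^{c}+q\,x_1^{c}\;\ge\;\bigl((1-p)x_0+p\,x_1\bigr)^{c}\qquad\text{for }0\le x_0\le x_1\le1,
\]
and to determine its equality cases.

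\textbf{Main obstacle.} I expect this two-point inequality to be the hardest step. My first approach is to normalise $x_1=1$ by homogeneity, so that with $t=x_0\in[0,1]$ the claim becomes $g(t)=(1-q)t^c+q-((1-p)t+p)^c\ge0$. We have $g(0)=q-p^c=0$ and $g(1)=0$. Since $t^c$ is concave, I would show that $g$ has the sign pattern forced by a single interior critical point, or alternatively compare derivatives at the endpoints: $g'(0^+)=+\infty$, and $g'(1)\le0$ reduces to $(1-q)c\le c(1-p)$, which holds. The aim is then to show that equality occurs only at $t\in\{0,1\}$, perhaps after substituting $t=s^{1/c}$ to control convexity.

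Granting this, equality forces either $x_0=0$ or $x_0=x_1$, together with equality for the relevant $\A_i$.
\begin{itemize}
\item If $x_0=0$ then $\A_0=\varnothing$, so $\A$ is the up-set generated by $S'\cup\{n\}$, where $S'$ generates $\A_1$.
\item If $x_0=x_1$ then $\A_0=\A_1$, which means $n$ is irrelevant and $\A$ is the up-set generated by $S'$.
\end{itemize}
The base case $n=0$ is trivial. This argument also gives \eqref{eq:biased-power-comparison} independently of the limit.
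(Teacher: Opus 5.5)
Your proposal is correct. For \eqref{ineq:biased-theta} it is the paper's argument: lift to slices of $[N]$, apply \cref{thm:shadow-main}, and pass to the limit using joint continuity of $\Theta$. Using only the inclusion $\partial_{k\to\ell}\F_N\subseteq\{C:C\cap[n]\in\A\}$ instead of the identity \eqref{eq:lifted-shadow-identity} suffices, since that is the direction needed, and it removes the requirement $K_N\geq m$. The case $p=q$, which you did not list, is trivial because $\Theta_{p,p}(x)=x$.

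The genuine difference is the equality statement. The paper does not prove it here but cites \cite[Lemma~3.2]{CLL2026}, whereas you give a self-contained coordinate induction in the cube. The two-point inequality you single out as the main obstacle is exactly \cref{lem:two-point-powers}(ii) with $t=p$, $z=x_0/x_1$ and exponent $c$. The same estimate is used in Case~1 of the proof of \cref{lem:slice-two-point}.

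Strictness for $0<x_0<x_1$ holds for the following reason. $g'(t)=0$ if and only if $\bigl((1-p)+p/t\bigr)^{1-c}=(1-p)/(1-q)$, and the left side is strictly decreasing in $t$. So $g$ has a unique critical point and is positive on $(0,1)$. The endpoint signs of $g'$ alone would not give this; the uniqueness of the critical point is what closes the step.

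Your equality bookkeeping then goes through:
\begin{itemize}
\item $x_1>0$, since a nonempty increasing family contains $[n]$.
\item The weights $1-q$ and $q$ are positive, so equality forces $\mu_q(\A_i)=x_i^c$ for both $i$.
\item $x_0=x_1$ together with $\A_0\subseteq\A_1$ gives $\A_0=\A_1$, because $\mu_p$ charges every set when $0<p<1$.
\end{itemize}

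Your route gives \eqref{eq:biased-power-comparison} and its equality case independently of both the slice theorem and the external reference. The paper's limit argument instead yields the full three-branch bound at no extra cost, but, as you note, it cannot by itself detect equality.
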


The three branches of~\eqref{ineq:biased-theta} are attained by the following increasing families. For $2\leq t\leq n$, the family $\{A\subseteq[n]:[t]\subseteq A\}$ attains equality in the first branch. For $1\leq t\leq n-1$, the
family $\{A\subseteq[n]:1\in A,\ A\cap\{2,\ldots,t+1\}\neq\varnothing\}$
attains equality in the middle branch, while for $1\leq t\leq n$ the
family $\{A\subseteq[n]:A\cap[t]\neq\varnothing\}$ attains equality in the third branch.

\begin{remark}
Inequality~\eqref{eq:biased-power-comparison} recovers~\cite[Lemma~2.6]{EKL2019JEMS} and~\cite[Lemma~3.2]{CLL2026}. The third branch of \eqref{ineq:biased-theta} follows from
\cite[Lemma~2.7(ii)]{EKL2019JEMS} by contraposition and continuity, and
the middle branch follows in the same way from~\cite[Lemma~3.9]{EKL2019JEMS} with $t=1$. Ellis, Keller and Lifshitz
prove \eqref{eq:biased-power-comparison} by differentiating
$\log_p\mu_p(\A)$ and applying the Margulis--Russo formula~\cite{Margulis1974,Russo1982} together with the biased
edge-isoperimetric inequality. Their change-of-bias estimates are a
key step in the stability results of~\cite{EKL2019JEMS}.
\end{remark}

\subsection{The Frankl--Tokushige conjectures}

The Erd\H{o}s--Ko--Rado theorem~\cite{EKR1961} is the basic example of an intersection condition with a star extremizer: if $n\geq2k$
and $\F\subseteq\binom{[n]}k$ is intersecting, then $|\F|\leq\binom{n-1}{k-1}$, with equality attained by the family of all $k$-sets containing a fixed element. This principle has generated a broad theory of intersection problems; see, for example,~\cite{AK1997,BZ2023,DF1977,EFF2012,EFP11,EKL2023,FF1991,FT2016,FT2018book,F2008,KLMS2024,KZ2022,Wilson84}.

For several cross-intersecting families, the sizes can trade off
against one another, so no bound on an individual family captures the
extremal problem. The product of the normalized sizes treats the
families symmetrically, allows different uniformities or biases, and
agrees with the corresponding product for a common star. Frankl and
Tokushige asked whether the common-star construction remains extremal
whenever every choice of one set from each family has nonempty common
intersection.

Let $r\geq2$. Families $\F_1\subseteq\binom{[n]}{k_1},\ldots,\F_r\subseteq\binom{[n]}{k_r}$ are called $r$-cross-intersecting if $A_1\cap\cdots\cap A_r\neq\varnothing$ whenever $A_i\in\F_i$ for every $i\in[r]$. For $\F\subseteq\binom{[n]}k$, recall $\mu_k(\F)=|\F|/{\binom nk}$. If the families $\F_i$ are the corresponding levels of the same
$1$-star, then $\mu_{k_i}(\F_i)=\frac{k_i}{n}$ for every $i\in[r]$, and hence
$$\prod_{i=1}^r\mu_{k_i}(\F_i)=
\prod_{i=1}^r\frac{k_i}{n}.$$
Frankl and Tokushige~\cite[Conjecture~12.12]{FT2018book} conjectured
that this construction always maximizes the product whenever
$k_i\leq(r-1)n/r$. The equal-level case was proved in~\cite{FT2011}. The two-family case and several weighted or restricted cases were obtained in~\cite{Borg2016,MT1989,STT2017}; related sharp sum problems were
studied in~\cite{GMPS2023}. The unequal-level two-family theorem of Matsumoto and Tokushige was also used by Gromov in his homological isoperimetry for the torus, through the exterior-algebra description of its cohomology~\cite[p.~508]{Gromov2010}.

In our previous paper~\cite{CLL2026}, we studied the corresponding
problem for $p$-biased measures. The proof combined a coupling argument,
\eqref{eq:biased-power-comparison}, and the AM--GM inequality.

For unequal levels or unequal biases, there is no distinguished common
choice of the new levels. The coupling gives an additive inequality for
every vector $(q_1,\ldots,q_r)$ with $\sum_iq_i=r-1$, but does not
select the vector that yields the sharp product bound. Here we use a
similar coupling, replacing \eqref{eq:uniform-power-comparison} by the
stronger comparison in \cref{thm:shadow-main}; this settles both
conjectures.

\begin{theorem}\label{thm:uniform-main}
Let $n\geq1$ and $r\geq2$, let
$0\leq k_i\leq(r-1)n/r$ for every $i\in[r]$, and let $\F_i\subseteq\binom{[n]}{k_i}$ be $r$-cross-intersecting. Then
\begin{equation}\label{ineq:uniform-main}
\prod_{i=1}^r\mu_{k_i}(\F_i)\leq
\prod_{i=1}^r\frac{k_i}{n}.
\end{equation}
The equality cases are as follows.
\begin{itemize}
    \item If $k_i=0$ for some $i\in[r]$, equality holds for all such families.
    \item Suppose that $k_i>0$ for every $i\in[r]$, and either
    $r\geq3$, or $r=2$ and $k_1+k_2<n$. Then equality holds if and
    only if the families are full $1$-stars with a common center.
    \item If $r=2$ and $k_1+k_2=n$, then $k_1=k_2=\frac n2$. Equality holds if and only if there exists $\F\subseteq\binom{[n]}{n/2}$ with $|\F|=\frac12\binom{n}{n/2}$ such that
    $$\F_1=\F \quad\text{and}\quad \F_2=\left\{B\in\binom{[n]}{n/2}:[n]\setminus B\notin\F\right\}.$$
\end{itemize}
\end{theorem}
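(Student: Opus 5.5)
The plan is to couple the families through upper shadows up to a "critical" level vector, and then to apply \cref{thm:shadow-main} together with the analytic inequality of \cref{thm:product-inequality}. First dispose of the degenerate cases. If some $k_i=0$, the right-hand side of \eqref{ineq:uniform-main} is $0$. In that case $\F_i\subseteq\{\varnothing\}$, and $\F_i=\{\varnothing\}$ would force some other family to be empty (or, for $r$-cross-intersection, force the product to vanish). So the left-hand side is also $0$, and every configuration is an equality case. We may likewise assume that every $\F_i$ is nonempty. Hence from now on $1\le k_i\le (r-1)n/r$, so that $p_i:=k_i/n\in(0,(r-1)/r]$ and $\sum_i k_i\le (r-1)n$.

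\textbf{Step 1: the coupling.} Choose integers $\ell_i$ with $k_i\le \ell_i\le n$ and $\sum_i\ell_i=(r-1)n$; this is possible because $\sum_i k_i\le(r-1)n\le rn$. The upper shadows $\mathcal G_i:=\partial_{k_i\to\ell_i}\F_i$ are again $r$-cross-intersecting, since enlarging sets preserves nonempty common intersection. By the critical-sum lemma (\cref{lem:uniform-critical-sum}) we then get $\sum_i\mu_{\ell_i}(\mathcal G_i)\le r-1$. The idea behind that lemma is to take a uniformly random ordered partition of $[n]$ into blocks of sizes $n-\ell_1,\dots,n-\ell_r$. The complements $B_i$ of these blocks satisfy $B_1\cap\cdots\cap B_r=\varnothing$, so at most $r-1$ of the events $B_i\in\mathcal G_i$ can hold simultaneously, and taking expectations gives the bound. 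Combining this with \cref{thm:shadow-main}, applied with $q_i=\ell_i/n$, yields
$$\sum_{i=1}^r\Theta_{p_i,q_i}\bigl(\mu_{k_i}(\F_i)\bigr)\le r-1 .$$
Whenever $\ell_i=n$ the convention $\Theta_{p,1}=\mathbbm 1_{\{x>0\}}$ applies, and it gives the correct contribution of $1$ for a nonempty family.

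\textbf{Step 2: the analytic inequality.} It then remains to show the following. For $x_i\in[0,1]$, if the displayed inequality holds for a suitable admissible vector $(q_i)$, then $\prod_i x_i\le\prod_i p_i$. This is where the choice of $(\ell_i)$ matters, and I expect it to be the main obstacle. The power branch alone (\cref{cor:shadow-at-star}) combined with AM--GM only works for equal levels, because for unequal $p_i$ no single vector $q$ converts $\sum_i x_i^{\log q_i/\log p_i}$ into the product. The natural choice is to make all $q_i$ "equally far" from $p_i$ in the effective-codimension scale. I would then invoke \cref{thm:product-inequality}, which I expect asserts exactly that $\sum_i\Theta_{p_i,q_i}(x_i)\le r-1$ forces $\prod_i x_i\le\prod_i p_i$ for the relevant vectors $q$. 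Two points need care. The middle branch of $\Theta$ is needed when some $x_i$ exceeds $p_i$ while others are small, and one must check that an integer vector $(\ell_i)$ with $\sum_i\ell_i=(r-1)n$ lies in the admissible region. If the theorem is stated for real $q_i$, the fallback is to argue by contradiction. Assuming $\prod_i x_i>\prod_i p_i$, the strict inequality should persist for an open set of $q$ vectors on the hyperplane $\sum_i q_i=r-1$, and one then has to argue that this open set contains a lattice point. This lattice-point question is the delicate part.

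\textbf{Step 3: equality cases.} In the generic case (either $r\ge3$, or $r=2$ and $k_1+k_2<n$) equality must propagate through every step. Each $x_i$ must sit at the star density $p_i$, and equality must hold in \cref{thm:shadow-main} for some $i$ with $\ell_i>k_i$; such an $i$ exists because the sum is strict or $r\ge3$. By the equality statement of \cref{cor:shadow-at-star}, that $\F_i$ is a full $1$-star centered at some element $j$. The cross-intersection condition then forces the maximal families $\F_{i'}$ for $i'\ne i$ to be stars with the same center $j$, for instance by repeating the argument with the roles of the indices permuted. Finally, when $r=2$ and $k_1+k_2=n$, the hypothesis $k_i\le n/2$ forces $k_1=k_2=n/2$. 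Then $\ell_i=k_i$ and the coupling is trivial: the condition becomes $[n]\setminus B\notin\F_1$ for every $B\in\F_2$. Equality therefore means $\mu(\F_1)+\mu(\F_2)=1$ with product $1/4$, which forces $|\F_1|=\frac12\binom{n}{n/2}$ and $\F_2$ to be the complement-dual family. This is exactly the listed description.
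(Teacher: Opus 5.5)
Your Step 1 is fine, but Step 2, which you yourself identify as the main obstacle, is where the proof actually lives, and you do not supply it.

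\cref{thm:product-inequality} does not say that $\sum_i\Theta_{p_i,q_i}(x_i)\le r-1$ forces $\prod_ix_i\le\prod_ip_i$ for some class of admissible vectors, such as equal effective codimension. It is tailored to one specific, family-dependent choice of levels, and that choice is the missing idea:
\begin{itemize}
\item Put $a_i=x_i/p_i$ and relabel so that $a_1\ge\cdots\ge a_r$.
\item Let $j$ be the first index with $\sum_{i\le j}(1-p_i)\ge1$, and set $q=\sum_{i<j}(1-p_i)$.
\item Shadow to the levels $k_1,\dots,k_{j-1},qn,n,\dots,n$.
\end{itemize}
The first $j-1$ families are left untouched, the last $r-j$ contribute exactly $1$ each, and only $\F_j$ is pushed up. The critical-sum inequality therefore reduces to $\sum_{i<j}p_i(a_i-1)+\mu_{qn}(\partial_{k_j\to qn}\F_j)-q\le0$.

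Because $qn=\sum_{i<j}(n-k_i)$ is automatically an integer with $k_j\le qn<n$, there is no lattice-point issue at all. Your fallback (an open set of real $q$-vectors that must contain a lattice point) comes with no lower bound on the size of that set in terms of $1/n$, so it would not close the argument.

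With this choice only one shadow estimate is needed, and its three branches split the proof:
\begin{itemize}
\item If $a_j\ge1$, the third branch makes every term of the constraint nonnegative. This forces $a_i=1$ for $i\le j$, and the ordering gives $a_i\le1$ for $i>j$.
\item If $a_j\in[p_j,1)$ or $a_j\in(0,p_j)$, the middle or first branch gives exactly hypothesis (ii) or (i) of \cref{thm:product-inequality}, whose conclusion is strict.
\end{itemize}
This case analysis is also what shows that equality forces $x_i=p_i$ for every $i$. Your Step 3 assumes this without proof.

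Step 3 has a further concrete hole. If $r\ge3$ and $k_1=\cdots=k_r=(r-1)n/r$, then $\sum_ik_i=(r-1)n$, so every admissible vector has $\ell_i=k_i$ for all $i$. There is no index with $\ell_i>k_i$, and \cref{cor:shadow-at-star} gives nothing. This case needs a separate analysis of equality in the partition coupling. The paper does this in \cref{lem:uniform-partition-equality}:
\begin{itemize}
\item it passes to the dual families $\F_i^*$ on level $n/r$;
\item it shows that every ordered partition has exactly one selected block;
\item it deduces that the indicators of $\F_i^*$ and $\F_j^*$ differ by a constant;
\item it reduces to the equality case of Erd\H{o}s--Ko--Rado for an intersecting family of density $1/r$.
\end{itemize}

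Even when $\sum_i(n-k_i)>n$, your equality argument is incomplete. Applying \cref{cor:shadow-at-star} to $\F_i$ requires a level vector with $k_i<\ell_i<n$, not merely $\ell_i>k_i$, along which equality is known to hold. The paper first establishes $\mu_{k_t}(\F_t)=k_t/n$ for all $t$, then builds such a vector for each $i$, and treats $k_i=n-1$ separately. It then needs \cref{lem:common-star-center} to show the stars share a center; permuting indices only shows that each family is some star.

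Your handling of $k_i=0$ and of $r=2$, $k_1=k_2=n/2$ is correct.
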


The parameter range in \cref{thm:uniform-main} is best possible. If $k>(r-1)n/r$, then any $r$ members of $\binom{[n]}k$ have nonempty common intersection, since
$|A_1\cap\cdots\cap A_r|\geq n-\sum_{i=1}^r|[n]\setminus A_i|=rk-(r-1)n>0$. Taking all $r$ families equal to $\binom{[n]}k$ then gives product $1>(k/n)^r$.

The same coupling and~\cref{thm:biased-shadow-comparison} also settle the unequal-bias conjecture of Frankl and
Tokushige~\cite[Conjecture~12.13]{FT2018book}.

\begin{theorem}\label{thm:biased-main}
Let $n\geq1$ and $r\geq2$, let
$0\leq p_i\leq(r-1)/r$ for every $i\in[r]$, and let $\A_1,\ldots,\A_r\subseteq2^{[n]}$ be $r$-cross-intersecting. Then
$$\prod_{i=1}^r\mu_{p_i}(\A_i)\leq\prod_{i=1}^rp_i.$$
The equality cases are as follows.
\begin{itemize}
    \item If some $p_i=0$, both sides are zero for all such families.
    \item Suppose that $p_i>0$ for every $i$. If $r\geq3$, or if
    $r=2$ and $(p_1,p_2)\neq(1/2,1/2)$, then equality holds if and
    only if the families are full $1$-stars with a common center.
    \item If $r=2$ and $p_1=p_2=1/2$, equality holds if and only if there is an increasing family $\A\subseteq2^{[n]}$ with $\mu_{1/2}(\A)=1/2$ such that $$\A_1=\A\quad\text{and}\quad\A_2=\left\{B\subseteq[n]:[n]\setminus B\notin\A\right\}.$$
\end{itemize}
\end{theorem}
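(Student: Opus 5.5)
The plan is to reduce to increasing families and then use a coupling into higher biases. Replacing each $\A_i$ by its up-closure keeps the families $r$-cross-intersecting and does not decrease $\mu_{p_i}(\A_i)$, so we may assume every $\A_i$ is increasing. If some $p_i=0$, both sides vanish. Otherwise, fix biases $q_i\in[p_i,1]$ with $\sum_i(1-q_i)=1$, i.e.\ $\sum_i q_i=r-1$; this is possible because $\sum_ip_i\le r-1$.

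The key coupling step (as in \cite{CLL2026}) is to colour each element of $[n]$ independently, choosing colour $i\in[r]$ with probability $1-q_i$, and to let $B_i$ be the set of elements not of colour $i$. Then $B_i\sim\mu_{q_i}$ and $B_1\cap\cdots\cap B_r=\varnothing$. By $r$-cross-intersection, the events $B_i\in\A_i$ cannot all occur, so the union bound gives
\[
\sum_{i=1}^r\bigl(1-\mu_{q_i}(\A_i)\bigr)\ge 1 .
\]
Now apply \cref{thm:biased-shadow-comparison} to get $\mu_{q_i}(\A_i)\ge\Theta_{p_i,q_i}(\mu_{p_i}(\A_i))$. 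This yields the constraint
\begin{equation*}
\sum_{i=1}^r\Theta_{p_i,q_i}(x_i)\le r-1,\qquad x_i=\mu_{p_i}(\A_i),
\end{equation*}
valid for every admissible vector $(q_i)$.

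The analytic step is to show that this family of constraints forces $\prod x_i\le\prod p_i$. This is where I would invoke the analytic \cref{thm:product-inequality} announced in \cref{rem:role-of-shadow-bound}. Presumably it states exactly that if $\prod x_i>\prod p_i$, then some choice of $q_i$ with $\sum q_i=r-1$ violates the constraint. The choice of $q_i$ will depend on the $x_i$, e.g.\ $q_i$ solving $\log q_i/\log p_i=\log x_i/\log p_i\cdot c$ through the power branch, adjusted via the middle branch when some $x_i$ exceeds $p_i^2$. I expect this to be the main obstacle. With only the power bound, the choice $q_i=p_i^{d}$ and an AM--GM argument fail for unequal $p_i$ when some $x_i$ is near or above $p_i$, and this is exactly where the middle and third branches must supply extra slack.

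For equality, I would trace through the argument. Equality forces equality in the analytic inequality, which should force $x_i=p_i$ for all $i$ and some distinguished choice of $q_i$ with $q_i>p_i$, unless $r=2$ and $p_1=p_2=1/2$, where $q_i=p_i$ is forced. In the generic case, equality in \eqref{eq:biased-power-comparison} with $\mu_{p_i}(\A_i)=p_i$ forces each $\A_i$ to be a $1$-star $\{A:j_i\in A\}$. Cross-intersection then forces a common center: if the centers are distinct, the sets $\{j_i\}$ themselves are members of the families and are disjoint. The remaining step is to check that the original (not up-closed) families were already full stars; this holds because the measure is strictly positive, so any missing set would make the measure strictly smaller.

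In the case $r=2$ and $p=1/2$, the coupling gives $B_2=[n]\setminus B_1$. Equality $\mu(\A_1)+\mu(\A_2)=1$ together with $\A_2\subseteq\{B:[n]\setminus B\notin\A_1\}$ forces equality of the two families, and the product constraint forces $\mu_{1/2}(\A_1)=\mu_{1/2}(\A_2)=1/2$. Conversely, such pairs $(\A,\{B:[n]\setminus B\notin\A\})$ are cross-intersecting and attain the bound.
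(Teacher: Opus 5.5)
\textbf{There is a genuine gap.} Several of your steps are correct: the reduction to increasing families, the coupling bound $\sum_i\mu_{q_i}(\A_i)\le r-1$ whenever $\sum_iq_i=r-1$, and the appeal to \cref{thm:biased-shadow-comparison}. But the proposal stops where the real work of the proof lies. You never say which biases to use. Also, \cref{thm:product-inequality} is not the existential statement you guess; it is built around one specific choice.

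That choice is as follows. Put $a_i=\mu_{p_i}(\A_i)/p_i$ and relabel so that $a_1\ge\cdots\ge a_r$. Let $j$ be the first index with $\sum_{i\le j}(1-p_i)\ge1$, and set $q=\sum_{i<j}(1-p_i)$. Use the biases $p_1,\dots,p_{j-1},q,1,\dots,1$.
\begin{itemize}
\item The families with the largest ratios are \emph{not} moved.
\item The families with the smallest ratios are sent to bias $1$, where a nonempty increasing family has measure $1$.
\item The shadow comparison is applied only to the single pivot family $\A_j$, giving $\sum_{i<j}p_i(a_i-1)+\mu_q(\A_j)-q\le0$.
\end{itemize}
From there, split by the size of $a_j$. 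If $a_j\ge1$, the third branch makes every term nonnegative, so $a_i=1$ for $i\le j$, and the ordering gives $\prod_ia_i\le1$. If $p_j\le a_j<1$ or $a_j<p_j$, the middle or first branch yields hypothesis (ii) or (i) of \cref{thm:product-inequality}. Its proof (compression of the first $j-1$ variables, the function $\Delta$, \cref{lem:delta-properties} and \cref{clm:g-monotone}) is the actual analytic content.

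Your suggested alternative, moving every $q_i$ according to $x_i$ through the power branch, is not supported by any argument. Without the ordering and the pivot index, the inequality is not proved. The same omission undercuts your equality analysis: the conclusion $x_i=p_i$ for all $i$ rests on the strict inequality in cases (i) and (ii) and on the forced $a_i=1$ when $a_j\ge1$.

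\textbf{There is also a concrete hole in the equality classification.} Once $x_i=p_i$, you raise every bias to some $q_i\in(p_i,1)$ and invoke the equality case of \eqref{eq:biased-power-comparison}. This is possible only when $\sum_ip_i<r-1$; then $1-q_i=(1-p_i)/\sum_t(1-p_t)$ works. In that regime your route is fine and even avoids dual families: a subcube $\{A:S\subseteq A\}$ of measure $p_i$ has $|S|=1$, and the singletons $\{j_i\}$ force a common center.

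The problem is the case $r\ge3$ with $p_1=\cdots=p_r=(r-1)/r$, which the hypotheses allow. There $\sum_iq_i=r-1$ forces $q_i=p_i$, exactly as in the case $r=2$, $p_1=p_2=1/2$ that you single out. So the shadow comparison gives no information, yet the theorem asserts that only common full stars attain equality.

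The paper handles this through \cref{lem:biased-equality}:
\begin{itemize}
\item Pass to the duals $\A_i^*$. Every ordered partition of $[n]$ then has exactly one part with $P_i\in\A_i^*$.
\item Swapping two parts shows that all the $\A_i^*$ coincide. The alternative, one family empty and the rest full, is excluded by the densities.
\item Comparing two partitions that differ by moving a single element shows the common dual is a full $1$-star. The remaining alternatives (a co-star, or a constant indicator) are excluded because $r\ge3$.
\end{itemize}
You need an argument of this kind for the boundary case.
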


The biased range is also best possible. If $p>(r-1)/r$, then $\A=\left\{A\subseteq[n]:|A|>\frac{r-1}{r}n\right\}$ is $r$-wise intersecting, while $\mu_p(\A)\to1$ as $n\to\infty$. Consequently, $\mu_p(\A)^r>p^r$ for all sufficiently large $n$.

\subsection{Outline of the proof}

\paragraph{Proof strategy for~\cref{thm:uniform-main}.}
The first ingredient is the critical-sum coupling. If $0\leq\ell_i\leq n$ and $\sum_i\ell_i=(r-1)n$, then $r$-cross-intersecting families
$\F_i\subseteq\binom{[n]}{\ell_i}$ satisfy
$$
\sum_{i=1}^r\mu_{\ell_i}(\F_i)\leq r-1.
$$
Indeed, let $P_1,\ldots,P_r$ be a uniformly random ordered partition of
$[n]$ with $|P_i|=n-\ell_i$, and put $X_i=[n]\setminus P_i$. Each $X_i$ has the required marginal distribution, whereas $\bigcap_iX_i=\varnothing$.

Since upper shadows preserve $r$-cross-intersection, the same argument gives
$$
\sum_{i=1}^r\mu_{\ell_i}(\partial_{k_i\to\ell_i}\F_i)\leq r-1
$$
whenever $k_i\leq\ell_i\leq n$ and
$\sum_i\ell_i=(r-1)n$. Apart from the preservation of $r$-cross-intersection under upper shadows, the product proof uses the intersection hypothesis only
through these linear inequalities.

Put $p_i=k_i/n$ and $a_i=\mu_{k_i}(\F_i)/p_i$, and relabel the positive parameters so that $a_1\geq\cdots\geq a_r$. Let $j$ be the first index for which $\sum_{i\leq j}(1-p_i)\geq1$, and put $q=\sum_{i<j}(1-p_i)$. Applying the critical-sum inequality at the levels
$$
k_1,\ldots,k_{j-1},qn,n,\ldots,n
$$
gives
$$
\sum_{i<j}p_i(a_i-1)
+\mu_{qn}(\partial_{k_j\to qn}\F_j)-q\leq0.
$$
Put $\alpha=\log q/\log p_j$ and $\beta=\log(1-q)/\log(1-p_j)$. By \cref{thm:shadow-main},
$$
\mu_{qn}(\partial_{k_j\to qn}\F_j)-q
\geq
\begin{cases}
q(a_j^\alpha-1),&0<a_j\leq p_j,\\[1mm]
-q(1-a_j)^\beta,&p_j\leq a_j\leq1,\\[1mm]
(1-p_j)^\beta-(1-p_ja_j)^\beta,
&1\leq a_j\leq \frac{1}{p_j}.
\end{cases}
$$
For $a_j\in(0,p_j]$ and $a_j\in[p_j,1)$, respectively, the first two bounds together with~\eqref{ineq:uniform-critical-sum-constraint} give hypotheses~(i) and~(ii) of \cref{thm:product-inequality}. For $a_j\in[1,1/p_j]$, the third bound, the same constraint, and the
ordering $a_1\geq\cdots\geq a_r$ give
$\prod_{i=1}^ra_i\leq1$, with equality only when $a_1=\cdots=a_r=1$.

It remains to classify equality. The preceding argument first gives
$\mu_{k_i}(\F_i)=k_i/n$ for every $i$. If $\sum_i(n-k_i)>n$, then~\eqref{eq:uniform-power-comparison}, \cref{lem:uniform-critical-sum}, and the equality statement in~\cref{cor:shadow-at-star} show that every $\F_i$ is a full $1$-star; \cref{lem:common-star-center} then shows that their centers coincide. If $\sum_i(n-k_i)=n$, the equality cases
follow from \cref{lem:uniform-partition-equality} for $r\geq3$ and
from the complement relation for $r=2$.

\paragraph{Proof strategy for \cref{thm:shadow-main}.}
The proof of \cref{thm:shadow-main} proceeds by induction on $n$.
By \cref{lem:balanced-restriction}, choose a coordinate $i$ such that
$\mu_k(\F_0(i))\leq\mu_{k-1}(\F_1(i))$. Set $\F'=\partial_{k\to\ell}\F$. Then
$\F'_0(i)=\partial_{k\to\ell}\F_0(i)$ and $\F'_1(i)\supseteq\partial_{k-1\to\ell-1}\F_1(i)$. The two inductive bounds for $\F_0(i)$ and $\F_1(i)$ are combined by~\eqref{ineq:slice-two-point}, completing the inductive step.

Coordinate induction through a two-point inequality is a standard local-to-global method in discrete analysis. In Bellman-function language, \eqref{ineq:slice-two-point} is the discrete concavity condition corresponding to the two density identities for $\F$ and $\F'$. Its proof treats the three intervals in
\eqref{eq:theta-profile} separately and uses~\cref{lem:uniform-exponent-comparison,lem:two-point-powers,lem:theta-comparison}.
This induction scheme appears in Talagrand's proof of edge-isoperimetric inequalities on the discrete cube~\cite{T1993} and
in subsequent work on sharp discrete isoperimetric and functional
inequalities~\cite{BIM2023,BIMP2025,BG1999,DIPV2026,DIR2024,DIRX2026}.

On the slice, the concavity inequality depends on four parameters. The
families $\F_0(i)$ and $\F_1(i)$ lie on levels $k$ and $k-1$ of
$[n]\setminus\{i\}$, while $\F'_0(i)$ and $\F'_1(i)$ lie on levels
$\ell$ and $\ell-1$. Hence the two inductive estimates involve the
parameter pairs $(p_0,q_0)$ and $(p_1,q_1)$, whereas the density
identities for $\F$ and $\F'$ use the weights $(1-p,p)$ and
$(1-q,q)$, respectively. The new analytic step is to establish
\eqref{ineq:slice-two-point} with these parameter pairs and weights;
the coordinate-induction principle itself is classical.

\medskip\noindent\emph{Organization.}
\Cref{sec:pre} fixes the notation and records the preliminary
estimates. \Cref{sec:shadow} proves the two-point inequality and the
upper-shadow theorem, derives the biased-measure and infinitesimal
forms, proves the composition identity, identifies asymptotically
sharp families, and deduces the Erd\H{o}s--Ko--Rado theorem from
\eqref{eq:uniform-power-comparison}. \Cref{sec:uniform-proof} proves
the Frankl--Tokushige conjectures and classifies equality. The
analytic product inequality is proved in
\cref{sec:product-inequality}, and \cref{sec:concluding} discusses the
$r$-cross $t$-intersecting problem.

\section{Preliminaries}\label{sec:pre}
This section collects the notation and elementary estimates used throughout the proof. 

For a finite set $X$ and an integer $k$, let $\binom{X}{k}$ denote the collection of all $k$-element subsets of $X$. We refer to $\binom{X}{k}$ as the $k$-th level, or the $k$-th slice, of the Boolean lattice $2^X$. For $j\in[n]$, let $\Sstar_j=\{A\subseteq[n]:j\in A\}$ denote the $1$-star centered at $j$. The families $\Sstar_j\cap\binom{[n]}{k_1},\ldots, \Sstar_j\cap\binom{[n]}{k_r}$ are referred to as the corresponding levels of a common $1$-star.

Let $\F\subseteq\binom{[n]}k$ and $i\in[n]$. We define the $0$-restriction of $\F$ at $i$ and the link of $\F$ at $i$, respectively, by
$$
\begin{aligned}
\F_0(i)&=\{A\in\F:i\notin A\}\subseteq\binom{[n]\setminus\{i\}}{k},\\
\F_1(i)&=\{A\setminus\{i\}:A\in\F,\ i\in A\}\subseteq
\binom{[n]\setminus\{i\}}{k-1}.
\end{aligned}
$$
Their normalized densities are
$$
\mu_k(\F_0(i))=\frac{|\F_0(i)|}{\binom{n-1}{k}},
\qquad
\mu_{k-1}(\F_1(i))=\frac{|\F_1(i)|}{\binom{n-1}{k-1}}.
$$
Thus, writing $p=k/n$, we have
$$
\mu_k(\F)=(1-p)\mu_k(\F_0(i))+p\mu_{k-1}(\F_1(i)).
$$
When the distinguished coordinate is $n$, we identify
$[n]\setminus\{n\}$ with $[n-1]$ and abbreviate
$\F_0(n),\F_1(n)$ to $\F_0,\F_1$, respectively.

Averaging over the coordinate gives the choice used in the induction.

\begin{lemma}\label{lem:balanced-restriction}
Let $1\leq k<n$ and $\F\subseteq\binom{[n]}k$. Then
$$
\frac{1}{n}\sum_{i=1}^n\mu_k(\F_0(i))=
\frac{1}{n}\sum_{i=1}^n\mu_{k-1}(\F_1(i))=\mu_k(\F).
$$
In particular, there exists a coordinate $i\in[n]$ such that $\mu_k(\F_0(i))\leq \mu_{k-1}(\F_1(i))$.
\end{lemma}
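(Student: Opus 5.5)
The plan is to double count incidences between members of $\F$ and coordinates. For the $0$-restrictions, I would write
$$
\sum_{i=1}^n|\F_0(i)|=\sum_{A\in\F}\bigl|[n]\setminus A\bigr|=(n-k)|\F|.
$$
Here each $A\in\F$ lies in $\F_0(i)$ exactly when $i\notin A$. Dividing by $n\binom{n-1}{k}$ and using the identity $\binom{n-1}{k}=\frac{n-k}{n}\binom nk$ then gives
$$
\frac1n\sum_{i=1}^n\mu_k(\F_0(i))=\frac{(n-k)|\F|}{n\binom{n-1}{k}}=\frac{|\F|}{\binom nk}=\mu_k(\F).
$$
The hypothesis $k<n$ is exactly what makes $\binom{n-1}{k}\neq0$, so $\mu_k(\F_0(i))$ is defined.

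The links are handled the same way. Since $|\F_1(i)|=|\{A\in\F:i\in A\}|$, we get $\sum_i|\F_1(i)|=k|\F|$. Together with $\binom{n-1}{k-1}=\frac kn\binom nk$, which is nonzero because $k\geq1$, this yields the second average:
$$
\frac1n\sum_{i=1}^n\mu_{k-1}(\F_1(i))=\mu_k(\F).
$$
As a consistency check, the same conclusion also follows from averaging the identity $\mu_k(\F)=(1-p)\mu_k(\F_0(i))+p\mu_{k-1}(\F_1(i))$ over $i$ and using the first equality.

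For the final assertion, the two averages are equal, so
$$
\sum_{i=1}^n\bigl(\mu_{k-1}(\F_1(i))-\mu_k(\F_0(i))\bigr)=0.
$$
A sum of real numbers equal to zero cannot have all its terms negative, so some index $i$ satisfies $\mu_k(\F_0(i))\leq\mu_{k-1}(\F_1(i))$.

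I do not expect a real obstacle. The only care needed is with the binomial identities and with the range $1\leq k<n$, which guarantees both normalizing binomials are nonzero.
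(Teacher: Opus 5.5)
Your proposal is correct and is the paper's own argument: double counting the coordinates each member of $\F$ avoids ($n-k$) or contains ($k$), combined with $\binom{n-1}{k}=\frac{n-k}{n}\binom nk$ and $\binom{n-1}{k-1}=\frac kn\binom nk$. The final assertion then follows, as you say, because the two averages are equal. You also add the check that the range $1\leq k<n$ keeps both normalizing binomials nonzero, which the paper leaves implicit.
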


\begin{proof}
Each member of $\F$ avoids exactly $n-k$ coordinates and contains exactly $k$ coordinates. Double counting, together with
$
\binom{n-1}k=\frac{n-k}{n}\binom nk$ and $\binom{n-1}{k-1}=\frac{k}{n}\binom nk,
$
gives both identities. The final assertion follows because the two averages are equal.
\end{proof}

\subsection{Dual families}
For $\F\subseteq\binom{[n]}{k}$, define its \emph{uniform dual family} by
$$
\F^*:=\left\{B\in\binom{[n]}{n-k}:
[n]\setminus B\notin\F\right\}.
$$
The complement map gives $(\F^*)^*=\F$ and $\mu_{n-k}(\F^*)=1-\mu_k(\F)$. Similarly, for $\A\subseteq2^{[n]}$, define its \emph{dual family} by
$$\A^*:=\{B\subseteq[n]:[n]\setminus B\notin\A\}.$$
Then $(\A^*)^*=\A$ and $\mu_q(\A^*)=1-\mu_{1-q}(\A)$ for every $0\leq q\leq1$.

The next lemma records the covering property needed for uniform cross-intersecting families.
\begin{lemma}\label{lem:uniform-dual-family}
Let $r\geq2$, and suppose that
$\F_1\subseteq\binom{[n]}{k_1},\dots,\F_r\subseteq\binom{[n]}{k_r}$ are $r$-cross-intersecting. Then, for every $i\in[r]$,
$$\mu_{n-k_i}(\F_i^*)=1-\mu_{k_i}(\F_i). $$
Moreover, for every ordered partition $[n]=P_1\mathbin{\dot\cup}\cdots\mathbin{\dot\cup}P_r$ satisfying $|P_i|=n-k_i$ for every $i\in[r]$, there exists some $i\in[r]$ such that $P_i\in\F_i^*$.
\end{lemma}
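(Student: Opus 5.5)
The first assertion is the general identity $\mu_{n-k}(\F^*)=1-\mu_k(\F)$ recorded just before the lemma. The intersection hypothesis plays no role there. For completeness, the complement map $B\mapsto[n]\setminus B$ is a bijection from $\binom{[n]}{n-k_i}$ onto $\binom{[n]}{k_i}$. By definition it sends $\F_i^*$ onto $\binom{[n]}{k_i}\setminus\F_i$. Hence
$$|\F_i^*|=\binom{n}{k_i}-|\F_i|,$$
and dividing by $\binom{n}{n-k_i}=\binom{n}{k_i}$ gives the identity.

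For the covering statement I would argue by contradiction. Fix an ordered partition $[n]=P_1\mathbin{\dot\cup}\cdots\mathbin{\dot\cup}P_r$ with $|P_i|=n-k_i$, and suppose that $P_i\notin\F_i^*$ for every $i\in[r]$. Unwinding the definition of $\F_i^*$, the condition $P_i\notin\F_i^*$ means exactly that $X_i:=[n]\setminus P_i$ belongs to $\F_i$; note that $|X_i|=k_i$, so $X_i$ lies on the correct level. Since the $P_i$ cover $[n]$, we get
$$\bigcap_{i=1}^r X_i=[n]\setminus\bigcup_{i=1}^r P_i=\varnothing,$$
which contradicts the $r$-cross-intersecting hypothesis applied to $X_1\in\F_1,\ldots,X_r\in\F_r$. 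Hence some $P_i$ lies in $\F_i^*$.

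Both steps are routine unwindings of the definitions, and I do not expect any real obstacle. The only point needing care is the bookkeeping of levels: $P_i$ must be checked to lie in $\binom{[n]}{n-k_i}$, which holds by the size condition $|P_i|=n-k_i$, so that membership in $\F_i^*$ is meaningful. Disjointness of the $P_i$ is not even needed here; it only forces $\sum_i k_i=(r-1)n$ for such partitions to exist.
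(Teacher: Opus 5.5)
Your proposal is correct and follows the paper's proof essentially verbatim: the complement bijection gives $|\F_i^*|=\binom{n}{k_i}-|\F_i|$, and the covering property follows by contradiction, since $[n]\setminus P_i\in\F_i$ for all $i$ together with $\bigcup_i P_i=[n]$ yields an empty common intersection. Your side remark is also accurate: the argument uses only that the $P_i$ cover $[n]$.
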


\begin{proof}
The complement map $B\mapsto[n]\setminus B$ is a bijection from $\binom{[n]}{n-k_i}$ onto $\binom{[n]}{k_i}$, and hence
$$|\F_i^*|=\binom{n}{k_i}-|\F_i|.$$
Since
$\binom{n}{n-k_i}=\binom{n}{k_i}$, we obtain
$$
\mu_{n-k_i}(\F_i^*)=\frac{|\F_i^*|}{\binom{n}{n-k_i}}=1-\frac{|\F_i|}{\binom{n}{k_i}}=1-\mu_{k_i}(\F_i).
$$

It remains to prove the covering property. Let $[n]=P_1\mathbin{\dot\cup}\cdots\mathbin{\dot\cup}P_r$ be an ordered partition with $|P_i|=n-k_i$ for every $i\in[r]$.
Suppose, to the contrary, that
$P_i\notin\F_i^*$ for every $i\in[r]$.
By the definition of $\F_i^*$, this implies $[n]\setminus P_i\in\F_i$ for every $i\in[r]$. On the other hand, since $P_1,\ldots,P_r$ form a partition of $[n]$,
$$
\bigcap_{i=1}^r ([n]\setminus P_i)
=[n]\setminus\bigcup_{i=1}^rP_i
=\varnothing.
$$
This contradicts the assumption that
$\F_1,\ldots,\F_r$ are $r$-cross-intersecting. This completes the proof.
\end{proof}

For increasing families in $2^{[n]}$, the same complement argument requires no restriction on the sizes of the parts in the ordered partition. We use this version in the biased setting.

\begin{lemma}[{\cite[Lemma~4.1]{CLL2026}}]\label{lem:biased-dual-family}
    Let $r\geq2$. Suppose that $\A_1,\dots,\A_r\subseteq 2^{[n]}$ are increasing and $r$-cross-intersecting. Then every $\A_i^*$ is an increasing family, $\mu_{1/r}(\A_i^*)=1-\mu_{(r-1)/r}(\A_i)$ for every $i\in[r]$, and for every ordered partition $[n]=P_1\mathbin{\dot\cup}\cdots\mathbin{\dot\cup}P_r$, there exists some $i\in[r]$ such that $P_i\in\A_i^*$.
\end{lemma}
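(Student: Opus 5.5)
The plan is to mirror the proof of \cref{lem:uniform-dual-family}, replacing the cardinality constraint on the parts by monotonicity of the families. There are three claims.

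\emph{Monotonicity of $\A_i^*$.} Suppose $B\in\A_i^*$ and $B\subseteq C$. Then $[n]\setminus C\subseteq[n]\setminus B$. If $[n]\setminus C$ were in $\A_i$, then $[n]\setminus B$ would be in $\A_i$ as well, because $\A_i$ is increasing. That contradicts $B\in\A_i^*$, so $[n]\setminus C\notin\A_i$ and hence $C\in\A_i^*$.

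\emph{The measure identity.} The complement map sends $\mu_{1/r}$ to $\mu_{(r-1)/r}$, since $p^{|B|}(1-p)^{n-|B|}$ with $p=1/r$ equals $(1-p')^{|[n]\setminus B|}p'^{\,n-|[n]\setminus B|}$ with $p'=(r-1)/r$. Therefore
\[
\mu_{1/r}(\A_i^*)=1-\mu_{1/r}\bigl(\{B:[n]\setminus B\in\A_i\}\bigr)=1-\mu_{(r-1)/r}(\A_i).
\]
This is the special case $q=1/r$ of the general identity $\mu_q(\A^*)=1-\mu_{1-q}(\A)$ already noted above.

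\emph{The covering property.} Take any ordered partition $P_1,\dots,P_r$, where some parts may be empty. Suppose for contradiction that $P_i\notin\A_i^*$ for every $i$. Then $X_i:=[n]\setminus P_i\in\A_i$ for every $i$, and
\[
\bigcap_i X_i=[n]\setminus\bigcup_i P_i=\varnothing,
\]
which contradicts $r$-cross-intersection. No constraint on $|P_i|$ is needed here, which is why the statement allows arbitrary partitions.

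I do not expect any real obstacle. The only point needing care is the degenerate case of empty parts and empty families. If some $\A_i=\varnothing$, then $\A_i^*=2^{[n]}$ contains every $P_i$, so the claim holds trivially. Monotonicity is used only in the first assertion.
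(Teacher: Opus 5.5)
Your proof is correct and uses the same complement argument the paper relies on. The paper cites \cite{CLL2026} for this lemma and notes that the proof of \cref{lem:uniform-dual-family} carries over verbatim, with no size restriction on the parts, because the covering step only uses $\bigcap_i([n]\setminus P_i)=\varnothing$.

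There is one slip in the weight identity: you swapped $p'$ and $1-p'$. It should read $p^{|B|}(1-p)^{n-|B|}=p'^{\,|[n]\setminus B|}(1-p')^{\,n-|[n]\setminus B|}$. As you wrote it, the right-hand side equals $(1/r)^{n-|B|}\bigl((r-1)/r\bigr)^{|B|}$, which matches the left-hand side only when $r=2$ or $|B|=n/2$. Your claim that the complement map pushes $\mu_{1/r}$ forward to $\mu_{(r-1)/r}$, and the displayed identity that follows from it, are both correct.
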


\subsection{Auxiliary inequalities}

The restriction decomposition reduces the upper-shadow comparison to
two one-dimensional estimates. The first compares the logarithmic
exponents arising after restriction, and the second combines the
corresponding powers.

\begin{lemma}\label{lem:uniform-exponent-comparison}
Let $0<a\leq b<1$.
\begin{enumerate}[label={\rm (\roman*)}]
\item If $0\leq\delta<a$, then
$\frac{\log\frac{a-\delta}{1-\delta}}{\log\frac{b-\delta}{1-\delta}}\geq\frac{\log a}{\log b}$.
\item If $0\leq\delta<1-b$, then
$\frac{\log\frac{a}{1-\delta}}{\log\frac{b}{1-\delta}}\geq
\frac{\log a}{\log b}$.
\end{enumerate}
\end{lemma}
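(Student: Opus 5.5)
Both parts reduce to a one-variable monotonicity statement. Since the logarithms are negative, I will clear denominators carefully. Throughout, write $A=\log a$ and $B=\log b$, so that $A\leq B<0$.

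\emph{Part (ii).} Put $s=-\log(1-\delta)\geq0$. Then $\log\frac{a}{1-\delta}=A+s$ and $\log\frac{b}{1-\delta}=B+s$. The hypothesis $\delta<1-b$ gives $b<1-\delta$, so $B+s<0$, and hence also $A+s<0$. The claim is therefore $f(s)\geq f(0)$, where
$$
f(s)=\frac{A+s}{B+s},\qquad s\in[0,-B).
$$
Differentiating gives $f'(s)=\frac{B-A}{(B+s)^2}\geq0$. So $f$ is nondecreasing on $[0,-B)$, which proves (ii).

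\emph{Part (i).} Fix $\delta\in[0,a)$ and set $\varphi(x)=\frac{x-\delta}{1-\delta}$ for $x\in(\delta,1)$. Both $\log\varphi(a)$ and $\log\varphi(b)$ are negative, and so are $\log a$ and $\log b$. Multiplying the desired inequality by the positive number $\log\varphi(b)\log b$ and then dividing by the positive number $\log a\log b$ shows that (i) is equivalent to
$$
\frac{\log\varphi(a)}{\log a}\geq\frac{\log\varphi(b)}{\log b}.
$$
So it suffices to show that $R(x)=\log\varphi(x)/\log x$ is nonincreasing on $(\delta,1)$.

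Substitute $x=e^t$ with $t\in(\log\delta,0)$, and put $u(t)=\log\varphi(e^t)$ (when $\delta=0$, read $\log\delta=-\infty$). Then $u(0)=0$ and
$$
R(e^t)=\frac{u(t)}{t}=\frac{u(0)-u(t)}{0-t},
$$
which is the slope of the chord of $u$ over $[t,0]$. Since $t\mapsto e^t$ is increasing, it suffices to show that this chord slope is nonincreasing in $t$.

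This holds if $u$ is concave. Indeed, for a concave function the chord slope over $[t,0]$ with fixed right endpoint decreases as the left endpoint $t$ increases towards $0$. Concavity follows from
$$
u'(t)=\frac{e^t}{e^t-\delta}=1+\frac{\delta}{e^t-\delta},
$$
which is nonincreasing in $t$ because $e^t-\delta>0$ is increasing. This proves (i).

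The only delicate point is the sign bookkeeping when cross-multiplying negative logarithms in (i). Recognising $R$ as a chord slope of a concave function reduces the rest to a single derivative computation. For $\delta=0$ both parts are trivial equalities.
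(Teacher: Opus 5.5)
Your proof is correct. Part~(ii) matches the paper's argument up to notation: the paper differentiates $\frac{\log a-\log c}{\log b-\log c}$ in $\log c$, with $c=1-\delta$, which is your computation of $f'(s)$ with $s=-\log c$.

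Part~(i) takes a genuinely different route. The paper keeps $a,b$ fixed and varies $\delta$. With $u_x=\frac{x-\delta}{1-\delta}$ it computes $\frac{d}{d\delta}\log(-\log u_x)=\frac{1}{1-\delta}\cdot\frac{1-u_x}{u_x(-\log u_x)}$. It then observes that the last factor equals $(e^v-1)/v$ for $u_x=e^{-v}$, so it decreases in $u_x$. From $u_a\le u_b$ it concludes that $\log u_a/\log u_b$ is nondecreasing in $\delta$, starting from $\log a/\log b$ at $\delta=0$.

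You instead fix $\delta$ and vary the point. After a correctly signed cross-multiplication, you show that $x\mapsto\log\varphi(x)/\log x$ is nonincreasing on $(\delta,1)$. You do this by reading it as the chord slope over $[t,0]$ of the concave function $u(t)=\log\frac{e^t-\delta}{1-\delta}$, which vanishes at $t=0$. Your version needs only the sign of $u''(t)=-\delta e^t/(e^t-\delta)^2$ and the three-chord property, and it avoids the auxiliary function $(e^v-1)/v$. Structurally it is the same monotone-ratio mechanism the paper uses for $\Phi$ in \cref{lem:theta-comparison}.

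The paper's version treats both parts uniformly as monotonicity in $\delta$ and needs no cross-multiplication. That is no real extra information, though. Your statement also yields monotonicity in $\delta$ through the composition $\varphi_{\delta_2}=\varphi_{\delta'}\circ\varphi_{\delta_1}$, where $1-\delta_2=(1-\delta_1)(1-\delta')$.
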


\begin{proof}
For (ii), put $c=1-\delta$. The derivative of $\frac{\log a-\log c}{\log b-\log c}$ with respect to $\log c$ is $\frac{\log a-\log b}{(\log b-\log c)^2}\leq 0$. Since $\log c$ decreases with $\delta$, the ratio is nondecreasing in $\delta$.

For (i), put $u_x=(x-\delta)/(1-\delta)$. Direct differentiation gives
$$
\frac{d}{d\delta}\log(-\log u_x)=
\frac{1}{1-\delta}\frac{1-u_x}{u_x(-\log u_x)}.
$$
The last factor decreases with $u_x$: after writing $u_x=\e^{-v}$, it becomes $(\e^v-1)/v$, which increases with $v$. Since $u_a\leq u_b$, the ratio $(-\log u_a)/(-\log u_b)$ is nondecreasing in $\delta$, proving (i).
\end{proof}

\begin{lemma}\label{lem:two-point-powers}
Let $0<t<1$ and $0\leq z\leq1$.
\begin{enumerate}[label={\rm (\roman*)}]
\item If $\gamma\geq1$, then $t^\gamma+(1-t^\gamma)z^\gamma\leq\bigl(t+(1-t)z\bigr)^\gamma$.
\item If $0<\beta\leq1$, then $\bigl(t+(1-t)z\bigr)^\beta\leq t^\beta+(1-t^\beta)z^\beta$.
\end{enumerate}
\end{lemma}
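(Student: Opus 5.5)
We prove both parts by fixing $t$ and $\gamma$ (resp.\ $\beta$) and viewing each side as a function of $z\in[0,1]$. The endpoints agree: at $z=0$ both sides equal $t^\gamma$, and at $z=1$ both sides equal $1$. So it suffices to compare the two sides between the endpoints, using convexity or concavity in $z$.

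For (i), put
$$
L(z)=t^\gamma+(1-t^\gamma)z^\gamma,\qquad R(z)=\bigl(t+(1-t)z\bigr)^\gamma .
$$
Set $w=t+(1-t)z$, so $w\in[t,1]$ and $z=(w-t)/(1-t)$. The claim becomes $g(w)\ge 0$ on $[t,1]$, where
$$
g(w)=w^\gamma-t^\gamma-(1-t^\gamma)\Bigl(\frac{w-t}{1-t}\Bigr)^\gamma ,
$$
and $g(t)=g(1)=0$. I will not rely on concavity of $g$, since the difference of two convex functions need not be concave. Instead I use a normalized comparison. Write $w^\gamma-t^\gamma=\gamma\int_t^w s^{\gamma-1}\,ds$ and $\bigl(\frac{w-t}{1-t}\bigr)^\gamma=\gamma\int_t^w \frac{(s-t)^{\gamma-1}}{(1-t)^\gamma}\,ds$. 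Define
$$
h(s)=\frac{s^{\gamma-1}}{1-t^\gamma},\qquad k(s)=\frac{(s-t)^{\gamma-1}}{(1-t)^{\gamma}} .
$$
Both $h$ and $k$ have integral $1/\gamma$ over $[t,1]$. The ratio
$$
\frac{k(s)}{h(s)}=\frac{1-t^\gamma}{(1-t)^\gamma}\Bigl(1-\frac{t}{s}\Bigr)^{\gamma-1}
$$
is nondecreasing in $s$ because $\gamma\ge1$. A density with nondecreasing ratio over another density of the same total mass puts less mass on every initial interval, so $\int_t^w k\le\int_t^w h$ for each $w\in[t,1]$. Multiplying by $\gamma(1-t^\gamma)$ gives
$$
(1-t^\gamma)\Bigl(\frac{w-t}{1-t}\Bigr)^\gamma\le w^\gamma-t^\gamma ,
$$
which is exactly $g(w)\ge 0$.

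For (ii), the same argument runs with $\beta\in(0,1]$ in place of $\gamma$. The exponent $\beta-1\le0$ makes the ratio $k/h$ nonincreasing in $s$. The comparison of initial integrals therefore reverses, and we get
$$
w^\beta-t^\beta\le(1-t^\beta)\Bigl(\frac{w-t}{1-t}\Bigr)^\beta ,
$$
which is the stated inequality. The singularity of $(s-t)^{\beta-1}$ at $s=t$ is integrable, so nothing changes. The case $\beta=1$ (and likewise $\gamma=1$) is an identity.

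The main obstacle is justifying the mass-comparison step. I handle it with the standard single-crossing argument. The function $h-k$ integrates to zero over $[t,1]$, and since $k/h$ is monotone, $h-k$ changes sign at most once, from $+$ to $-$ in case (i). Hence $\int_t^w (h-k)$ rises from $0$ and then falls back to $0$ at $w=1$, so it stays nonnegative throughout. Case (ii) is symmetric.
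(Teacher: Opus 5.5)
Your proof is correct and is essentially the paper's argument. Your integrand $h-k$ is, up to a positive factor, the derivative of the difference. Its single sign change from $+$ to $-$ is the paper's observation that $F'(z)$ has the sign of $(1-t)(t/z+1-t)^{\gamma-1}-(1-t^\gamma)$, which decreases through zero exactly once, and both arguments conclude using vanishing at the two endpoints.

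The only difference is in (ii). The paper deduces (ii) from (i) by taking exponent $1/\beta$, parameter $t^\beta$ and variable $z^\beta$, and then raising to the power $\beta$. You instead rerun the single-crossing argument with the ratio monotonicity reversed; both routes are valid.
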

\begin{proof}
We first prove part {\rm (i)}. The case $\gamma=1$ is an equality, so
let $\gamma>1$ and set
$$
F(z)=\bigl(t+(1-t)z\bigr)^\gamma-t^\gamma-(1-t^\gamma)z^\gamma.
$$
We have $F(0)=F(1)=0$, and for $z>0$, the sign of $F'(z)$ is the sign of
$$
(1-t)\left(\frac{t}{z}+1-t\right)^{\gamma-1}-(1-t^\gamma).
$$
This expression decreases strictly from $+\infty$ to $(1-t)-(1-t^\gamma)<0$. Hence $F$ first increases and then decreases, so $F\geq0$ on $[0,1]$.

For part {\rm (ii)}, apply part {\rm (i)} with exponent $\frac{1}{\beta}$, parameter $t^\beta$, and variable $z^\beta$, and then raise the resulting inequality to the power $\beta$.
\end{proof}

The following lemma compares the three functions in~\eqref{eq:theta-profile} and shows that $\Theta_{p,q}$ is their
pointwise maximum, with the middle function restricted to $[0,p]$.

\begin{lemma}\label{lem:theta-comparison}
Let $0<p\leq q<1$. Then
\begin{enumerate}
    \item[(i)] $\Theta_{p,q}(x)\geq x^{\frac{\log q}{\log p}}$ for all $0\leq x\leq 1$;
    \item[(ii)] $\Theta_{p,q}(x)\geq q\left[1-\left(1-\frac{x}{p}\right)^{\frac{\log(1-q)}{\log(1-p)}}\right]$ for all $0\leq x\leq p$;
    \item[(iii)] $\Theta_{p,q}(x)\geq 1-(1-x)^{\frac{\log(1-q)}{\log(1-p)}}$ for all $0\leq x\leq 1$.
\end{enumerate}
\end{lemma}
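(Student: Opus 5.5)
Write $\alpha=\log q/\log p\in(0,1]$ and $\beta=\log(1-q)/\log(1-p)\geq1$, and let
\[
f_1(x)=x^\alpha,\qquad f_2(x)=q\bigl[1-(1-x/p)^\beta\bigr]\ (0\leq x\leq p),\qquad f_3(x)=1-(1-x)^\beta .
\]
Since $\Theta_{p,q}$ is defined piecewise from these three functions, the plan is to prove that each $f_j$ lies below each of the others on the intervals where $\Theta_{p,q}$ uses that other function. Concretely, it suffices to establish three pairwise comparisons with a single crossing: $f_1\geq f_2$ on $[0,p^2]$ and $f_2\geq f_1$ on $[p^2,p]$; $f_2\geq f_3$ on $[0,p]$; and $f_1\leq f_3$ on $[p,1]$, together with $f_1\geq f_3$ on $[0,p^2]$. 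The last inequality follows by chaining $f_1\geq f_2\geq f_3$ there. Note the useful anchor values $f_1(p)=q=f_3(p)$, $f_1(p^2)=q^2=f_2(p^2)$ (since $(1-p)^\beta=1-q$), $f_2(p)=q$, and $f_1(1)=f_3(1)=1$.

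\emph{Comparison of $f_2$ and $f_3$ on $[0,p]$.} Substitute $x=pz$. Then $1-pz=(1-p)+p(1-z)$, and we need
\[
q\bigl[1-(1-z)^\beta\bigr]\geq 1-(1-pz)^\beta,
\]
that is, $(1-pz)^\beta\geq (1-q)+q(1-z)^\beta$. With $t=1-p$ and $w=1-z$, and noting $t^\beta=1-q$, this reads $(t+(1-t)w)^\beta\geq t^\beta+(1-t^\beta)w^\beta$. This is exactly \cref{lem:two-point-powers}(i) with $\gamma=\beta\geq1$.

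\emph{Comparison of $f_1$ and $f_3$ on $[p,1]$.} Put $g(x)=f_3(x)-f_1(x)$. Here $g(p)=g(1)=0$ and $g(0)=0$. The function $x\mapsto 1-(1-x)^\beta$ is concave and $x^\alpha$ is concave, so I would instead argue via a sign-change count: in the variables, $g'(x)=\beta(1-x)^{\beta-1}-\alpha x^{\alpha-1}$, and $g'=0$ iff $\log\beta+(\beta-1)\log(1-x)=\log\alpha+(\alpha-1)\log x$. The difference of the two sides, $h(x)=\log\frac{\beta}{\alpha}+(\beta-1)\log(1-x)-(\alpha-1)\log x$, is strictly concave on $(0,1)$ when $\alpha<1<\beta$, so $g'$ has at most two zeros; moreover $g'(0^+)=-\infty$. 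Hence $g$ has at most three zeros counting $0$, namely $0$, $p$ and $1$, with sign pattern $-,+$ on $(0,p),(p,1)$. This gives $f_3\geq f_1$ on $[p,1]$ (and also $f_1\geq f_3$ on $[0,p]$, consistent with the chain). Degenerate cases $p=q$ are trivial, since then all three functions coincide with $x$ on their domains.

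\emph{Comparison of $f_1$ and $f_2$ on $[0,p]$; this is the main obstacle.} Both functions vanish at $0$ and agree at $p^2$, while at $p$ we have $f_2(p)=q=f_1(p)$. So there are three agreement points, and I expect to use the same zero-counting approach. Write $k(x)=f_2(x)-f_1(x)$, so that $k'(x)=(q\beta/p)(1-x/p)^{\beta-1}-\alpha x^{\alpha-1}$. Setting $k'=0$ and taking logarithms gives an equation whose difference function is again strictly concave in $x$ on $(0,p)$: it is a sum of $(\beta-1)\log(1-x/p)$ and $-(\alpha-1)\log x$. Hence $k'$ has at most two zeros. Combined with $k'(0^+)=-\infty$ and the zeros of $k$ at $0,p^2,p$, this forces $k\leq0$ on $[0,p^2]$ and $k\geq0$ on $[p^2,p]$, as required. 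The delicate points are ensuring that the zeros at $p^2$ and $p$ are genuine crossings rather than tangencies, and handling the limiting case $\beta=1$ or $\alpha=1$, where concavity is not strict. For these I would perturb or check directly, for instance when $p=q$ all functions equal $x$. Assembling the three comparisons yields (i)–(iii).
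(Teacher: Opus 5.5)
Your proof is correct, but it is organized differently from the paper's.

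\emph{The paper's route.} The paper proves a single crossing statement: $U(t)=1-(1-t)^\beta$ satisfies $U(t)\le t^\alpha$ on $[0,p]$ and $U(t)\ge t^\alpha$ on $[p,1]$. It shows this by proving that $\Phi(t)=\log(1-t^\alpha)/\log(1-t)$ is decreasing, via a monotone-ratio argument and weighted AM--GM, and that $\Phi(p)=\beta$. It then observes that $f_2(x)=qU(x/p)$ and $f_1(x)=q(x/p)^\alpha$. So the $f_1$-versus-$f_2$ comparison, with its crossing at $p^2$, is the same claim rescaled. Part (iii) on $[0,p]$ then follows from $U\le f_1\le\Theta_{p,q}$.

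\emph{Your route.} You prove each crossing by a Rolle count. The logarithm of the ratio of the two terms in the derivative is strictly concave, so the derivative has at most two zeros, and the three known zeros then fix the signs. You also prove $f_2\ge f_3$ directly from \cref{lem:two-point-powers}(i). This is equally elementary, and that direct comparison is a nice touch.

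\emph{Redundancies.} There are two you could remove.
\begin{itemize}
\item Since $k(pt)=q\,g(t)$, your comparison of $f_1$ and $f_2$ is literally your comparison of $f_1$ and $f_3$ after the substitution $x=pt$.
\item Your comparison of $f_2$ and $f_3$ is not needed. The $g$-argument already gives $f_1\ge f_3$ on $[0,p]$, and combined with the $k$-comparison this yields (iii) on $[0,p]$. This is exactly how the paper argues.
\end{itemize}

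\emph{The points you flagged as delicate.} These are already settled by your own count.
\begin{itemize}
\item A tangency at $p^2$ would give $k'(p^2)=0$, in addition to the Rolle zeros of $k'$ in $(0,p^2)$ and $(p^2,p)$. That is three zeros of $k'$, which is a contradiction.
\item The sign of $k$ on $(p^2,p)$ follows either from this observation or from $k'(p^-)=-\alpha p^{\alpha-1}<0$, using $\beta>1$. The sign of $g$ on $(p,1)$ follows likewise from $g'(1^-)=-\alpha<0$.
\item The conditions $\alpha=1$, $\beta=1$ and $p=q$ are all equivalent. So no intermediate degenerate case needs perturbation; when $p<q$ you always have $\alpha<1<\beta$ strictly.
\end{itemize}
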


\begin{proof}
Set $\alpha:=\frac{\log q}{\log p}$ and $\beta:=\frac{\log(1-q)}{\log(1-p)}$. Since $0<p\leq q<1$, we have $0<\alpha\leq 1\leq \beta$, and, by the definitions of $\alpha$ and $\beta$, $p^\alpha=q$ and $(1-p)^\beta=1-q$.

If $p=q$, then $\alpha=\beta=1$ and all three assertions are immediate. Hence assume that $p<q$, so that $0<\alpha<1<\beta$, and define
$$
U(t):=1-(1-t)^\beta
$$
on $t\in [0,1]$.
\begin{claim}\label{clm:power-star-crossover}
    $U(t)\leq t^\alpha \text{ for } 0\leq t\leq p\text{ and } U(t)\geq t^\alpha \text{ for } p\leq t\leq1$.
\end{claim}
For $0<t<1$, define
$$
\Phi(t):=\frac{\log(1-t^\alpha)}{\log(1-t)}.
$$
Let $f(t)=-\log(1-t^\alpha)$ and $g(t)=-\log(1-t)$. Then
$$
\frac{f'(t)}{g'(t)}
=\frac{\alpha t^{\alpha-1}(1-t)}{1-t^\alpha}
=\alpha\frac{1-t}{t^{1-\alpha}-t}.
$$
Moreover,
$$
\frac{d}{dt}\left(\frac{1-t}{t^{1-\alpha}-t}\right)
=
\frac{1-t^{-\alpha}\bigl((1-\alpha)+\alpha t\bigr)}
     {(t^{1-\alpha}-t)^2}
\leq0,
$$
where the last inequality follows from the weighted AM--GM inequality
$t^\alpha\leq(1-\alpha)+\alpha t$. Thus $\frac{f'}{g'}$ is decreasing. Since $f(0)=g(0)=0$, the quotient $\frac{f}{g}=\Phi$ is also decreasing; indeed, it is a weighted average of $\frac{f'}{g'}$ on $[0,t]$ with weight $g'$. Finally,
$$
\Phi(p)=\frac{\log(1-p^\alpha)}{\log(1-p)}
=\frac{\log(1-q)}{\log(1-p)}=\beta.
$$
The monotonicity of $\Phi$ is equivalent to~\cref{clm:power-star-crossover}.

We use the claim to compare the three formulas. On $[0,p^2]$, the first branch of $\Theta_{p,q}$ is exactly $x^\alpha$. If $p^2\leq x\leq p$, then
$\frac{x}{p}\in[p,1]$, and~\cref{clm:power-star-crossover} gives
$$
\Theta_{p,q}(x)=qU\left(\frac{x}{p}\right) \geq q\left(\frac{x}{p}\right)^\alpha=x^\alpha.
$$
If $p\leq x\leq1$, the same relation gives
$\Theta_{p,q}(x)=U(x)\geq x^\alpha$. This proves {\rm (i)}.

For {\rm (ii)}, equality holds on $[p^2,p]$. If $0\leq x\leq p^2$,
then $\frac{x}{p}\leq p$, and \cref{clm:power-star-crossover} gives
$$
qU\left(\frac{x}{p}\right)\leq q\left(\frac{x}{p}\right)^\alpha=x^\alpha=\Theta_{p,q}(x).
$$
Finally, the third comparison is an equality on $[p,1]$. For $0\leq x\leq p$, \cref{clm:power-star-crossover} gives $U(x)\leq x^\alpha$, and part {\rm (i)} gives $\Theta_{p,q}(x)\geq x^\alpha$. Hence $\Theta_{p,q}(x)\geq U(x)$, proving {\rm (iii)}.
\end{proof}

\section{Upper-shadow comparison}\label{sec:shadow}

\subsection{The two-point inequality}\label{subsec:two-point}

We begin with the scalar inequality that combines the two inductive
estimates.

\begin{lemma}[Two-point comparison for adjacent slices]\label{lem:slice-two-point}
Let $1\leq k<\ell<n$, and define
$$p=\frac kn,\quad q=\frac\ell n,\quad
p_0=\frac{k}{n-1},\quad q_0=\frac{\ell}{n-1},\quad
p_1=\frac{k-1}{n-1},\quad q_1=\frac{\ell-1}{n-1}.$$
Then, for $0\leq x_0\leq x_1\leq1$,
\begin{equation}\label{ineq:slice-two-point}
(1-q)\Theta_{p_0,q_0}(x_0)+q\Theta_{p_1,q_1}(x_1)\geq
\Theta_{p,q}\bigl((1-p)x_0+px_1\bigr).
\end{equation}
Moreover, if $(1-p)x_0+px_1=p$, then equality in \eqref{ineq:slice-two-point} holds if and only if $x_0=0$ and $x_1=1$.
\end{lemma}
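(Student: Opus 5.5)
The plan is to split according to which branch of $\Theta_{p,q}$ is active at the point $x:=(1-p)x_0+px_1$. In each case I replace the two left-hand terms by a single lower bound of the same branch type, which is legitimate by \cref{lem:theta-comparison}. I then combine them using \cref{lem:uniform-exponent-comparison,lem:two-point-powers}.

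Set $\alpha=\log q/\log p$, $\beta=\log(1-q)/\log(1-p)$, and define $\alpha_0,\beta_0,\alpha_1,\beta_1$ analogously for the pairs $(p_0,q_0)$ and $(p_1,q_1)$. Several exact identities drive the computation:
\[
1-p_0=\frac{1-q}{1-q_0}\cdot\frac{n-k-1}{n-\ell-1}\cdot\ldots,\qquad p_1=\frac{p-\delta}{1-\delta},\quad q_1=\frac{q-\delta}{1-\delta},\quad p_0=\frac{p}{1-\delta},\quad q_0=\frac{q}{1-\delta},
\]
with $\delta=1/n$. \cref{lem:uniform-exponent-comparison}(i) then gives $\alpha_1\ge\alpha$ after applying $\log$ to the ratio; more carefully, $\log q_1/\log p_1\le \log q/\log p$ once the roles of $a,b$ are matched. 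Likewise, part (ii) compares $\alpha_0$ with $\alpha$. The analogous statements for $\beta_0,\beta_1$ follow by applying the lemma to complements $1-p_0=(1-p-\delta)/(1-\delta)$ and so on. The first step is to pin down exactly which direction of each exponent comparison I need, namely that the left-hand side uses exponents no worse than those on the right.

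\textbf{Third branch, $x\ge p$.} I bound each $\Theta_{p_j,q_j}(x_j)\ge 1-(1-x_j)^{\beta_j}$ using \cref{lem:theta-comparison}(iii). After the exponent comparison it suffices to show
\[
(1-q)(1-x_0)^{\beta}+q(1-x_1)^{\beta}\le (1-x)^\beta .
\]
With weights $1-p$ and $p$ inside, this is a convexity-type two-point estimate. I would normalize by $1-x_0$ and then use \cref{lem:two-point-powers}(i) with $t=p$ (or $1-p$), where $(1-p)^\beta=1-q$ converts the outer weights.

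\textbf{First branch, $x\le p^2$.} Here I use $x_j^{\alpha_j}$. I factor out $x_1^\alpha$, write $z=x_0/x_1\le1$, and set $t$ so that $q=p^\alpha$. The needed inequality $(1-q)z^\alpha+q\ge((1-p)z+p)^\alpha$ is then precisely \cref{lem:two-point-powers}(ii) with $\beta\to\alpha\le1$, using $(1-p)+p$ weights.

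\textbf{Middle branch, $p^2\le x\le p$.} I expect this to be the main obstacle. The natural choice is to take $x_0$ in its first branch or zero, and $x_1/p_1$ in the third branch, with the middle branch reflecting exactly the decomposition $\F_1$ large, $\F_0$ small. I would try bounding $\Theta_{p_0,q_0}(x_0)\ge x_0^{\alpha_0}\ge0$ and $\Theta_{p_1,q_1}(x_1)\ge 1-(1-x_1)^{\beta_1}$. The target is $q[1-(1-x/p)^\beta]$, and since $x/p=x_1+\frac{1-p}{p}x_0$, I must absorb the $x_0$ contribution. When that fails I would use a convex combination of the lower bounds and a case split on whether $x_1\ge p$. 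The hypothesis $x_0\le x_1$ is needed here.

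\textbf{Equality at $x=p$.} On the third branch, strictness in \cref{lem:two-point-powers} (for $\gamma>1$, $F>0$ on $(0,1)$) and strictness of the exponent comparison for $\delta>0$ force the extreme configuration $x_0=0$, $x_1=1$. Conversely, that configuration gives $q\cdot1=\Theta_{p,q}(p)=q$.
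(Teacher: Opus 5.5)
There is a genuine gap, and it sits in the middle range $p^2\le x\le p$, which is the core of the lemma. The bounds you propose there provably fail. You suggest $\Theta_{p_0,q_0}(x_0)\ge x_0^{\alpha_0}$ (or $0$) and $\Theta_{p_1,q_1}(x_1)\ge 1-(1-x_1)^{\beta_1}$, or a convex combination of first- and third-branch bounds, which is at most the larger of the two. All of these lose a fixed amount near the diagonal $x_0=x_1$, where \eqref{ineq:slice-two-point} is asymptotically an equality.

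Concretely, take $n=1000$, $k=300$, $\ell=500$ and $x_0=x_1=0.29$, so $x=0.29\in[p^2,p]$:
\begin{itemize}
\item The right-hand side is $\Theta_{0.3,0.5}(0.29)\approx0.4993$.
\item Your bound $(1-q)x_0^{\alpha_0}+q\bigl(1-(1-x_1)^{\beta_1}\bigr)$ is only about $0.4885$.
\item Taking the better of the first- and third-branch bounds in each term gives about $0.4905$.
\item Even using the middle-branch bound for $x_1$ while keeping $x_0^{\alpha_0}$ gives only about $0.4947$.
\end{itemize}
As $n\to\infty$ with $x_0=x_1=x\in(p^2,p)$, the left-hand side tends to $\Theta_{p,q}(x)$. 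The best first/third-branch bound tends to $x^{\alpha}$, which is strictly smaller ($0.4903$ versus $0.4993$ here). So no $O(1/n)$ refinement rescues this route. Your heuristic that the middle range means ``$x_0$ small, $x_1$ large'' is accurate only at the extremal point $x_0=0$, $x_1=1$.

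The missing idea has three parts.
\begin{itemize}
\item Use the middle-branch bound $\Theta_{p_i,q_i}(x_i)\ge q_i\bigl[1-(1-x_i/p_i)^{\beta_i}\bigr]$ from \cref{lem:theta-comparison}(ii). Apply it to $x_0$ always, since $x_0\le x\le p<p_0$, and to $x_1$ when $x_1\le p_1$.
\item Combine it with the weight identities $(1-q)q_0=q(1-q_1)$ and $(1-p)p_0=p(1-p_1)$. These let you factor out $q$ and write $1-x/p=(1-p_1)(1-x_0/p_0)+p_1(1-x_1/p_1)$. Since $1-q_1\le(1-p_1)^\beta$ and $1-x_0/p_0\ge1-x_1/p_1$, you may trade the weights $(1-q_1,q_1)$ for $\bigl((1-p_1)^\beta,1-(1-p_1)^\beta\bigr)$. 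Then \cref{lem:two-point-powers}(i) with $t=1-p_1$ finishes.
\item When $x_1\ge p_1$, pair the middle bound for $x_0$ with the third-branch bound for $x_1$. After using $1-q_1\le(1-p_1)^\beta$, the claim reduces to $(1-x_0/p_0)^\beta+\bigl((1-x_1)/(1-p_1)\bigr)^\beta\le1+\bigl(\frac{1-x_1}{1-p_1}-\frac{x_0}{p_0}\bigr)^\beta$. This follows from convexity of $t\mapsto t^\beta+(s-t)^\beta$ on $[s-1,1]$, using $1-x/p=(1-p_1)\bigl(\frac{1-x_1}{1-p_1}-\frac{x_0}{p_0}\bigr)\ge0$.
\end{itemize}

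The rest of your plan is sound.
\begin{itemize}
\item Your first range matches the paper's argument.
\item In the third range, applying \cref{lem:theta-comparison}(iii) to both terms, with $t=1-p$ rather than $t=p$, works for every $x_0$. So the paper's split according to whether $x_0\ge p_0$ is unnecessary.
\item That same chain yields the equality statement for $1<k<\ell<n-1$. \cref{lem:two-point-powers}(i) is strict for $z\in(0,1)$, and strict exponent comparison excludes $x_0=x_1=p$.
\end{itemize}

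Three smaller corrections remain. The correct directions are $\alpha_0,\alpha_1\le\alpha$ and $\beta_0,\beta_1\ge\beta$. Your displayed formula for $1-p_0$ should read $(1-p-\delta)/(1-\delta)$. The cases $k=1$ and $\ell=n-1$ must be treated separately, since there $p_1=0$ or $q_0=1$ and the auxiliary lemmas do not apply.
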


The parameter pairs $(p_0,q_0)$ and $(p_1,q_1)$ differ, and the two
levels use different averaging weights. The proof treats the three
formulas in \eqref{eq:theta-profile} separately.

\begin{proof}[Proof of \cref{lem:slice-two-point}]
The argument below remains valid when $k$ and $\ell$ are regarded
as real parameters satisfying $1<k<\ell<n-1$. Under the endpoint
convention \eqref{eq:theta-profile-endpoints}, both sides of
\eqref{ineq:slice-two-point} are continuous as $k\downarrow1$ or
$\ell\uparrow n-1$. Hence it suffices to prove the inequality for
$1<k<\ell<n-1$. Put
$$
\alpha=\frac{\log q}{\log p},
\qquad
\beta=\frac{\log(1-q)}{\log(1-p)},
$$
and define $\alpha_i=\frac{\log q_i}{\log p_i}$ and $\beta_i=\frac{\log(1-q_i)}{\log(1-p_i)}$ for $i\in\{0,1\}$. Applying
\cref{lem:uniform-exponent-comparison} to $p,q$ and to
$1-q,1-p$ gives
$$
\alpha_0,\alpha_1\leq\alpha,
\qquad
\beta_0,\beta_1\geq\beta.
$$
For $\alpha_0$ and $\alpha_1$, use parts~{\rm (ii)} and~{\rm (i)},
respectively, and for $\beta_0$ and $\beta_1$,
use parts~{\rm (i)} and~{\rm (ii)}, respectively.
In particular,
$$
1-q_i=(1-p_i)^{\beta_i}\leq(1-p_i)^\beta,
\quad i\in\{0,1\}.
$$
We shall also use
$$
p_1<p<p_0,
\quad
(1-q)q_0=q(1-q_1),
\quad
(1-p)p_0=p(1-p_1).
$$
Set $x=(1-p)x_0+px_1$.

\medskip
\noindent\textbf{Case 1: $0\leq x\leq p^2$.} By \cref{lem:theta-comparison}, $$
\Theta_{p_0,q_0}(x_0)\geq x_0^{\alpha_0}\geq x_0^\alpha,
\quad
\Theta_{p_1,q_1}(x_1)\geq x_1^{\alpha_1}\geq x_1^\alpha.$$
If $x_1=0$, then $x_0=x=0$ and the required power bound is immediate. If $x_1>0$, \cref{lem:two-point-powers} (ii), applied with
$t=p$, $z=x_0/x_1$, and exponent $\alpha$, gives
$$\begin{aligned}
(1-q)\Theta_{p_0,q_0}(x_0)+q\Theta_{p_1,q_1}(x_1)\geq(1-q)x_0^\alpha+qx_1^\alpha\geq\bigl((1-p)x_0+px_1\bigr)^\alpha
=x^\alpha.
\end{aligned}$$

\medskip
\noindent\textbf{Case 2: $p^2\leq x\leq p$.} Since $x_0\leq x\leq p<p_0$, we always have $x_0<p_0$.
We distinguish two positions of $x_1$ relative to $p_1$.

\smallskip
\noindent\emph{Subcase 2.1: $x_1\leq p_1$.}
By \cref{lem:theta-comparison},
$$\Theta_{p_0,q_0}(x_0)\geq q_0\left(1-\left(1-\frac{x_0}{p_0}\right)^{\beta_0}\right),
\quad
\Theta_{p_1,q_1}(x_1)\geq q_1\left(1-\left(1-\frac{x_1}{p_1}\right)^{\beta_1}\right).$$
Using the
identity $(1-q)q_0=q(1-q_1)$, $1-q_1\leq(1-p_1)^\beta$, \cref{lem:two-point-powers}(i), and $(1-p)p_0=p(1-p_1)$, we obtain
$$\begin{aligned}
&(1-q)\Theta_{p_0,q_0}(x_0)+q\Theta_{p_1,q_1}(x_1)\geq
q\left[1-(1-q_1)\left(1-\frac{x_0}{p_0}\right)^\beta-q_1\left(1-\frac{x_1}{p_1}\right)^\beta\right] \\
& \geq q\left[1-(1-p_1)^\beta \left(1-\frac{x_0}{p_0}\right)^\beta
-\left(1-(1-p_1)^\beta\right)\left(1-\frac{x_1}{p_1}\right)^\beta\right] \\
&\geq q\left[1-\left((1-p_1)\left(1-\frac{x_0}{p_0}\right)+p_1\left(1-\frac{x_1}{p_1}\right)\right)^\beta\right]=q\left[1-\left(1-\frac{x}{p}\right)^\beta\right]=\Theta_{p,q}(x).
\end{aligned}
$$
The second inequality follows from
$\bigl((1-p_1)^\beta-(1-q_1)\bigr)\left(\left(1-\frac{x_0}{p_0}\right)^\beta-\left(1-\frac{x_1}{p_1}\right)^\beta\right)\geq0$,
and the third is \cref{lem:two-point-powers}(i), applied with
$t=1-p_1$ and $z=\left(1-\frac{x_1}{p_1}\right)/\left(1-\frac{x_0}{p_0}\right)$.

\smallskip
\noindent\emph{Subcase 2.2: $x_1\geq p_1$.}
By \cref{lem:theta-comparison},
$$\Theta_{p_0,q_0}(x_0)\geq q_0\left(1-\left(1-\frac{x_0}{p_0}\right)^{\beta_0}\right),
\quad
\Theta_{p_1,q_1}(x_1)\geq 1-\left(1-x_1\right)^{\beta_1}.$$
Using the identity $(1-q)q_0=q(1-q_1)$, we obtain
\begin{equation}\label{eq:two-point-mixed-bound}
    (1-q)\Theta_{p_0,q_0}(x_0)+q\Theta_{p_1,q_1}(x_1)\geq q+q(1-q_1)\left(1-\left(1-\frac{x_0}{p_0}\right)^\beta-\left(\frac{1-x_1}{1-p_1}\right)^\beta\right).
\end{equation}
A direct calculation gives $1-\frac{x}{p}=(1-p_1)(\frac{1-x_1}{1-p_1}-\frac{x_0}{p_0}).$  Since $x\leq p$, we have $\frac{1-x_1}{1-p_1}+1-\frac{x_0}{p_0}\geq 1$.
For fixed $s=\frac{1-x_1}{1-p_1}+1-\frac{x_0}{p_0}\in[1,2]$, the convex function
$t\mapsto t^\beta+(s-t)^\beta$ attains its maximum on
$[s-1,1]$ at an endpoint. Then
\begin{equation}\label{eq:convex-endpoint-bound}
\left(1-\frac{x_0}{p_0}\right)^\beta+\left(\frac{1-x_1}{1-p_1}\right)^\beta\leq 1+\left(\frac{1-x_1}{1-p_1}-\frac{x_0}{p_0}\right)^\beta.
\end{equation}
Using $1-q_1\leq (1-p_1)^\beta$, \eqref{eq:two-point-mixed-bound}, and \eqref{eq:convex-endpoint-bound}, we have
$$(1-q)\Theta_{p_0,q_0}(x_0)+q\Theta_{p_1,q_1}(x_1)\geq q-q(1-p_1)^\beta\left(\frac{1-x_1}{1-p_1}-\frac{x_0}{p_0}\right)^\beta=q\left[1-\left(1-\frac{x}{p}\right)^\beta\right]=\Theta_{p,q}(x).$$

\medskip
\noindent\textbf{Case 3: $p\leq x\leq 1$.}
Since $x_1\geq x\geq p>p_1$, the third formula in \eqref{eq:theta-profile} applies to $\Theta_{p_1,q_1}(x_1)$. We distinguish the two possible positions of $x_0$.

\smallskip
\noindent\emph{Subcase 3.1: $x_0\geq p_0$.}
Applying \cref{lem:two-point-powers}(i) with
$t=1-p$ and $z=(1-x_1)/(1-x_0)$, we obtain
$$\begin{aligned}
&(1-q)\Theta_{p_0,q_0}(x_0)+q\Theta_{p_1,q_1}(x_1)\geq
1-(1-q)(1-x_0)^{\beta_0}-q(1-x_1)^{\beta_1}\\
&\geq1-(1-q)(1-x_0)^\beta-q(1-x_1)^\beta\geq 1-(1-x)^\beta=\Theta_{p,q}(x).
\end{aligned}$$

\smallskip
\noindent\emph{Subcase 3.2: $x_0\leq p_0$.}
Since $\beta\geq 1$,
$$\left(1-\frac{x_0}{p_0}\right)^\beta+\left(\frac{1-x_1}{1-p_1}\right)^\beta\leq \left(1-\frac{x_0}{p_0}+\frac{1-x_1}{1-p_1}\right)^\beta.$$
The same initial estimate in \eqref{eq:two-point-mixed-bound} gives
$$
\begin{aligned}
    &(1-q)\Theta_{p_0,q_0}(x_0)+q\Theta_{p_1,q_1}(x_1)\geq q+q(1-q_1)\left(1-\left(1-\frac{x_0}{p_0}+\frac{1-x_1}{1-p_1}\right)^\beta\right)\\
    &=1-(1-q)\left[(1-q_0)+q_0\left(1-\frac{x_0}{p_0}+\frac{1-x_1}{1-p_1}\right)^\beta\right]\\
    &\geq 1-(1-q)\left((1-p_0)+p_0\left(1-\frac{x_0}{p_0}+\frac{1-x_1}{1-p_1}\right)\right)^\beta=1-(1-x)^\beta=\Theta_{p,q}(x).
\end{aligned}$$
The first equality follows from $(1-q)q_0=q(1-q_1)$, the second inequality follows from $(1-q_0)\leq (1-p_0)^\beta$ and \cref{lem:two-point-powers}(i), and the second equality follows from $\frac{1-x}{1-p}=(1-p_0)+p_0\left(1-\frac{x_0}{p_0}+\frac{1-x_1}{1-p_1}\right)$.

It remains to verify the equality assertion. Suppose first that
$1<k<\ell<n-1$ and $x=p$. Since $x_0\leq x_1$, we have $x_0\leq p\leq x_1$; hence $x_0<p_0$ and $x_1>p_1$, so we are in Subcase~2.2. Since $1-\frac{x}{p}=(1-p_1)(\frac{1-x_1}{1-p_1}-\frac{x_0}{p_0})$, we have $\frac{1-x_1}{1-p_1}-\frac{x_0}{p_0}=0$. Moreover, if the left-hand side of \eqref{eq:two-point-mixed-bound} equals $q$, then $\left(1-\frac{x_0}{p_0}\right)^\beta+\left(\frac{1-x_1}{1-p_1}\right)^\beta=1$. Since $\beta>1$, this forces $\left(1-\frac{x_0}{p_0}\right)\in\{0,1\}$. If $1-\frac{x_0}{p_0}=0$, then $x_0=p_0>p=x$, contrary to $x_0\leq x$. Therefore $1-\frac{x_0}{p_0}=1$ and $\frac{1-x_1}{1-p_1}=0$, which is equivalent to $x_0=0$ and $x_1=1$.

If $k=1$, then $p_1=0$. Under $x=p$, the inequalities $x_0\leq x_1$ imply $x_1>0$. If $x_0>0$, the endpoint convention gives $\Theta_{p_1,q_1}(x_1)=1$, while $\Theta_{p_0,q_0}(x_0)>0$, so the left-hand side of \eqref{ineq:slice-two-point} is strictly larger than $q$. Hence $x_0=0$, and then $x=p$ gives $x_1=1$. Finally, suppose that $k>1$ and $\ell=n-1$. Then $q_0=1$. If $x_0>0$, the first term on the left-hand side is $1-q$, while $x_1\geq p>p_1$ and strict monotonicity of the third branch gives $\Theta_{p_1,q_1}(x_1)>q_1$. Consequently,
$$(1-q)\Theta_{p_0,q_0}(x_0)+q\Theta_{p_1,q_1}(x_1)
>(1-q)+qq_1=q.$$
Again equality forces $x_0=0$ and then $x_1=1$. Conversely, $x_0=0$ and $x_1=1$ give equality directly in every parameter range. The proof is complete.
\end{proof}

\subsection{\texorpdfstring{Proof of \cref{thm:shadow-main,cor:shadow-at-star}}
{Proof of the upper-shadow comparison}}

\begin{proof}[Proof of \cref{thm:shadow-main}]
Let $p=k/n$ and $q=\ell/n$. We argue by induction on $n$. The case
$k=\ell$ is immediate. By \cref{lem:balanced-restriction}, there is a
coordinate $i$ such that
$\mu_k(\F_0(i))\leq\mu_{k-1}(\F_1(i))$. Relabel it as $n$, and write
$$
\F_0=\{A\in\F:n\notin A\}\subseteq\binom{[n-1]}k,
\qquad
\F_1=\{A\setminus\{n\}:A\in\F,\ n\in A\}
\subseteq\binom{[n-1]}{k-1}.
$$
Then
$$
\mu_k(\F)=(1-p)\mu_k(\F_0)+p\mu_{k-1}(\F_1).
$$
Set $\F'=\partial_{k\to\ell}\F$, and define
$$
\F'_0=\{B\in\F':n\notin B\}\subseteq\binom{[n-1]}\ell,
\qquad
\F'_1=\{B\setminus\{n\}:B\in\F',\ n\in B\}
\subseteq\binom{[n-1]}{\ell-1}.
$$
Put
$$
p_0=\frac{k}{n-1},\quad q_0=\frac{\ell}{n-1},\qquad
p_1=\frac{k-1}{n-1},\quad q_1=\frac{\ell-1}{n-1}.
$$
Then
$$
\mu_\ell(\F')=(1-q)\mu_\ell(\F'_0)+q\mu_{\ell-1}(\F'_1),
$$
while
$\F'_0=\partial_{k\to\ell}\F_0$ and
$\F'_1\supseteq\partial_{k-1\to\ell-1}\F_1$ on $[n-1]$.
When $p_1>0$ and $q_0<1$, the induction hypothesis gives
$$
\mu_\ell(\F'_0)\geq
\Theta_{p_0,q_0}(\mu_k(\F_0)),
\qquad
\mu_{\ell-1}(\F'_1)\geq
\Theta_{p_1,q_1}(\mu_{k-1}(\F_1)).
$$
If $p_1=0$ or $q_0=1$, the corresponding inequality follows directly
from \eqref{eq:theta-profile-endpoints}.
Since $\mu_k(\F_0)\leq\mu_{k-1}(\F_1)$,
\cref{lem:slice-two-point} yields
$$
\begin{aligned}
\mu_\ell(\F')
&\geq(1-q)\Theta_{p_0,q_0}(\mu_k(\F_0))
+q\Theta_{p_1,q_1}(\mu_{k-1}(\F_1))\\
&\geq\Theta_{p,q}\bigl(
(1-p)\mu_k(\F_0)+p\mu_{k-1}(\F_1)
\bigr)=\Theta_{p,q}(\mu_k(\F)).
\end{aligned}
$$
This proves \cref{thm:shadow-main}.
\end{proof}

\begin{proof}[Proof of~\cref{cor:shadow-at-star}]
The inequality follows from~\cref{thm:shadow-main} and~\cref{lem:theta-comparison}(i).

Suppose that $k<\ell$, $\mu_k(\F)=p$, and equality holds in
\eqref{eq:uniform-power-comparison}. Set
$\F'=\partial_{k\to\ell}\F$. Since $\Theta_{p,q}(p)=q$, equality also
holds in \eqref{ineq:upper-shadow-theta}. In the inductive chain in the
proof of \cref{thm:shadow-main},
$$
q=\mu_\ell(\F')
\geq(1-q)\Theta_{p_0,q_0}(\mu_k(\F_0))
+q\Theta_{p_1,q_1}(\mu_{k-1}(\F_1))
\geq\Theta_{p,q}(p)=q.
$$
Hence equality holds in \eqref{ineq:slice-two-point}. Its equality
statement gives $\mu_k(\F_0)=0$ and
$\mu_{k-1}(\F_1)=1$. Therefore
$\F_0=\varnothing$ and
$\F_1=\binom{[n-1]}{k-1}$, so $\F$ is the full $1$-star centered at
the chosen coordinate. Conversely, a full $1$-star has density $p$
and its upper shadow is the corresponding full $1$-star of density
$q$.
\end{proof}

\subsection{\texorpdfstring{Proof of~\cref{thm:biased-shadow-comparison}}
{Proof of the biased-measure comparison}}
\label{subsec:product-limit}

\begin{proof}[Proof of~\cref{thm:biased-shadow-comparison}]
The case $p=q$ is immediate, since $\Theta_{p,p}(x)=x$ for every $x\in [0,1]$. The endpoint cases are also immediate. If $p=0<q$, then either $\varnothing\in\A$, in which case $\A=2^{[n]}$, or $\mu_0(\A)=0$; if $q=1$ and $p<1$, then an increasing family is nonempty if and only if it contains $[n]$. These alternatives agree exactly with \eqref{eq:theta-profile-endpoints}. We may therefore assume that $0<p<q<1$, and regard
$\A$ as an increasing family on a fixed ground set $[m]$. For every sufficiently large $N$, choose integers $m\leq K_N< L_N<N$ such that
$p_N:=\frac{K_N}{N}\to p$ and $q_N:=\frac{L_N}{N}\to q$ as $N\to \infty$. For $t\in \{K_N,L_N\}$, define
$$\widehat{\F}_N^{(t)}=\left\{A\in\binom{[N]}{t}:A\cap[m]\in\A\right\}.$$

We first claim that
\begin{equation}\label{eq:lifted-shadow-identity}
    \partial_{K_N\to L_N}\widehat{\F}_N^{(K_N)}=\widehat{\F}_N^{(L_N)}.
\end{equation}
If $B\in \partial_{K_N\to L_N}\widehat{\F}_N^{(K_N)}$, then $A\subseteq B$ for some $A\in \widehat{\F}_N^{(K_N)}$. Consequently, $A\cap [m]\subseteq B\cap [m]$ and the fact that $\A$ is increasing gives $B\cap [m]\in \A$. Thus $B\in\widehat{\F}_N^{(L_N)}$. Conversely, let $B\in \widehat{\F}_N^{(L_N)}$, and set $S=B\cap [m]\in \A$. Since $K_N\geq m\geq |S|$ and $K_N\leq L_N$, we may choose $T\in \binom{B\setminus [m]}{K_N-|S|}$, and set $A=S\cup T$. Then $A\subseteq B$, $|A|=K_N$, and $A\cap [m]=S\in \A$, proving the reverse inclusion.

We next compute the limiting densities of the families $\widehat{\F}_N^{(t)}$ for $t\in \{K_N,L_N\}$. Whenever $t/N\to \rho$,
$$\mu_t(\widehat{\F}_N^{(t)})=\sum_{S\in \A}\frac{\binom{N-m}{t-|S|}}{\binom{N}{t}}\to \sum_{S\in \A}\rho^{|S|}(1-\rho)^{m-|S|}=\mu_{\rho}(\A).$$
Hence,
$$\mu_{K_N}(\widehat{\F}_N^{(K_N)})\to \mu_p(\A), \quad \mu_{L_N}(\widehat{\F}_N^{(L_N)})\to \mu_q(\A).$$
Applying \cref{thm:shadow-main} to $\widehat{\F}_N^{(K_N)}$ and using \eqref{eq:lifted-shadow-identity}, we obtain
$$\mu_{L_N}\bigl(\widehat{\F}_N^{(L_N)}\bigr)
=\mu_{L_N}\left(\partial_{K_N\to L_N}\widehat{\F}_N^{(K_N)}\right)
\geq\Theta_{p_N,q_N}\left(\mu_{K_N}\bigl(\widehat{\F}_N^{(K_N)}\bigr)\right).$$
Letting $N\to\infty$ and using the joint continuity of $\Theta$ in the interior parameter range gives
$\mu_q(\A)\geq\Theta_{p,q}(\mu_p(\A))$.
The remaining inequality \eqref{eq:biased-power-comparison}, including the stated equality case, follows from \cite[Lemma 3.2]{CLL2026}.
\end{proof}

\subsection{Lower bounds for total influence}

For an increasing family $\A$, define its \emph{$p$-biased total influence} by
$$
{\mathrm I}^p(\A)=\sum_{i=1}^n\mathbb E\left[\mathbbm{1}_{\A}(A\cup\{i\})-\mathbbm{1}_{\A}(A)\right],
$$
where $A$ is a $p$-biased subset of $[n]\setminus\{i\}$. With this normalization, Margulis--Russo formula~\cite{Margulis1974,Russo1982} gives ${\mathrm I}^p(\A)=\frac{d}{dp}\mu_p(\A)$. Differentiating \cref{thm:biased-shadow-comparison} from the right at $q=p$ gives the following bound.

\begin{proposition}
\label{prop:infinitesimal-star-comparison}
Let $\A\subseteq2^{[n]}$ be increasing, let $0<p<1$, and set
$x=\mu_p(\A)$. Then
\begin{equation}\label{ineq:three-branch-influence}
{\mathrm I}^p(\A)\geq
\begin{cases}
\displaystyle
\frac{x\log x}{p\log p},
&0\leq x\leq p^2,\\[3mm]
\displaystyle
\frac{x}{p}+\frac{p-x}{1-p}
\frac{\log(1-x/p)}{\log(1-p)},
&p^2\leq x\leq p,\\[4mm]
\displaystyle
\frac{1-x}{1-p}\frac{\log(1-x)}{\log(1-p)},
&p\leq x\leq1.
\end{cases}
\end{equation}
Here $0\log0$ is interpreted as $0$, and the formulas at $p^2$ and
$p$ are interpreted by continuity. The adjacent expressions agree at
both breakpoints.
\end{proposition}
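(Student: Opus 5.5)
The plan is to differentiate \cref{thm:biased-shadow-comparison} from the right at $q=p$. Fix $0<p<1$ and an increasing $\A$, and set $x=\mu_p(\A)$. For every $q\in(p,1)$ we have $\mu_q(\A)\geq\Theta_{p,q}(x)$, and at $q=p$ both sides equal $x$, because $\Theta_{p,p}(x)=x$. Subtracting $x$, dividing by $q-p>0$ and letting $q\downarrow p$ gives
\[
{\mathrm I}^p(\A)=\frac{d}{dq}\mu_q(\A)\Big|_{q=p}\geq\frac{\partial}{\partial q}\Theta_{p,q}(x)\Big|_{q=p^+}.
\]
The left side is a genuine derivative of a polynomial in $q$, so it equals ${\mathrm I}^p(\A)$ by the Margulis--Russo formula. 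It therefore suffices to compute the right-sided $q$-derivative of each branch at $q=p$ for fixed $x$.

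For the first branch, $\frac{d}{dq}x^{\log q/\log p}=x^{\alpha}\log x\cdot\frac{1}{q\log p}$, which equals $\frac{x\log x}{p\log p}$ at $q=p$. For the third branch, with $\beta(q)=\log(1-q)/\log(1-p)$ we have $\beta'(p)=-\frac{1}{(1-p)\log(1-p)}$, and differentiating $1-(1-x)^{\beta}$ gives $\frac{(1-x)\log(1-x)}{(1-p)\log(1-p)}$. For the middle branch, apply the product rule to $q\,[1-(1-x/p)^{\beta}]$. The first term gives $1-(1-x/p)=x/p$, and the second gives $p\,(1-x/p)\log(1-x/p)\cdot\frac{1}{(1-p)\log(1-p)}=\frac{p-x}{1-p}\frac{\log(1-x/p)}{\log(1-p)}$. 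These match the three claimed expressions.

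The only subtle point, which I expect to be the main obstacle, is the dependence of the branch regions on $p$ alone. The breakpoints $p^2$ and $p$ do not move with $q$, so for $x$ in the interior of a branch the same formula applies for all $q$ near $p$. At the breakpoints $x=p^2$ and $x=p$, I use that $\Theta_{p,q}(x)$ is given by either adjacent formula, since the formulas agree there for every $q$; hence either derivative is valid. I still check that the adjacent expressions coincide, as the statement asserts. At $x=p$ the middle formula gives $1+0=1$ (since $(1-t)\log(1-t)\to0$) and the third gives $1$. At $x=p^2$ the first gives $p\cdot 2=2p$, and the middle gives $p+(p-p^2)/(1-p)\cdot 1=2p$.

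The degenerate case $x=0$ uses the convention $0\log0=0$ and is trivial because ${\mathrm I}^p\geq0$. Uniform convergence is not an issue, since each branch is smooth in $q$ for fixed $x$ in the interior of the parameter range.
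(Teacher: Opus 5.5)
Your proposal is correct and is essentially the paper's proof: you differentiate $\mu_q(\A)\geq\Theta_{p,q}(x)$ from the right at $q=p$ via the Margulis--Russo formula, use that the branch intervals depend only on $p$, and compute the three $q$-derivatives, all of which match. The endpoint $x=1$, where the right-hand side is $0$ by the $0\log 0$ convention, is handled exactly like your $x=0$ case; the paper disposes of both together by noting that $\A\in\{\varnothing,2^{[n]}\}$ there.
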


\begin{proof}
If $x=0$ or $x=1$, then $\A$ is $\varnothing$ or $2^{[n]}$ and the assertion is immediate. Assume that $0<x<1$. With $p$ fixed, \cref{thm:biased-shadow-comparison} and Margulis--Russo formula give
$$
{\mathrm I}^p(\A)=\lim_{q\downarrow p}\frac{\mu_q(\A)-\mu_p(\A)}{q-p}\geq
\left.\frac{\partial}{\partial q}\Theta_{p,q}(x)\right|_{q=p}.
$$
The three branch intervals in \eqref{eq:theta-profile} depend only on
$p$, so the derivative can be computed on each interval separately.
For $0<x\leq p^2$,
$$
\left.\frac{\partial}{\partial q}
x^{\frac{\log q}{\log p}}\right|_{q=p}
=
\left.x^{\frac{\log q}{\log p}}
\frac{\log x}{q\log p}\right|_{q=p}
=
\frac{x\log x}{p\log p}.
$$
For $p^2\leq x<p$,
$$
\begin{aligned}
\left.\frac{\partial}{\partial q}
q\left[1-\left(1-\frac{x}{p}\right)^{\frac{\log(1-q)}{\log(1-p)}}\right]\right|_{q=p}&=
\left.1-\left(1-\frac{x}{p}\right)^{\frac{\log(1-q)}{\log(1-p)}}+\frac{q\left(1-\frac{x}{p}\right)^{\frac{\log(1-q)}{\log(1-p)}}\log\left(1-\frac{x}{p}\right)}{(1-q)\log(1-p)}\right|_{q=p}\\
&=\frac{x}{p}+\frac{p-x}{1-p}
\frac{\log\left(1-\frac{x}{p}\right)}{\log(1-p)}.
\end{aligned}
$$
Finally, for $p\leq x<1$,
$$
\left.\frac{\partial}{\partial q}
\left[1-(1-x)^{\frac{\log(1-q)}{\log(1-p)}}\right]\right|_{q=p}=\left.
\frac{(1-x)^{\frac{\log(1-q)}{\log(1-p)}}\log(1-x)}{(1-q)\log(1-p)}\right|_{q=p}=\frac{1-x}{1-p}\frac{\log(1-x)}{\log(1-p)}.
$$
Substitution into the three derivatives gives~\eqref{ineq:three-branch-influence}. At $x=p^2$, the first two expressions equal $2p$, and at $x=p$, the last two equal $1$.
\end{proof}

The same families that attain the three branches of~\eqref{ineq:biased-theta} attain the corresponding bounds in~\eqref{ineq:three-branch-influence}. The families $\{A\subseteq[n]:S\subseteq A\}$ with $|S|\geq2$ attain the first
line. The families $\{A\subseteq[n]:j\in A,\ A\cap S\neq\varnothing\}$ with $j\notin S$ and $S\neq\varnothing$ attain the second line, and the families $\{A\subseteq[n]:A\cap S\neq\varnothing\}$ with $S\neq\varnothing$ attain the third line.

\begin{remark}
For comparison, we recall the exact biased edge-isoperimetric theorem. Let $\mu_p^{(\mathbb N)}$ be Bernoulli product measure on $2^{\mathbb N}$, and let ${\mathrm I}_p^{(\mathbb N)}$ denote the sum of the coordinate influences on this infinite product cube. We use on $2^{\mathbb N}$ the \emph{lexicographic order} already defined on the finite slices: for distinct $A,B\subseteq\mathbb N$, we put $A<_{\rm lex}B$ when the least element of $A\mathbin{\triangle}B$ belongs to $A$. For $\lambda\in[0,1]$, let $\mathcal L_\lambda$ be the measurable initial segment of this order with $\mu_{1/2}^{(\mathbb N)}(\mathcal L_\lambda)=\lambda$.  Given $0<p<1$ and $x\in[0,1]$, choose $\lambda_p(x)$ so that
$\mu_p^{(\mathbb N)}(\mathcal L_{\lambda_p(x)})=x$. The exact biased edge-isoperimetric profile is
\begin{equation}\label{eq:exact-biased-profile}
\mathcal I_p(x):=\inf_{\substack{\A\subseteq2^{\mathbb N}\ \text{measurable and increasing}\\
\mu_p^{(\mathbb N)}(\A)=x}}
{\mathrm I}_p^{(\mathbb N)}(\A)={\mathrm I}_p^{(\mathbb N)}(\mathcal L_{\lambda_p(x)}).
\end{equation}
The equality in~\eqref{eq:exact-biased-profile} is the biased
edge-isoperimetric theorem of Ellis, Keller and
Lifshitz~\cite[Theorem~1.9]{EKL2019JCTA}. In particular, every
increasing $\A\subseteq2^{[n]}$ with $\mu_p(\A)=x$ satisfies
${\mathrm I}^p(\A)\geq\mathcal I_p(x)$. \cref{prop:infinitesimal-star-comparison} gives instead the elementary
three-branch lower bound in \eqref{ineq:three-branch-influence}. It is
sharp for the three classes of families above, but for a general
value of $x$ it need not equal the lexicographic quantity in
\eqref{eq:exact-biased-profile}.
\end{remark}

\subsection{Composition and families attaining the bounds}
\label{sec:profile-geometry}

Upper-shadow operators compose exactly. Indeed, let $1\leq k\leq\ell\leq h\leq n$ and $\F\subseteq\binom{[n]}k$. For every $B\in\binom{[n]}h$,
$$
\begin{aligned}
B\in\partial_{\ell\to h}\bigl(\partial_{k\to\ell}\F\bigr)
&\Longleftrightarrow
\text{there exist }A\in\F\text{ and }C\in\binom{[n]}\ell\text{ such that }A\subseteq C\subseteq B\\
&\Longleftrightarrow
\text{there exists }A\in\F\text{ such that }A\subseteq B\\
&\Longleftrightarrow
B\in\partial_{k\to h}\F.
\end{aligned}
$$
For the reverse implication in the second equivalence, if $A\subseteq B$, choose $D\in\binom{B\setminus A}{\ell-k}$ and set $C=A\cup D$. Consequently,
$$
\partial_{\ell\to h}\bigl(\partial_{k\to\ell}\F\bigr)
=\partial_{k\to h}\F.
$$

The next lemma is the corresponding composition identity for $\Theta$.

\begin{lemma}[Composition identity]\label{lem:profile-semigroup}
Let $0<p\leq q\leq r<1$. For every $x\in[0,1]$,
$$
\Theta_{q,r}\bigl(\Theta_{p,q}(x)\bigr)
=\Theta_{p,r}(x).
$$
\end{lemma}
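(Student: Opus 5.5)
Set $\alpha_{p,q}=\log q/\log p$ and $\beta_{p,q}=\log(1-q)/\log(1-p)$. These exponents compose multiplicatively:
$$
\alpha_{q,r}\alpha_{p,q}=\alpha_{p,r},\qquad \beta_{q,r}\beta_{p,q}=\beta_{p,r}.
$$
Both identities are immediate, since the ratios of logarithms telescope. The plan is to show that $\Theta_{p,q}$ maps each of the three branch intervals for $(p,q)$ into the corresponding branch interval for $(q,r)$. Once that is established, the identity reduces to composing matching formulas. The degenerate cases $p=q$ or $q=r$ are trivial because $\Theta_{s,s}(x)=x$, so assume $p<q<r$.

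\textbf{Step 1 (branch intervals).} Since $\Theta_{p,q}$ is continuous and increasing, it suffices to evaluate it at the breakpoints. We have $\Theta_{p,q}(p^2)=(p^2)^{\alpha_{p,q}}=q^2$ and $\Theta_{p,q}(p)=q\bigl[1-0\bigr]=q$, together with $\Theta_{p,q}(0)=0$ and $\Theta_{p,q}(1)=1$. Hence $\Theta_{p,q}$ maps $[0,p^2]$ onto $[0,q^2]$, maps $[p^2,p]$ onto $[q^2,q]$, and maps $[p,1]$ onto $[q,1]$.

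\textbf{Step 2 (composition on each branch).} On the first branch,
$$
\bigl(x^{\alpha_{p,q}}\bigr)^{\alpha_{q,r}}=x^{\alpha_{p,r}}.
$$
On the third branch, put $y=1-(1-x)^{\beta_{p,q}}$. Then
$$
1-(1-y)^{\beta_{q,r}}=1-(1-x)^{\beta_{p,q}\beta_{q,r}}=1-(1-x)^{\beta_{p,r}}.
$$
On the middle branch, put $y=q\bigl[1-(1-x/p)^{\beta_{p,q}}\bigr]$. Then $y/q=1-(1-x/p)^{\beta_{p,q}}$, so $1-y/q=(1-x/p)^{\beta_{p,q}}$. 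Applying the middle formula for $(q,r)$ gives
$$
r\bigl[1-(1-y/q)^{\beta_{q,r}}\bigr]=r\bigl[1-(1-x/p)^{\beta_{p,r}}\bigr]=\Theta_{p,r}(x).
$$
At the breakpoints, the value is unambiguous because the adjacent formulas agree there.

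There is no real obstacle in this argument. The only point needing care is Step 1, namely that the breakpoints $p^2$ and $p$ are sent exactly to $q^2$ and $q$. This holds because the exponents were calibrated so that $p^{\alpha_{p,q}}=q$, and because the middle formula vanishes at $x=p$.
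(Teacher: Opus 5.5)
Your proposal is correct and follows essentially the same route as the paper: it shows that $\Theta_{p,q}$ carries $[0,p^2]$, $[p^2,p]$, $[p,1]$ onto $[0,q^2]$, $[q^2,q]$, $[q,1]$, and then composes the matching branch formulas using the multiplicativity of the exponents. Your separate treatment of $p=q$ or $q=r$ is harmless but unnecessary, since the branch formulas with exponents equal to $1$ already handle those cases.
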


\begin{proof}
Direct substitution in the three formulas for $\Theta_{p,q}$ gives
$$\Theta_{p,q}([0,p^2])=[0,q^2],\quad
\Theta_{p,q}([p^2,p])=[q^2,q],\quad
\Theta_{p,q}([p,1])=[q,1].$$
Hence, under the composition $\Theta_{q,r}\circ\Theta_{p,q}$, each of the three branch intervals for $\Theta_{p,q}$ is mapped into the corresponding branch interval for $\Theta_{q,r}$. We therefore consider the three cases separately.

If $0\leq x\leq p^2$, then $\Theta_{p,q}(x)\in[0,q^2]$, and hence
$$
\Theta_{q,r}(\Theta_{p,q}(x))
=\left(x^{\frac{\log q}{\log p}}\right)^{
\frac{\log r}{\log q}}
=x^{\frac{\log r}{\log p}}.
$$
If $p^2\leq x\leq p$, then $\Theta_{p,q}(x)\in[q^2,q]$ and
$$
\begin{aligned}
\Theta_{q,r}(\Theta_{p,q}(x))
=r\left[1-\left(1-\frac{\Theta_{p,q}(x)}q\right)^{
\frac{\log(1-r)}{\log(1-q)}}\right]=r\left[1-\left(1-\frac{x}{p}\right)^{
\frac{\log(1-r)}{\log(1-p)}}\right].
\end{aligned}
$$
Finally, if $p\leq x\leq1$, then $\Theta_{p,q}(x)\in[q,1]$ and
$$
\Theta_{q,r}(\Theta_{p,q}(x))
=1-\left((1-x)^{\frac{\log(1-q)}{\log(1-p)}}\right)^{
\frac{\log(1-r)}{\log(1-q)}}
=1-(1-x)^{\frac{\log(1-r)}{\log(1-p)}}.
$$
These are precisely the three formulas defining $\Theta_{p,r}(x)$.
\end{proof}

Thus successive applications of \eqref{ineq:upper-shadow-theta} for
$(k,\ell)$ and $(\ell,h)$ recover the inequality for $(k,h)$.

We now give three families for which \cref{thm:shadow-main} is
asymptotically sharp. Fix an integer $t\geq1$.  For $n\geq t+1$, define
$$
\begin{aligned}
&\mathcal S_{n,k}^{(t)}=\left\{A\in\binom{[n]}k:[t]\subseteq A\right\}, \quad  \mathcal V_{n,k}^{(t)}=\left\{A\in\binom{[n]}k:1\in A,\ A\cap\{2,\dots,t+1\}\neq\varnothing\right\}, \\
&\mathcal U_{n,k}^{(t)}=\left\{A\in\binom{[n]}k:A\cap[t]\neq\varnothing\right\}.
\end{aligned}
$$
These are initial segments of the lexicographic order on $\binom{[n]}{k}$. Moreover, $\partial_{k\to\ell}\mathcal{S}_{n,k}^{(t)}=\mathcal{S}_{n,\ell}^{(t)}$ for $t\leq k\leq\ell$, $\partial_{k\to\ell}\mathcal{V}_{n,k}^{(t)}=\mathcal{V}_{n,\ell}^{(t)}$ for $2\leq k\leq\ell$, and $\partial_{k\to\ell}\mathcal{U}_{n,k}^{(t)}=\mathcal{U}_{n,\ell}^{(t)}$ for $1\leq k\leq\ell$.

\begin{proposition}[Asymptotic sharpness]\label{prop:sharpness-families}
Fix an integer $t\geq1$ and real numbers $0<p\leq q<1$. Let $k=k(n)\leq\ell=\ell(n)$ satisfy
$p_n=\frac{k}{n}\to p$ and $q_n=\frac{\ell}{n}\to q$. Then
$$\mu_\ell(\partial_{k\to\ell}\mathcal{S}_{n,k}^{(t)})-\Theta_{p_n,q_n}(\mu_k(\mathcal{S}_{n,k}^{(t)}))\to0,\quad\mu_\ell(\partial_{k\to\ell}\mathcal{V}_{n,k}^{(t)})-\Theta_{p_n,q_n}(\mu_k(\mathcal{V}_{n,k}^{(t)}))\to0,$$
and
$$
\mu_\ell(\partial_{k\to\ell}\mathcal{U}_{n,k}^{(t)})-\Theta_{p_n,q_n}(\mu_k(\mathcal{U}_{n,k}^{(t)}))\to0.
$$
Thus $\mathcal{S}_{n,k}^{(t)}$ for $t\geq2$, $\mathcal{V}_{n,k}^{(t)}$ for $t\geq1$, and $\mathcal{U}_{n,k}^{(t)}$ for $t\geq1$ asymptotically attain the first,
middle, and third branches, respectively.
\end{proposition}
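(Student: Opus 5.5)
The plan is to compute both densities in closed form for each family, pass to the limit using $p_n\to p$ and $q_n\to q$, and then compare with the corresponding branch of $\Theta_{p,q}$. The facts recorded just before the proposition give $\partial_{k\to\ell}\mathcal S^{(t)}_{n,k}=\mathcal S^{(t)}_{n,\ell}$, and similarly for $\mathcal V$ and $\mathcal U$, once $n$ is large; this holds because $k\to\infty$ when $p>0$. So the left-hand terms are just $\mu_\ell$ of the same-type family at level $\ell$.

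\textbf{Limiting densities.} For fixed $t$ and $j/n\to\rho$, we have
\[
\mu_j(\mathcal S^{(t)}_{n,j})=\binom{n-t}{j-t}\Big/\binom nj\to\rho^t,
\qquad
\mu_j(\mathcal U^{(t)}_{n,j})=1-\binom{n-t}{j}\Big/\binom nj\to1-(1-\rho)^t.
\]
For $\mathcal V$, write $\mathcal V^{(t)}_{n,j}$ as the family of sets containing $1$ minus those avoiding $\{2,\dots,t+1\}$. This gives
\[
\mu_j(\mathcal V^{(t)}_{n,j})\to\rho\bigl(1-(1-\rho)^t\bigr).
\]
These are ratios of products of $t$ or $t+1$ factors like $(j-i)/(n-i)$, so each converges. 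Hence $x_n:=\mu_k(\cdot)\to x\in\{p^t,\ p(1-(1-p)^t),\ 1-(1-p)^t\}$, and the shadow densities converge to the same expressions with $q$ in place of $p$.

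\textbf{Evaluation of $\Theta$.} Here I will use joint continuity of $(p',q',x)\mapsto\Theta_{p',q'}(x)$ for $0<p'\le q'<1$. This holds because each branch is continuous in all variables, the breakpoints $p'^2$ and $p'$ move continuously, and the branches agree there. It follows that $\Theta_{p_n,q_n}(x_n)\to\Theta_{p,q}(x)$, and it remains to check exact identities.

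\begin{itemize}
\item For $\mathcal S$ with $t\ge2$: $x=p^t\le p^2$, so the first branch applies and $\Theta_{p,q}(p^t)=(p^t)^{\log q/\log p}=q^t$.
\item For $\mathcal U$: $x=1-(1-p)^t\ge p$, so the third branch applies and gives $1-\bigl((1-p)^t\bigr)^{\beta}=1-(1-q)^t$, where $(1-p)^\beta=1-q$.
\item For $\mathcal V$: $x/p=1-(1-p)^t\in[p,1]$, so $x\in[p^2,p]$ and the middle branch applies. It gives $q\bigl[1-(1-p)^{t\beta}\bigr]=q\bigl(1-(1-q)^t\bigr)$.
\end{itemize}

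Each value matches the limiting shadow density, so every difference tends to $0$. The case $t=1$ for $\mathcal S$ coincides with the breakpoint $x=p$ and is covered in the same way.

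The only step needing care is the joint continuity when $x_n$ crosses a moving breakpoint. Since the branches agree at the breakpoints and each branch is uniformly continuous on compact parameter sets bounded away from $0$ and $1$, this is routine. So I expect no real obstacle beyond verifying the branch-interval memberships, which the computations above already settle.
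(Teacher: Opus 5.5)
Your proposal is correct and takes essentially the same route as the paper. Both arguments use the exact shadow identities, the limiting densities $p^t$, $p\bigl(1-(1-p)^t\bigr)$, $1-(1-p)^t$ and their $q$-analogues, evaluation of the appropriate branch of $\Theta_{p,q}$, and continuity of $\Theta$ in its parameters and argument. Your explicit justification of joint continuity, via the agreement of the branches at the moving breakpoints, is a detail the paper leaves implicit.
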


\begin{proof}
The three $k$-slice densities are $\frac{\binom{n-t}{k-t}}{\binom nk}$, $\frac{k}{n}\left(1-\frac{\binom{n-t-1}{k-1}}{\binom{n-1}{k-1}}\right)$, and $1-\frac{\binom{n-t}{k}}{\binom nk}$, respectively. Their limits are $p^t$, $p\bigl(1-(1-p)^t\bigr)$, and $1-(1-p)^t$, respectively.
The corresponding limits on the $\ell$-th level are $q^t$, $q(1-(1-q)^t)$, and $1-(1-q)^t$.  By the three branches of \eqref{eq:theta-profile},
$$\Theta_{p,q}(p^t)=q^t, \quad \Theta_{p,q}\bigl(p(1-(1-p)^t)\bigr)=q(1-(1-q)^t), \quad \Theta_{p,q}\bigl(1-(1-p)^t\bigr)=1-(1-q)^t.$$
The exact shadow identities above and continuity of $\Theta_{p,q}$ in its parameters and argument prove the three limits.
\end{proof}

\subsection{The Erd\H{o}s--Ko--Rado theorem}

As a direct application of \cref{cor:shadow-at-star}, we recover the classical Erd\H{o}s--Ko--Rado theorem.

\begin{corollary}[Erd\H{o}s--Ko--Rado \cite{EKR1961}]\label{cor:ekr-from-shadow}
Let $1\leq k\leq \frac{n}{2}$, and let
$\F\subseteq\binom{[n]}k$ be intersecting. Then
$$\mu_k(\F)\leq\frac{k}{n}.$$
If $k<n/2$, equality holds if and only if $\F$ is a $1$-star.
\end{corollary}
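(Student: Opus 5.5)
We derive the Erd\H{o}s--Ko--Rado bound from \cref{cor:shadow-at-star} with $\ell=n-k$. If $k=n/2$, the bound is trivial: $\F$ contains at most one set from each complementary pair, so $\mu_k(\F)\leq 1/2=k/n$. Hence assume $k<n/2$, so $k<\ell=n-k<n$. Put $p=k/n$ and $q=\ell/n=1-p$.

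\textbf{The key observation.} Let $\F^c=\{[n]\setminus A:A\in\F\}\subseteq\binom{[n]}{n-k}$. We claim $\partial_{k\to n-k}\F$ is disjoint from $\F^c$. Suppose $B\supseteq A$ with $A\in\F$ and $B=[n]\setminus A'$ for some $A'\in\F$. Then $A\cap A'=\varnothing$, which contradicts the intersecting property. Since $\mu_{n-k}(\F^c)=\mu_k(\F)$, we obtain
$$
\mu_{n-k}(\partial_{k\to n-k}\F)\leq 1-\mu_k(\F).
$$

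\textbf{The inequality.} Write $x=\mu_k(\F)$. By \eqref{eq:uniform-power-comparison},
$$
x^{\log(1-p)/\log p}\leq 1-x.
$$
Let $\alpha=\log(1-p)/\log p\in(0,1)$, so that $p^\alpha=1-p$. The function $g(x)=x^\alpha+x$ is strictly increasing, and $g(p)=(1-p)+p=1$. The displayed inequality says $g(x)\leq 1$, hence $x\leq p=k/n$.

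\textbf{Equality.} If $x=p$, then all the inequalities above are equalities. In particular, equality holds in \eqref{eq:uniform-power-comparison} at density $p$ with $k<\ell$. The equality clause of \cref{cor:shadow-at-star} then forces $\F$ to be a full $1$-star. Conversely, every $1$-star is intersecting and has density $k/n$.

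The only nontrivial ingredient is the equality characterization, which \cref{cor:shadow-at-star} already supplies. The remaining points to check are small: the edge case $k=n/2$ is handled separately because there $\ell=k$ and \cref{cor:shadow-at-star} does not apply, and the strict monotonicity of $g$ is what makes $x\leq p$ sharp.
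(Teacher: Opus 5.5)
Your proof is correct and follows the paper's argument essentially step for step: the upper shadow at level $n-k$ is disjoint from the complement family, you combine this with the power comparison $\mu_{n-k}(\partial_{k\to n-k}\F)\geq x^{\log(1-p)/\log p}$, use strict monotonicity of $x+x^{\log(1-p)/\log p}$, and obtain the equality case from the equality clause of the shadow corollary. The one difference is your separate treatment of $k=n/2$. It is valid but not needed: the corollary's inequality does hold when $k=\ell$ (the exponent is then $1$), and only its equality clause requires $k<\ell$, so your remark that the corollary ``does not apply'' at $k=n/2$ is slightly too strong.
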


\begin{proof}
Let $p=\frac{k}{n}$ and
$$\overline{\F}=\left\{[n]\setminus A:A\in\F\right\}
\subseteq\binom{[n]}{n-k}.$$
Since $\F$ is intersecting, we have
$$\partial_{k\to n-k}\F\cap\overline{\F}=\varnothing.$$
Indeed, otherwise there would be $A,A'\in\F$ such that
$A\subseteq[n]\setminus A'$, contrary to $A\cap A'\neq\varnothing$.
Moreover,
$\mu_{n-k}(\overline{\F})=\mu_k(\F)$,
and hence
$$\mu_{n-k}(\partial_{k\to n-k}\F)\leq 1-\mu_k(\F).$$
On the other hand, applying \cref{cor:shadow-at-star} with
$\ell=n-k$, gives
$$\mu_{n-k}(\partial_{k\to n-k}\F)\geq \mu_k(\F)^{\frac{\log(1-p)}{\log p}}.$$
Consequently,
$$\mu_k(\F)+\mu_k(\F)^{\frac{\log(1-p)}{\log p}}\leq1.$$
The left-hand side is strictly increasing in $\mu_k(\F)$, and its value at $\mu_k(\F)=p$ is $p+p^{\frac{\log(1-p)}{\log p}}=p+(1-p)=1$. Therefore $\mu_k(\F)\leq p$.

Suppose now that $k<\frac{n}{2}$ and $\mu_k(\F)=p$. Then the upper and lower estimates for the shadow are both equalities, so
$$
\mu_{n-k}(\partial_{k\to n-k}\F)=1-p.
$$
Since $k<n-k<n$ and $\mu_k(\F)=p$, applying \cref{cor:shadow-at-star} with $\ell=n-k$ shows that $\F$ is a $1$-star. This also proves the converse,
because a $1$-star is intersecting and has density $p$.
\end{proof}

\begin{remark}[Relation to earlier shadow proofs]\label{rem:ekr-history}
The use of complements and shadows in the Erd\H{o}s--Ko--Rado theorem
is classical. Daykin~\cite{Daykin1974} deduced the theorem directly
from Kruskal--Katona, and Frankl and F\"uredi~\cite{FranklFuredi2012}
observe that shadows of complements already appear in Katona's
work~\cite[p.~334]{Katona1964}. In \cref{cor:ekr-from-shadow}, the
exact Kruskal--Katona expansion is replaced by the closed-form estimate
\eqref{eq:uniform-power-comparison}, reducing the proof to
$$x+x^{\frac{\log(1-p)}{\log p}}\leq1.$$
This is the simplest instance of the same upper-shadow comparison
used for the Frankl--Tokushige product bounds.
\end{remark}

\section{Proofs of the Frankl--Tokushige conjectures}\label{sec:uniform-proof}

\subsection{Critical-sum inequalities}

We begin with the coupling inequalities used in both the uniform and
biased proofs.

\begin{lemma}\label{lem:uniform-critical-sum}
Let $0\leq\ell_i\leq n$ be integers satisfying $\sum_{i=1}^r\ell_i=(r-1)n$. If $\F_i\subseteq\binom{[n]}{\ell_i}$ are $r$-cross-intersecting, then
$$
\sum_{i=1}^r\mu_{\ell_i}(\F_i)\leq r-1.
$$
\end{lemma}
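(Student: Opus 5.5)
The plan is the random-partition coupling already sketched in the outline, combined with the covering property of \cref{lem:uniform-dual-family}.

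Set $k_i:=\ell_i$ and $m_i:=n-\ell_i$. Then $m_i\geq0$ and $\sum_i m_i=rn-(r-1)n=n$, so ordered partitions $[n]=P_1\mathbin{\dot\cup}\cdots\mathbin{\dot\cup}P_r$ with $|P_i|=m_i$ exist. Choose such an ordered partition uniformly at random, for instance by taking a uniformly random permutation of $[n]$ and cutting it into consecutive blocks of sizes $m_1,\dots,m_r$.

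The key observation is about the marginals. For each fixed $i$, the part $P_i$ is a uniformly random member of $\binom{[n]}{m_i}$, since the symmetric group acts transitively and the distribution is invariant. Consequently $X_i:=[n]\setminus P_i$ is uniform on $\binom{[n]}{\ell_i}$. This gives
$$\Pr[X_i\in\F_i]=\mu_{\ell_i}(\F_i).$$

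On the other hand, the $P_i$ partition $[n]$, so $\bigcap_i X_i=[n]\setminus\bigcup_i P_i=\varnothing$. By $r$-cross-intersection, the event $\{X_i\in\F_i \text{ for all } i\}$ is therefore impossible. Equivalently, by \cref{lem:uniform-dual-family}, some $P_i$ lies in $\F_i^*$, that is, some $X_i\notin\F_i$.

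Taking expectations of the indicator count then finishes the argument:
$$\sum_{i=1}^r\mu_{\ell_i}(\F_i)=\mathbb E\Bigl[\sum_i\mathbbm 1_{\{X_i\in\F_i\}}\Bigr]\leq r-1,$$
because the random sum is an integer between $0$ and $r$ that never equals $r$.

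I do not expect any real obstacle. The only points needing care are the degenerate parameters. If $\ell_i=n$, then $P_i=\varnothing$ and $X_i=[n]$, which the argument handles automatically. If $\ell_i=0$, then $\F_i\subseteq\{\varnothing\}$, and the argument again applies verbatim; there is no sizing constraint beyond $\sum_i m_i=n$, which holds by hypothesis.
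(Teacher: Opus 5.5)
Your proposal is correct and follows essentially the same route as the paper: a uniformly random ordered partition with $|P_i|=n-\ell_i$, the observation that each $X_i=[n]\setminus P_i$ is uniform on $\binom{[n]}{\ell_i}$ while $\bigcap_i X_i=\varnothing$, and then taking expectations of the indicator count. Your appeal to \cref{lem:uniform-dual-family} is just an equivalent restatement of the direct cross-intersection argument, which the paper uses on its own.
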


\begin{proof}
Choose a uniformly random ordered partition $[n]=P_1\mathbin{\dot\cup}\cdots\mathbin{\dot\cup}P_r$ with $|P_i|=n-\ell_i$, and set $X_i=[n]\setminus P_i$. Then $X_i$ is uniformly distributed on $\binom{[n]}{\ell_i}$ and
$$
X_1\cap\cdots\cap X_r=\varnothing.
$$
Thus the events $X_i\in\F_i$ cannot all occur. Hence
$$\sum_{i=1}^r\mathbbm{1}_{\{X_i\in\F_i\}}\leq r-1,$$
and taking expectations proves the lemma.
\end{proof}

The same construction has the following product-measure form. When
$q_1=\cdots=q_r=(r-1)/r$, it is the equal-bias coupling from
\cite[Lemma~3.1]{CLL2026}.
\begin{lemma}\label{lem:biased-critical-sum}
Let $q_1,\ldots,q_r\in[0,1]$ satisfy $\sum_iq_i=r-1$. If
$\A_1,\ldots,\A_r\subseteq2^{[n]}$ are $r$-cross-intersecting, then
$$
\sum_{i=1}^r\mu_{q_i}(\A_i)\leq r-1.
$$
\end{lemma}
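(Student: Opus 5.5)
The plan is to mimic the proof of \cref{lem:uniform-critical-sum}, replacing the uniformly random ordered partition by a random ``product-measure partition''. If some $q_i=1$, the remaining $q_j$ sum to $r-2$ over $r-1$ indices. We therefore treat the general case directly, and no separate handling of that case is needed. The sampling scheme is as follows. Independently for each element $x\in[n]$, choose a label $\sigma(x)\in[r]$, with $\Pr[\sigma(x)=i]=1-q_i$. This is a valid probability distribution, because $1-q_i\ge 0$ and $\sum_i(1-q_i)=r-(r-1)=1$. Set $P_i=\sigma^{-1}(i)$, which gives an ordered partition $[n]=P_1\mathbin{\dot\cup}\cdots\mathbin{\dot\cup}P_r$, and put $X_i=[n]\setminus P_i$.

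Next, we check the marginals. For fixed $i$, the events $\{x\in X_i\}=\{\sigma(x)\neq i\}$ are independent over $x$, and each has probability $q_i$. So $X_i$ is distributed according to $\mu_{q_i}$, and $\Pr[X_i\in\A_i]=\mu_{q_i}(\A_i)$. Since the $P_i$ cover $[n]$, we have $X_1\cap\cdots\cap X_r=[n]\setminus\bigcup_iP_i=\varnothing$. The $r$-cross-intersecting property then says the events $\{X_i\in\A_i\}$ cannot all occur simultaneously. Hence
$$\sum_{i=1}^r\mathbbm{1}_{\{X_i\in\A_i\}}\le r-1$$
pointwise, and taking expectations gives $\sum_i\mu_{q_i}(\A_i)\le r-1$.

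There is no genuine obstacle. The only points to check are that the label distribution is legitimate, which is exactly where the hypothesis $\sum_iq_i=r-1$ enters, and that boundary values such as $q_i\in\{0,1\}$ cause no trouble. Neither needs any special treatment: a label with probability zero is simply never chosen. Note also that, unlike \cref{lem:biased-dual-family}, no monotonicity of the $\A_i$ is required, since the argument uses only the marginal laws and the empty common intersection.
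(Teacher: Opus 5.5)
Your proof is correct and matches the paper's argument: the same independent per-coordinate label with $\Pr[\text{label}=i]=1-q_i$, the same marginal check $X_i\sim\mu_{q_i}$, and the same pointwise bound on the indicator sum followed by taking expectations. Your remark that no monotonicity of the $\A_i$ is needed is also consistent with how the paper states and uses the lemma.
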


\begin{proof}
Independently for each coordinate $x\in[n]$, choose an index
$I_x\in[r]$ with $\Pr[I_x=i]=1-q_i$. Define $X_i\subseteq[n]$ by
$x\in X_i$ if and only if $I_x\neq i$. For each $x\in[n]$,
$$\Pr[x\in X_i]=1-\Pr[I_x=i]=q_i.$$
Therefore $X_i\sim \mu_{q_i}$ for every $i\in[r]$. On the other hand, every coordinate $x\in[n]$ is omitted from exactly one of the sets $X_1,\dots,X_r$, hence
$$X_1\cap\cdots\cap X_r=\varnothing.$$
Since $\A_1,\dots,\A_r$ are $r$-cross-intersecting, the events $X_i\in\A_i$ cannot all occur. Hence $$\sum_{i=1}^r\mathbbm{1}_{\{X_i\in\A_i\}}\leq r-1.$$
Taking expectations and using $X_i\sim\mu_{q_i}$ gives
$\sum_{i=1}^r\mu_{q_i}(\A_i)\leq r-1$.
\end{proof}

The equality classifications in
\cref{thm:uniform-main,thm:biased-main} require the corresponding
equality cases of the two coupling inequalities.

\begin{lemma}\label{lem:uniform-partition-equality}
Let $r\geq3$ and $k=\frac{n(r-1)}{r}$. If $\F_1,\dots,\F_r\subseteq \binom{[n]}{k}$ are $r$-cross-intersecting and $\sum_{i=1}^r\mu_k(\F_i)=r-1$, then either there is $x\in[n]$ such that
$$\F_i=\left\{A\in\binom{[n]}k:x\in A\right\},$$
for every $i\in[r]$, or one of the $\F_i$ is empty and all the others are $\binom{[n]}{k}$.
\end{lemma}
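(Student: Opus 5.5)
Write $m=n-k=n/r$ and $\mathcal G_i=\F_i^*\subseteq\binom{[n]}m$. By \cref{lem:uniform-dual-family}, $\sum_i\mu_m(\mathcal G_i)=\sum_i(1-\mu_k(\F_i))=1$, and every ordered partition $[n]=P_1\mathbin{\dot\cup}\cdots\mathbin{\dot\cup}P_r$ into $m$-sets has at least one $i$ with $P_i\in\mathcal G_i$. For a uniformly random such partition, $\mathbb E\,\#\{i:P_i\in\mathcal G_i\}=\sum_i\mu_m(\mathcal G_i)=1$. Hence equality in the coupling of \cref{lem:uniform-critical-sum} means that \emph{every} ordered partition has \emph{exactly one} index $i$ with $P_i\in\mathcal G_i$.

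The target translates as follows. $\F_i$ is the star at $x$ iff $\mathcal G_i=\{B:x\in B\}$. Also $\F_i=\varnothing$ iff $\mathcal G_i=\binom{[n]}m$, and $\F_i=\binom{[n]}k$ iff $\mathcal G_i=\varnothing$.

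\textbf{Step 1 (cross-intersection).} For $i\neq j$, the families $\mathcal G_i$ and $\mathcal G_j$ are cross-intersecting. Indeed, given disjoint $A\in\mathcal G_i$ and $B\in\mathcal G_j$, we can complete $P_i=A$, $P_j=B$ to an ordered partition, because $n-2m=(r-2)m$. That partition would have two hits, which is impossible.

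\textbf{Step 2 (a full family).} Suppose some $\mathcal G_i=\binom{[n]}m$. Since $n-m\geq 2m\geq m$, every $m$-set is disjoint from some $m$-set, so Step 1 forces every other $\mathcal G_j$ to be empty. This is the second alternative. Symmetrically, if some $\mathcal G_j$ is empty, the ``at least one hit'' property applied to partitions on the remaining blocks should force the structure as well. So assume from now on that no $\mathcal G_i$ is full.

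\textbf{Step 3 (Katona cycle).} I would apply Katona's cycle method. Fix a cyclic ordering $\sigma$ of $[n]$ and cut it into $r$ consecutive arcs of length $m$, rotating the cut point by $s\in\{0,\dots,m-1\}$ and assigning arcs to blocks in every cyclic order. Each such partition has exactly one hit, and hit arcs with different labels pairwise intersect by Step 1. Consider the $n$ arcs of $\sigma$ together with their label sets $\{i:\text{arc}\in\mathcal G_i\}$. Exactly one hit in each of the $m\cdot r!$ partitions, combined with cross-intersection, should force the following picture: the labelled hit arcs on $\sigma$ are precisely the $m$ arcs through one point $x_\sigma$, and each such arc lies in every $\mathcal G_i$. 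The alternative is that one label is attached to all arcs, which is excluded by Step 2 after averaging over $\sigma$. Here $r\geq3$ is used: for $r=2$ complementary arcs can carry labels arbitrarily, which is exactly the degenerate case of \cref{thm:uniform-main}.

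\textbf{Step 4 (common center).} The center $x_\sigma$ is independent of $\sigma$. Two cyclic orders differing by an adjacent transposition share many arcs, so they have the same center. Since adjacent transpositions connect all cyclic orders, we get a single $x$, and hence $\mathcal G_i\supseteq\{B:x\in B\}$ for every $i$. The density sum equals $r\cdot(m/n)=1$, which forces equality. Translating back gives $\F_i=\{A:x\in A\}$ for all $i$.

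I expect the main obstacle to be the equality analysis on a single cycle in Step 3. I would attempt it by a direct combinatorial argument on arcs: any two disjoint arcs cannot both carry labels unless the labels coincide, and each partition has exactly one labelled arc. From this I would show that labelled arcs are nested around a point, and then that a label missing from some arc through $x_\sigma$ produces a partition with zero hits.
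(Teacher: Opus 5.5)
There is a genuine gap, and it sits exactly at the step you flag. Step 3 is asserted (``should force''), not proved, and the route you sketch for it does not work as stated. The hint ``each partition has exactly one labelled arc'' is false in the configuration where a single label $i$ sits on every arc. There every arc of the partition is labelled, and only the arc assigned to block $i$ is a hit.

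That configuration also cannot be excluded ``by Step 2 after averaging over $\sigma$''. A cycle of that type contributes exactly $1$ to $\sum_i\mu_m(\mathcal G_i)$, just as a cycle with $m$ fully labelled arcs through a point does. So any mixture of the two types is consistent with the density sum, and no $\mathcal G_i$ need be full. What you actually need is propagation across cycles: label sets are intrinsic to sets, and adjacent cyclic orders share an arc.

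Step 4 has the same problem. If the transposed pair contains the centre, the shared arcs do not determine the new centre. Take $m=2$, $\sigma=(u,v,w_2,\dots,w_{n-1})$ with centre $v$, and $\sigma'=(v,u,w_2,\dots,w_{n-1})$ with centre $u$. These agree on every arc common to both: $\{u,v\}$ is fully labelled and each $\{w_t,w_{t+1}\}$ is unlabelled. The repair is to use only transpositions avoiding the current centre; these preserve it, and they still connect all cyclic orders.

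The missing idea, which makes all of this easy, is the swap identity. Take an ordered partition with $P_i=A$ and $P_j=B$, and compare it with the partition in which $A$ and $B$ are exchanged. Exactly one hit in each gives
$\mathbbm{1}_{\mathcal G_i}(A)-\mathbbm{1}_{\mathcal G_j}(A)=\mathbbm{1}_{\mathcal G_i}(B)-\mathbbm{1}_{\mathcal G_j}(B)$.

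Inside one cycle partition, this already yields your dichotomy. The arcs-versus-labels $0/1$ matrix has the form $u_t+v_i$, so either one arc carries all labels and the others none, or one label is carried by all $r$ arcs and no other label appears.

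Applied to arbitrary disjoint $m$-sets, the identity goes much further. Since $r\geq 3$, any two $m$-sets $A,B$ have an $m$-set $C$ disjoint from $A\cup B$, because $n-2m\geq m$. Comparing $(A,C)$ with $(B,C)$ then shows that $\mathbbm{1}_{\mathcal G_i}-\mathbbm{1}_{\mathcal G_j}$ is a constant $c_{ij}$ on all of $\binom{[n]}{m}$.

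A nonzero $c_{ij}$ gives one full family and all others empty. Otherwise all $\mathcal G_i$ equal one family $\mathcal G$. This $\mathcal G$ is intersecting, since two disjoint members would give two hits, and $|\mathcal G|=\binom{n-1}{m-1}$ with $n=rm>2m$. The equality case of \cref{cor:ekr-from-shadow} then makes it a star.

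This is the paper's proof. Your cycle argument, once repaired, amounts to reproving EKR uniqueness by Katona's method inside the lemma.
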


\begin{proof}
For every $i\in[r]$, let $\F_i^*$ be the uniform dual family of $\F_i$. Then $\mu_{n-k}(\F_i^*)=1-\mu_{k}(\F_i)$. By \cref{lem:uniform-dual-family}, every ordered partition $[n]=P_1\mathbin{\dot\cup}\cdots\mathbin{\dot\cup}P_r$ into
$(n-k)$-sets has at least one index $i$ with $P_i\in \F_i^*$. Choosing such a partition uniformly at random, we have,
$$\mathbb{E}\left[\sum_{i=1}^r\mathbbm{1}_{\{P_i\in\F_i^*\}}\right]=\sum_{i=1}^r\mu_{n-k}(\F_i^*)=1. $$
Hence \begin{equation}\label{eq:uniform-sum-one}
    \sum_{i=1}^r\mathbbm{1}_{\{P_i\in\F_i^*\}}=1.
\end{equation}

Fix distinct $i,j\in[r]$, and let $A,B\in\binom{[n]}{n-k}$ be disjoint.
Partition $[n]\setminus(A\cup B)$ into $(n-k)$-sets indexed by
$[r]\setminus\{i,j\}$, and set $P_i=A$ and $P_j=B$. Applying \eqref{eq:uniform-sum-one} to this ordered partition and to the partition obtained by interchanging $A$ and $B$, and subtracting the two identities
gives
$$
\mathbbm{1}_{\{A\in\F_i^*\}}-
\mathbbm{1}_{\{A\in\F_j^*\}}
=
\mathbbm{1}_{\{B\in\F_i^*\}}-
\mathbbm{1}_{\{B\in\F_j^*\}}.
$$
For arbitrary $A,B\in\binom{[n]}{n-k}$, there is an $(n-k)$-set $C$ disjoint from $A\cup B$, since $n-|A\cup B|\geq n-2(n-k)\geq n-k$. Applying the
preceding identity to $(A,C)$ and $(B,C)$ shows that
$\mathbbm{1}_{\F_i^*}-\mathbbm{1}_{\F_j^*}=c_{ij}$ on $\binom{[n]}{n-k}$, for some $c_{ij}\in \{-1,0,1\}$.

Suppose first that some $c_{ij}$ is nonzero. After interchanging $i$ and $j$, if necessary, assume that $c_{ij}=1$. Then $\F_i^*=\binom{[n]}{n-k}$ and $\F_j^*=\varnothing$. Since $\F_i^*$ contributes a selected block to every ordered partition, \eqref{eq:uniform-sum-one} forces $\F_h^*=\varnothing$ for every $h\neq i$. Hence  $\F_i=\varnothing$ and $\F_h=\binom{[n]}{k}$ for all $h\neq i$.

It remains to consider the case in which every $c_{ij}=0$. Then $\F_1^*=\cdots=\F_r^*=:\F^*$ and $\F_1=\cdots=\F_r=:\F$. Since $\sum_{i=1}^r\mu_k(\F_i)=r-1$, we have $\sum_{i=1}^r\mu_{n-k}(\F_i^*)=1$ and $\mu_{n-k}(\F^*)=\frac{1}{r}$. We claim that the family $\F^*$ is intersecting. Indeed, if
$A,B\in\F^*$ with $A\cap B=\varnothing$, an ordered partition with $P_1=A$ and $P_2=B$ would have at least two selected blocks, contradicting \eqref{eq:uniform-sum-one}. Moreover, $|\F^*|=\frac{1}{r}\binom{n}{n-k}=\binom{n-1}{n-k-1}$.
Since $n=r(n-k)>2(n-k)$, the equality statement in \cref{cor:ekr-from-shadow} shows that $\F^*$ is a full $1$-star. Hence $\F$ is the same full $1$-star, as needed.
\end{proof}

\begin{lemma}\label{lem:biased-equality}
Let $r\geq3$ and $p=\frac{r-1}{r}$. If $\A_1,\dots,\A_r\subseteq 2^{[n]}$ are increasing and $r$-cross-intersecting and $\sum_{i=1}^r\mu_p(\A_i)=r-1$, then either there is $x\in[n]$ such that
$$\A_i=\left\{A\subseteq [n]:x\in A\right\},$$
for every $i\in[r]$, or one of the $\A_i$ is empty and all the others are $2^{[n]}$.
\end{lemma}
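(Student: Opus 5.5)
I would mirror the proof of \cref{lem:uniform-partition-equality}, replacing uniform partitions by the product coupling of \cref{lem:biased-critical-sum} with $q_1=\cdots=q_r=p$. For each $i$, let $\A_i^*$ be the dual family. By \cref{lem:biased-dual-family}, each $\A_i^*$ is increasing and $\mu_{1/r}(\A_i^*)=1-\mu_p(\A_i)$. Moreover, every ordered partition $[n]=P_1\mathbin{\dot\cup}\cdots\mathbin{\dot\cup}P_r$ (empty parts allowed) has some $i$ with $P_i\in\A_i^*$.

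Now take the random partition with $I_x$ uniform on $[r]$, independently over $x$, and put $P_i=\{x:I_x=i\}$. Then $P_i\sim\mu_{1/r}$, and
$$\mathbb E\Bigl[\sum_i\mathbbm 1_{\{P_i\in\A_i^*\}}\Bigr]=\sum_i\bigl(1-\mu_p(\A_i)\bigr)=1.$$
Since the sum is at least $1$ pointwise, it equals $1$ almost surely. Every ordered partition has positive probability, so the identity $\sum_i\mathbbm 1_{\{P_i\in\A_i^*\}}=1$ holds for every ordered partition of $[n]$.

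Next, fix distinct $i,j$ and disjoint sets $A,B\subseteq[n]$. Take a partition with $P_i=A$, $P_j=B$, and the rest of $[n]\setminus(A\cup B)$ given to some third index $h\notin\{i,j\}$; this exists because $r\geq3$. Swapping $A$ and $B$ and subtracting gives
$$\mathbbm 1_{\A_i^*}(A)-\mathbbm 1_{\A_j^*}(A)=\mathbbm 1_{\A_i^*}(B)-\mathbbm 1_{\A_j^*}(B).$$
Taking $B=\varnothing$ shows that $\mathbbm 1_{\A_i^*}-\mathbbm 1_{\A_j^*}$ is constant on all of $2^{[n]}$. This is easier than the uniform case, since no size constraint is needed; the empty set is always a legitimate part. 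So the difference equals some $c_{ij}\in\{-1,0,1\}$.

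If some $c_{ij}=1$, then $\A_i^*=2^{[n]}$, and the sum-one identity forces $\A_h^*=\varnothing$ for all $h\neq i$. Dualizing, $\A_i=\varnothing$ and $\A_h=2^{[n]}$ for $h\neq i$.

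Otherwise all the duals coincide, $\A^*:=\A_i^*$, with $\mu_{1/r}(\A^*)=1/r$. The identity forbids two disjoint members of $\A^*$: put them as $P_1,P_2$ and the remainder as $P_3$, which gives two selected blocks. So $\A^*$ is an increasing intersecting family of $1/r$-measure $1/r$ with $1/r<1/2$.

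The main step is the equality case of the biased Erd\H{o}s--Ko--Rado theorem at $1/r$, which I would derive from \cref{thm:biased-shadow-comparison}. Put $s=1/r$. Intersecting increasing families satisfy $\mu_{1-s}(\A^*)\leq1-\mu_s(\A^*)$, since $A\in\A^*$ forces $[n]\setminus A\notin\A^*$. Combined with \eqref{eq:biased-power-comparison} for the pair $(s,1-s)$, this gives
$$x+x^{\log(1-s)/\log s}\leq1,\qquad x=\mu_s(\A^*)=s,$$
and the left side equals exactly $1$ here. Hence \eqref{eq:biased-power-comparison} holds with equality, so $\A^*=\{A:S\subseteq A\}$ for some $S$. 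The measure condition $s^{|S|}=s$ forces $|S|=1$, so $\A^*=\Sstar_x$. Finally, $\A_i=(\A^*)^*=\{B:[n]\setminus B\notin\Sstar_x\}=\Sstar_x$ for every $i$, as required.
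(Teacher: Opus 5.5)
Your proof is correct. Up to and including the case $c_{ij}\neq0$ it coincides with the paper's argument. The two routes diverge only in the final case, where all the duals equal $\A^*$.

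\textbf{The paper's final step.} It never passes through intersection or Erd\H{o}s--Ko--Rado. It applies the sum-one identity to the two partitions $(A\cup\{x\},\varnothing,R,\varnothing,\dots,\varnothing)$ and $(A,\{x\},R,\varnothing,\dots,\varnothing)$, where $R=[n]\setminus(A\cup\{x\})$. Subtracting shows that $d_x=\mathbbm{1}_{\A^*}(A\cup\{x\})-\mathbbm{1}_{\A^*}(A)$ does not depend on $A\subseteq[n]\setminus\{x\}$. Since $\mathbbm{1}_{\A^*}$ is $\{0,1\}$-valued, $d_x=1$ for some $x$ forces $\A^*=\Sstar_x$. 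The remaining possibilities contradict the identity: $d_x=-1$ yields exactly $r-1\geq2$ selected parts, and $d_x=0$ for all $x$ makes the indicator constant. This argument is self-contained and elementary. It uses neither the monotonicity of $\A^*$, nor the value $\mu_{1/r}(\A^*)=1/r$, nor any imported result.

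\textbf{Your final step.} You show that $\A^*$ is intersecting with measure $s=1/r<1/2$, then extract the star from the equality case of \eqref{eq:biased-power-comparison} at the pair $(s,1-s)$. The paper imports that equality case from \cite[Lemma~3.2]{CLL2026}. This is legitimate: there is no circularity, and $0<s<1-s<1$ holds because $r\geq3$. It also has the merit of mirroring \cref{lem:uniform-partition-equality}, so the biased EKR equality case visibly emerges from the same shadow comparison. In the uniform setting that detour is genuinely needed, because fixed part sizes forbid moving a single element between parts.

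\textbf{The trade-off.} You already noticed that parts of arbitrary size, including empty ones, make the $c_{ij}$ step easier. That same freedom is what lets the paper avoid the EKR detour in the last step too, at the price of losing the structural parallel with the slice argument.
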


\begin{proof}
For each $i\in[r]$, let $\A_i^*$ be the dual family of $\A_i$, that is,
  $$\A_i^*:=\{B\subseteq[n]:[n]\setminus B\notin\A_i\}.$$
Then $\mu_{1/r}(\A_i^*)=1-\mu_p(\A_i)$ for all $i\in[r]$, and therefore $\sum_{i=1}^r\mu_{1/r}(\A_i^*)=1$. By \cref{lem:biased-dual-family}, every ordered partition $[n]=P_1\mathbin{\dot\cup}\cdots\mathbin{\dot\cup}P_r$ has at least one index $i$ with $P_i\in \A_i^*$. Choose a random ordered partition by assigning each element of $[n]$ independently and uniformly to one of the $r$ parts. Then each $P_i$ has distribution $\mu_{1/r}$, and hence
$$\mathbb{E}\left[\sum_{i=1}^r\mathbbm{1}_{\{P_i\in\A_i^*\}}\right]=\sum_{i=1}^r\mu_{1/r}(\A_i^*)=1. $$
Hence \begin{equation}\label{eq:biased-sum-equality}
    \sum_{i=1}^r\mathbbm{1}_{\{P_i\in\A_i^*\}}=1.
\end{equation}

Fix distinct $i,j\in[r]$ and disjoint sets $A,B\subseteq[n]$. Since $r\geq3$, complete $A$ and $B$ to an ordered partition with $P_i=A$ and $P_j=B$. Comparing \eqref{eq:biased-sum-equality} for this partition and for the partition obtained by interchanging $A$ and $B$ gives
$$
\mathbbm{1}_{\{A\in\A_i^*\}}-
\mathbbm{1}_{\{A\in\A_j^*\}}
=
\mathbbm{1}_{\{B\in\A_i^*\}}-
\mathbbm{1}_{\{B\in\A_j^*\}}.
$$
Taking $B=\varnothing$, we obtain
$\mathbbm{1}_{\A_i^*}-\mathbbm{1}_{\A_j^*}=c_{ij}$ on $2^{[n]}$, for some $c_{ij}\in \{-1,0,1\}$.

Suppose first that some $c_{ij}$ is nonzero. After interchanging $i$ and $j$, assume $c_{ij}=1$. Then $\A_i^*=2^{[n]}$ and $\A_j^*=\varnothing$. Since $\A_i^*$ contributes a selected block to every ordered partition, \eqref{eq:biased-sum-equality} forces $\A_h^*=\varnothing$ for every $h\neq i$. Then $\A_i=\varnothing$ and $\A_h=2^{[n]}$ for all $h\neq i$.

It remains to consider the case $c_{ij}=0$ for every pair $i,j$. Then $\A_1^*=\cdots=\A_r^*=:\A^*$ and $\A_1=\cdots=\A_r=:\A$. We claim that $\A^*$ is a full $1$-star. Fix $x\in[n]$ and $A\subseteq[n]\setminus\{x\}$. Apply \eqref{eq:biased-sum-equality}  to the two ordered partitions
\[
  \bigl(A\cup\{x\},\,\varnothing,\,[n]\setminus(A\cup\{x\}),\,
  \varnothing,\dots,\varnothing\bigr)
\]
and
\[
  \bigl(A,\,\{x\},\,[n]\setminus(A\cup\{x\}),\,
  \varnothing,\dots,\varnothing\bigr).
\]
Subtracting the resulting identities yields
\[
  \mathbbm{1}_{\A^*}(A\cup\{x\})-\mathbbm{1}_{\A^*}(A)
  =\mathbbm{1}_{\A^*}(\{x\})-\mathbbm{1}_{\A^*}(\varnothing)
  =:d_x.
\]
Since $\mathbbm{1}_{\A^*}$ is $\{0,1\}$-valued, $d_x\in\{-1,0,1\}$. If $d_x=1$, then
$\mathbbm{1}_{\A^*}(A)=0$ and $\mathbbm{1}_{\A^*}(A\cup\{x\})=1$ for every $A\subseteq[n]\setminus\{x\}$, and hence
\[
  \A^*=\{A\subseteq[n]:x\in A\}.
\]
If $d_x=-1$, then $\A^*=\{A\subseteq[n]:x\notin A\}$. In that case every ordered partition has exactly $r-1$ parts belonging to $\A^*$, contradicting \eqref{eq:biased-sum-equality} because $r\geq3$. Consequently, no $d_x$ equals $-1$. If no $d_x$ equals $1$, then $d_x=0$ for every $x$, so $\mathbbm{1}_{\A^*}$ is constant on $2^{[n]}$. Every ordered partition would then have either no selected part or all $r$ parts selected, again contradicting \eqref{eq:biased-sum-equality}. Therefore $d_x=1$ for some $x\in[n]$, and $\A^*$ is the full
$1$-star centered at $x$. Since a full $1$-star is self-dual,
$\A=\A^*$ is the same full $1$-star.
\end{proof}

As a first application, we recover the equal-level theorem of Frankl
and Tokushige~\cite{FT2011} by combining
\cref{lem:uniform-critical-sum} with
\eqref{eq:uniform-power-comparison}.
\begin{theorem}[Frankl--Tokushige~\cite{FT2011}]\label{thm:equal-level-theorem}
    Suppose that $\F_1,\ldots,\F_r\subseteq\binom{[n]}k$ are $r$-cross-intersecting and $rk\leq(r-1)n$. Then
    $$\prod_{i=1}^r|\F_i|\leq \binom{n-1}{k-1}^{r}.$$
\end{theorem}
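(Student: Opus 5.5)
The plan is to combine the critical-sum coupling of \cref{lem:uniform-critical-sum} with the power bound \eqref{eq:uniform-power-comparison} of \cref{cor:shadow-at-star}, and then finish with AM--GM. Put $p=k/n$, $x_i=\mu_k(\F_i)$, and $\ell=(r-1)n/r$.

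\textbf{Degenerate cases.} If some $\F_i$ is empty, the claim is trivial, so assume every $x_i>0$. If $k=0$, then each $\F_i$ is $\varnothing$ or $\{\varnothing\}$. The $r$-cross-intersection condition rules out all of them being nonempty, so the product vanishes.

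\textbf{The generic case $\ell$ an integer.} Suppose first that $\ell$ is an integer, i.e.\ $r\mid n$. Upper shadows preserve $r$-cross-intersection, since enlarging sets cannot destroy a common element. So $\partial_{k\to\ell}\F_1,\ldots,\partial_{k\to\ell}\F_r\subseteq\binom{[n]}{\ell}$ are $r$-cross-intersecting, with $\sum_i\ell=(r-1)n$. \Cref{lem:uniform-critical-sum} then gives $\sum_i\mu_\ell(\partial_{k\to\ell}\F_i)\le r-1$.

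Set $q=\ell/n=(r-1)/r$ and $\alpha=\log q/\log p$. Note that $\alpha\in(0,1]$ because $p\le q$. By \eqref{eq:uniform-power-comparison},
$$\sum_i x_i^{\alpha}\le r-1=rq.$$
AM--GM yields $\prod_i x_i^{\alpha}\le\bigl(\frac1r\sum_i x_i^\alpha\bigr)^r\le q^r=p^{\alpha r}$. Taking $\alpha$-th roots gives $\prod_i x_i\le p^r$, which is $\prod_i|\F_i|\le\binom{n-1}{k-1}^r$ because $x_i\cdot\binom nk$ with $p\binom nk=\binom{n-1}{k-1}$.

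\textbf{Main obstacle: $r\nmid n$.} When $(r-1)n/r$ is not an integer, equal levels are unavailable. I would use unequal levels $\ell_1,\ldots,\ell_r\ge k$ with $\sum_i\ell_i=(r-1)n$, each equal to $\lfloor(r-1)n/r\rfloor$ or $\lceil(r-1)n/r\rceil$. These are all at least $k$, since $k\le(r-1)n/r$ and $k$ is an integer.

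The coupling then gives $\sum_i x_i^{\alpha_i}\le r-1$ with $\alpha_i=\log q_i/\log p$ and $q_i=\ell_i/n$, so that $p^{\alpha_i}=q_i$. Writing $x_i=p^{d_i}$, I need to show that $\sum_i p^{\alpha_i d_i}\le\sum_i p^{\alpha_i}$ forces $\sum_i d_i\ge r$.

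For this I would minimize $\sum_i d_i$ under the constraint $\sum_i q_i^{d_i}\le\sum_i q_i$ using Lagrange multipliers or convexity. Since $d\mapsto q_i^{d}$ is convex, tangent lines at $d=1$ give $q_i^{d_i}\ge q_i+q_i\log q_i\,(d_i-1)$. This alone does not immediately give $\sum d_i\ge r$ when the $q_i$ differ, so I would instead average the coupling over all assignments of the floor and ceiling levels to the families. This symmetrizes the inequality.

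If the averaged inequality still proves delicate, the fallback is to invoke the general unequal-level machinery of \cref{thm:uniform-main}, which contains this case. Alternatively, reduce via $\mu_k$ of $\F_i$ inside $[nr]$ by replication: lift each $\F_i$ to $\hat\F_i=\{A\in\binom{[rn]}{rk}:\ldots\}$, which I expect is the cleanest route if a density-preserving lift exists.
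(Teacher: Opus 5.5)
Your plan is the paper's plan: upper shadows to levels summing to $(r-1)n$, \cref{lem:uniform-critical-sum}, the power bound \eqref{eq:uniform-power-comparison}, AM--GM, and, when $r\nmid n$, averaging over which families receive the lower level. Your divisible case is complete and coincides with the paper's case $b=0$. For $r\nmid n$, however, you stop exactly where an argument is needed. You correctly note that one fixed assignment of levels is not enough, since the tangent slopes $q_i\log q_i$ differ. You then propose averaging, but you never write down the averaged inequality or derive $\prod_i x_i\le p^r$ from it, and you retreat to fallbacks instead. Drop both fallbacks. The replication lift is not specified, and it is not clear that any lift preserves both the exact slice densities and $r$-cross-intersection. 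Citing \cref{thm:uniform-main} is not circular, but it imports the middle branch of \cref{thm:shadow-main} and \cref{thm:product-inequality}, which this case does not need.

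The missing step is short. Write $n=ar+b$ with $0<b<r$, so the levels are $n-a-1\geq k$ and $n-a$. Put $q_-=(n-a-1)/n$, $q_+=(n-a)/n$, and $\alpha_\pm=\log q_\pm/\log p$. Average the coupling over all $\binom rb$ choices of the $b$ indices that get level $n-a-1$; each index is chosen with probability $b/r$. Then apply \eqref{eq:uniform-power-comparison} to get
$$\frac br\sum_{i=1}^r x_i^{\alpha_-}+\frac{r-b}{r}\sum_{i=1}^r x_i^{\alpha_+}\leq r-1 .$$
Every family now carries the same weights, so AM--GM applies to each sum separately. With $P=\prod_i x_i$ this gives $bP^{\alpha_-/r}+(r-b)P^{\alpha_+/r}\leq r-1$. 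The left side is strictly increasing in $P$, since $b>0$ and $\alpha_->0$. At $P=p^r$ it equals $bq_-+(r-b)q_+=r-1$, because $p^{\alpha_\pm}=q_\pm$. Hence $P\leq p^r$.

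In your notation this reads as follows. The function $\phi(d)=\frac br q_-^{d}+\frac{r-b}{r}q_+^{d}$ is convex and strictly decreasing with $r\phi(1)=r-1$. Jensen applied to $\sum_i\phi(d_i)\leq r\phi(1)$ gives $\frac1r\sum_i d_i\geq1$. Your tangent line at $d=1$ also works here, because all families now share one slope. This is exactly the paper's proof.

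One detail remains. If $a=0$, i.e.\ $n<r$, the upper level is $n$, and \cref{cor:shadow-at-star} does not formally apply there. Use $\partial_{k\to n}\F_i=\{[n]\}$ directly; its density is $1=x_i^{0}$, and the computation is unchanged.
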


\begin{proof}
We may assume that all the families are nonempty and $1\leq k\leq \frac{r-1}{r}n$. Write $n=ar+b$ for $0\leq b<r$. The assumption $rk\leq(r-1)n$ implies $k\leq n-a$ when $b=0$, and $k\leq n-a-1$ when $b>0$.

For each choice of $b$ indices from $[r]$, replace $\F_i$ by its upper shadow on level
$n-a-1$ for the chosen indices, and by its upper shadow on level $n-a$ for the remaining
indices. These upper shadows remain $r$-cross-intersecting. Moreover, the sum
of their levels is
$$
b(n-a-1)+(r-b)(n-a)=(r-1)n.
$$
\cref{lem:uniform-critical-sum} therefore gives an additive bound for every such
choice.

Averaging this bound over all choices of the $b$ indices gives
$$
\frac{b}{r}\sum_{i=1}^r\mu_{n-a-1}\left(\partial_{k\to n-a-1}\F_i
\right)+\frac{r-b}{r}\sum_{i=1}^r
\mu_{n-a}\left(\partial_{k\to n-a}\F_i\right)\leq r-1.
$$
When $b=0$, the first sum is omitted. If $a=0$, then the second upper level is $n$;
since every $\F_i$ is nonempty, $\partial_{k\to n}\F_i=\{[n]\}$,
so the corresponding shadow density is $1$.

Applying~\eqref{eq:uniform-power-comparison}, followed by AM--GM, yields
$$
\begin{aligned}
r-1&\geq\frac{b}{r}\sum_{i=1}^r
\mu_k(\F_i)^{\frac{\log((n-a-1)/n)}{\log(k/n)}}+\frac{r-b}{r}
\sum_{i=1}^r\mu_k(\F_i)^{
\frac{\log((n-a)/n)}{\log(k/n)}}\\
&\geq b\left(\prod_{i=1}^r\mu_k(\F_i)\right)^{\frac{\log((n-a-1)/n)}{r\log(k/n)}}+(r-b)\left(\prod_{i=1}^r\mu_k(\F_i)\right)^{\frac{\log((n-a)/n)}{r\log(k/n)}}.
\end{aligned}
$$
As above, the first term is omitted when $b=0$, and an exponent with numerator
$\log 1$ is interpreted as $0$.

The expression in the last line is an increasing function of $\prod_{i=1}^r\mu_k(\F_i)$. In fact, it is strictly increasing, since at least one of the upper levels occurring with
positive multiplicity is smaller than $n$. When $\F_1,\dots,\F_r$ are the corresponding levels of a common $1$-star, i.e., $\prod_{i=1}^r\mu_k(\F_i)=
\left(\frac{k}{n}\right)^r$, this expression is exactly
$$
b\left(\left(\frac{k}{n}\right)^r\right)^{\frac{\log((n-a-1)/n)}{r\log(k/n)}}+(r-b)\left(\left(\frac{k}{n}\right)^r\right)^{\frac{\log((n-a)/n)}{r\log(k/n)}}=\frac{b(n-a-1)+(r-b)(n-a)}{n}=r-1.
$$
It follows directly from the preceding inequality and monotonicity that
$$
\prod_{i=1}^r\mu_k(\F_i)\leq\left(\frac{k}{n}\right)^r.
$$
Consequently,
$$
\prod_{i=1}^r|\F_i|=\binom{n}{k}^r\prod_{i=1}^r\mu_k(\F_i)\leq
\left(\frac{k}{n}\binom{n}{k}
\right)^r=\binom{n-1}{k-1}^r.
$$
This recovers the Frankl--Tokushige equal-level theorem without passing to $r$-cross-union families. The $r$-cross-intersecting condition is used only through \cref{lem:uniform-critical-sum}.
\end{proof}

\begin{remark}[Use of \eqref{eq:theta-profile} in the intersection bounds]
\label{rem:branch-roles}
The proof of~\cref{cor:ekr-from-shadow} and the preceding equal-level
argument use only~\eqref{eq:uniform-power-comparison}, together with~\cref{lem:uniform-critical-sum} and AM--GM in the latter case.

For unequal levels, let $j$ and $q$ be as in the proof of
\cref{thm:uniform-main}, write $\mu_{k_j}(\F_j)=p_ja_j$, and put
$$
\alpha=\frac{\log q}{\log p_j},
\qquad
\beta=\frac{\log(1-q)}{\log(1-p_j)}.
$$
Then \cref{thm:shadow-main} gives
$$
\mu_{qn}(\partial_{k_j\to qn}\F_j)-q
\geq
\begin{cases}
q(a_j^\alpha-1),&0<a_j\leq p_j,\\[1mm]
-q(1-a_j)^\beta,&p_j\leq a_j\leq1,\\[1mm]
(1-p_j)^\beta-(1-p_ja_j)^\beta,
&1\leq a_j\leq \frac{1}{p_j}.
\end{cases}
$$
On $a_j\in(0,p_j]$ and $a_j\in[p_j,1)$, the first two lines combine with~\eqref{ineq:uniform-critical-sum-constraint} to give
hypotheses~(i) and~(ii) of \cref{thm:product-inequality}. On
$a_j\in[1,\frac{1}{p_j}]$, the third line and the same constraint give
$a_i=1$ for $i\leq j$, and the ordering of the $a_i$ completes the
product bound. Thus each of the three formulas in~\eqref{eq:theta-profile} is used in the unequal-level proof.

The middle formula cannot in general be replaced by~\eqref{eq:uniform-power-comparison}. For $r=2$, the latter and the critical-sum inequality give only
$$
p_1(a_1-1)\leq q(1-a_2^\alpha),
\qquad
\alpha=\frac{\log q}{\log p_2}.
$$
For example, $p_1=\frac{1}{4}$, $p_2=\frac{1}{2}$, $a_1=\frac{59}{50}$, and $a_2=\frac{43}{50}$ satisfy this scalar inequality, whereas $a_1a_2=1.0148>1$. This example is not asserted to arise from cross-intersecting families; it shows that~\eqref{eq:uniform-power-comparison} alone does not imply the required product bound.
\end{remark}

\subsection{\texorpdfstring{Proof of~\cref{thm:uniform-main} and~\cref{thm:biased-main}}
{Proof of the uniform and biased theorems}}

The remaining step is the following analytic inequality.
\begin{theorem}\label{thm:product-inequality}
Let $r\geq2$, and let $0<p_i\leq \frac{r-1}{r}$ for every $i\in[r]$.
Suppose that $a_1\geq\cdots\geq a_r>0$. Let $j$ be the first index
such that $\sum_{i=1}^{j}(1-p_i)\geq1$, put $q=\sum_{i<j}(1-p_i)$, and define $\alpha=\frac{\log q}{\log p_j}$ and $\beta=\frac{\log(1-q)}{\log(1-p_j)}$. Suppose that one of the following holds:
\begin{itemize}
    \item[(i)] $0<a_j\leq p_j$ and $\sum_{i<j}p_i(a_i-1)\leq q(1-a_j^\alpha)$;
    \item[(ii)] $p_j\leq a_j<1$ and $\sum_{i<j}p_i(a_i-1)\leq q(1-a_j)^\beta$.
\end{itemize}
Then
$$
\prod_{i=1}^{r}a_i< 1.
$$
\end{theorem}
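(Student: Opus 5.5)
The plan is to reduce to a two-block optimization and then handle each hypothesis by a one-variable argument. Write $P=\prod_{i<j}a_i$, $L=\prod_{i>j}a_i$. By the ordering, $a_i\le a_j<1$ for $i>j$, so $L\le 1$ (with $L=1$ only if $j=r$). It therefore suffices to show $P\,a_j<1$. To bound $P$ under a linear constraint $\sum_{i<j}p_i(a_i-1)\le C$, I will use weighted AM--GM in the form $\log a_i\le a_i-1$. This gives
\[
\log P=\sum_{i<j}\log a_i\le\sum_{i<j}\frac{p_i(a_i-1)}{p_{\min}},
\]
but that loses too much. A better route is a weighted Jensen estimate: maximize $\sum_{i<j}\log a_i$ subject to $\sum p_i a_i\le S:=C+\sum_{i<j}p_i$. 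The maximum is at $p_ia_i=S/(j-1)$, which is messy. Instead I will use the ordering $a_1\ge\dots\ge a_{j-1}\ge a_j$ together with the concavity of $\log$ and the specific structure $q=\sum_{i<j}(1-p_i)<1$.

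The key observation is that $p_i\ge 1/r$ is not available, but $1-p_i\ge 1/r$ is. Also $j-1$ indices have $\sum_{i<j}(1-p_i)=q<1$, and each $1-p_i\ge 1/r$, so $j-1<r$. The cleanest bound I would aim for is
\[
\log P\le \frac{1}{\min_{i<j}p_i}\sum_{i<j}p_i\log a_i,
\]
which is valid when all $a_i\ge1$. For $a_i\ge 1$ I use $\log a_i\le a_i-1$, so $\sum_{i<j}p_i\log a_i\le C$. Any $a_i<1$ with $i<j$ only helps, since then all later $a$'s are smaller.

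So the core one-dimensional inequalities are as follows. In case (ii), with $s=1-a_j\in(0,1-p_j]$, show $\exp\!\bigl(q s^\beta/p_{\min}\bigr)(1-s)<1$. In case (i), show $\exp\!\bigl(q(1-a_j^\alpha)/p_{\min}\bigr)a_j<1$.

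I expect the main obstacle to be that dividing by $p_{\min}$ is too crude, because $p_{\min}$ can be tiny while $q$ stays fixed. To fix this I would keep the exact weights. Set $b_i=p_i(a_i-1)$. Then
\[
\log P\le\sum_{i<j}\log\!\bigl(1+b_i/p_i\bigr),
\]
to be maximized over $\sum b_i\le C$, $b_i\ge 0$. Since the function is concave, the maximum lies at a KKT point, which is $a_i-1$ proportional to \dots. Alternatively, I would use $1-p_i\le q$, i.e.\ $p_i\ge 1-q$ for each $i<j$. This gives the honest bound $\log P\le C/(1-q)$. The claim then becomes: (ii) $q s^\beta/(1-q)+\log(1-s)<0$ for $0<s\le 1-p_j$; (i) $q(1-a^\alpha)/(1-q)+\log a<0$ for $0<a\le p_j$.

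For (ii), compare the two sides at the endpoint $s=1-p_j$, where $s^\beta=(1-p_j)^\beta$ while $\beta\ge1$. Note that $(1-p_j)^\beta=1-q$, so at that endpoint the first term equals $q\,[(1-p_j)^{\beta}/(1-p_j)^{\beta}]\cdot\ldots$; I will check $q\,s^{\beta}/(1-q)\le q s/(1-p_j)$ using $s^{\beta-1}\le (1-p_j)^{\beta-1}$ and $1-q=(1-p_j)^\beta$. It then remains to show $q s/(1-p_j)<-\log(1-s)$, which needs $q<1-p_j$. That holds because $q+1-p_j\ge1$ \emph{fails} in the wrong direction, and this is where I expect to need the condition $p_j\le (r-1)/r$ together with the ordering more carefully. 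For (i), the same convexity-in-$\log a$ argument applies: the function is convex in $\log a$, vanishes as $a\to$ the star value, and its sign is checked at $a=p_j$ using $p_j^\alpha=q$. The tight endpoint analysis at $a_j=p_j$, where (i) and (ii) meet, is the step I expect to be delicate.
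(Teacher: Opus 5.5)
\textbf{There is a genuine gap, and it occurs at your very first reduction.} Bounding $L=\prod_{i>j}a_i$ by $1$ and aiming for $P\,a_j<1$ throws away exactly the factor that makes the theorem true, and the reduced claim is false.

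Take $r=3$, $p_1=\tfrac13$, $p_2=p_3=\tfrac23$. Then $j=2$ and $q=1-p_1=\tfrac23=p_2$, so $\alpha=\beta=1$. The choice $a_1=\tfrac53$, $a_2=a_3=\tfrac23$ is ordered and satisfies both (i) and (ii) with equality, since $p_1(a_1-1)=\tfrac29=q(1-a_2)$. For these values $P\,a_j=a_1a_2=\tfrac{10}{9}>1$, whereas $\prod_i a_i=\tfrac{20}{27}<1$.

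Your one-variable targets fail on the same data. In (ii) with $s=\tfrac13$, and in (i) at $a=p_j$, both expressions equal $\tfrac23+\log\tfrac23>0$. You saw the symptom yourself: your last step needs $q<1-p_j$. The definition of $j$ only gives $q\ge p_j$, and nothing prevents $q+p_j>1$.

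Refining the weights on the block $i<j$ cannot repair this. For $j=2$ even the exact bound $a_1\le 1+q\Delta(a_2)/(1-q)$ yields $a_1a_2=\tfrac{10}{9}$ above. Separately, your remark that any $a_i<1$ with $i<j$ ``only helps'' needs an argument. Dropping the negative terms $p_i(a_i-1)$ loosens the budget available to the remaining $a_i$.

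\textbf{The missing idea is to keep the tail.} Since $a_i\le a_j$ for $i\ge j$, you have $\prod_{i\ge j}a_i\le a_j^{r-j+1}$. This exponent is where the hypothesis $p_i\le\frac{r-1}{r}$, i.e.\ $1-p_i\ge\frac1r$, really enters. Your reduced statement loses all dependence on $r$.

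In the paper, set $\Delta(y)=1-y^\alpha$ on $[0,p_j]$ and $\Delta(y)=(1-y)^\beta$ on $[p_j,1]$. When $j=2$, one has $p_1=1-q$, so $a_1\le (1-q+q\Delta(a_2))/(1-q)$. The proof then closes with the one-variable inequality
\[
y^{r-1}\bigl(1-q+q\Delta(y)\bigr)\le 1-q,
\]
which is strict for $y<1$. Its proof uses $q^2\le\frac{r-1}{r-1+\alpha}$ on the first branch, and convexity together with Bernoulli's inequality and $(r-1)(1-p_j)\ge p_j$ on the second.

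For $j\ge3$ the paper proceeds in four steps:
\begin{enumerate}[label=(\arabic*)]
\item It uses $p_i\ge 1-q+\frac{j-2}{r}$ for $i<j$, together with the ordering, to compress the budget to
\[
\Bigl(1-q+\tfrac{j-2}{r}\Bigr)a_1+\tfrac{r-1}{r}\sum_{i=2}^{j-1}a_i\le T(a_j),\qquad T(y)=j-1-q+q\Delta(y).
\]
\item It applies AM--GM to $a_2,\dots,a_{j-1}$.
\item It optimizes the resulting expression in a single variable.
\item It proves that $G(y)=y^{r-j+1}T(y)^{j-1}$ is strictly increasing on $[0,1]$.
\end{enumerate}
Any correct proof has to retain the power $a_j^{r-j+1}$ in some comparable way.
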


The proof of~\cref{thm:product-inequality} is purely analytic and is deferred to~\cref{sec:product-inequality}. We first use it to complete the proof of~\cref{thm:uniform-main}.

\begin{proof}[Proof of inequality in~\cref{thm:uniform-main}]
If some $\F_i$ is empty, the conclusion is immediate. If some $k_i=0$, then either $\F_i$ is empty or $\F_i=\{\varnothing\}$. In the latter case, cross-intersection forces another family to be empty. We may therefore assume that every $k_i$ and every $\mu_{k_i}(\F_i)$ is positive.

Set $p_i=\frac{k_i}{n}$ and $a_i=\frac{\mu_{k_i}(\F_i)}{p_i}$. Relabel the families so that
$$a_1\geq\cdots\geq a_r.$$
We prove that $\prod_i a_i\leq1$.

Let $j$ be the first index such that $\sum_{i=1}^j(1-p_i)\geq1$, and put $q=\sum_{i<j}(1-p_i)$.
Such an index exists because $1-p_i\geq1/r$ for every $i$. Moreover, $q<1$, $q\geq p_j$, and $qn$ is an integer. Since $\F_1,\dots,\F_r$ are $r$-cross-intersecting, the families
$$\F_1,\ldots,\F_{j-1},\partial_{k_j\to qn}\F_j,\binom{[n]}{n},\ldots,\binom{[n]}{n}$$
are also $r$-cross-intersecting. Moreover, $$\sum_{i<j}k_i+qn+(r-j)n=(r-1)n.$$
Applying \cref{lem:uniform-critical-sum},
$$\sum_{i<j}\mu_{k_i}(\F_i)+\mu_{qn}(\partial_{k_j\to qn}\F_j)+(r-j)\leq r-1,$$
which implies
\begin{equation}\label{ineq:uniform-critical-sum-constraint}
\sum_{i<j}\left(\mu_{k_i}(\F_i)-p_i\right)+\mu_{qn}(\partial_{k_j\to qn}\F_j)-q\leq 0.
\end{equation}
Set $\alpha=\frac{\log q}{\log p_j}$ and $\beta=\frac{\log(1-q)}{\log(1-p_j)}.$

\medskip
\noindent\textbf{Case 1: $a_j\geq 1$.} Then $a_i\geq1$ for every $i<j$, and the third branch of \cref{thm:shadow-main} gives
$$\mu_{qn}(\partial_{k_j\to qn}\F_j)-q\geq 1-q-(1-p_ja_j)^{\beta}=(1-p_j)^{\beta}-(1-p_ja_j)^{\beta}\geq 0.
$$
Every term on the left-hand side of~\eqref{ineq:uniform-critical-sum-constraint} is nonnegative. Hence $a_i=1$ for every $i\leq j$. The ordering gives $a_i\leq1$ for every $i>j$, and therefore $\prod_i a_i\leq1$.

\medskip
\noindent\textbf{Case 2: $p_j\leq a_j<1$.} Here $p_j^2\leq \mu_{k_j}(\F_j)< p_j$, the second branch of \cref{thm:shadow-main} gives
$$\mu_{qn}(\partial_{k_j\to qn}\F_j)-q\geq -q\left(1-a_j\right)^\beta.$$
Combining this with~\eqref{ineq:uniform-critical-sum-constraint}, we obtain $\sum_{i<j}p_i(a_i-1)\leq q(1-a_j)^\beta.$ Part (ii) of \cref{thm:product-inequality} now yields $\prod_{i=1}^r a_i<1$.

\medskip
\noindent\textbf{Case 3: $0<a_j<p_j$.} Now
$\mu_{k_j}(\F_j)=p_ja_j<p_j^2$, so the first branch of
\cref{thm:shadow-main} gives
$$\mu_{qn}(\partial_{k_j\to qn}\F_j)-q\geq -q+(p_ja_j)^\alpha=q(a_j^{\alpha}-1).$$
Then~\eqref{ineq:uniform-critical-sum-constraint} implies $\sum_{i<j}p_i(a_i-1)\leq q(1-a_j^{\alpha})$. Part (i) of \cref{thm:product-inequality} again gives $\prod_{i=1}^ra_i<1$.

Suppose that equality holds. Cases~2 and~3 are excluded by their strict bounds, so Case~1 applies. There $a_i=1$ for $i\leq j$ and $a_i\leq1$ for $i>j$. Since $\prod_i a_i=1$, it follows that $a_i=1$ for every $i\in[r]$, or equivalently $\mu_{k_i}(\F_i)=\frac{k_i}{n}$. We use this conclusion in the equality classification below.
\end{proof}

The following lemma shows that the full $1$-stars arising in the
equality argument have a common center.

\begin{lemma}\label{lem:common-star-center}
Let $r\geq2$ and $1\leq k_i<n$ for every $i\in[r]$, and suppose that $\sum_{i=1}^r(n-k_i)\geq n$. For $c_i\in[n]$, let
$$\mathcal S_i=\left\{A\in\binom{[n]}{k_i}:c_i\in A\right\}.$$
If $\mathcal S_1,\ldots,\mathcal S_r$ are $r$-cross-intersecting, then $c_1=\cdots=c_r$.
\end{lemma}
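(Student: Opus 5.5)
We argue by contradiction and build an explicit violating $r$-tuple. Suppose $c_1,\ldots,c_r$ are not all equal. Since the centers are not all equal, the set $C=\{c_1,\ldots,c_r\}$ has at least two elements. We want sets $A_i\in\mathcal S_i$, so $c_i\in A_i$ and $|A_i|=k_i$, with $A_1\cap\cdots\cap A_r=\varnothing$. Equivalently, we want complements $P_i=[n]\setminus A_i$ with $|P_i|=n-k_i$, $c_i\notin P_i$, and $P_1\cup\cdots\cup P_r=[n]$.

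First we decide which element of $C$ each $P_i$ must cover. Every $x\in[n]$ needs some $i$ with $x\in P_i$, and for $x\in C$ the index $i$ must satisfy $c_i\neq x$. Because $C$ has at least two elements, for every $x\in C$ there is some $i$ with $c_i\neq x$. So we can assign each $x\in[n]$ an index $\sigma(x)$ with $c_{\sigma(x)}\neq x$ (for $x\notin C$ any index works). The constraint is the sizes: we need $|\sigma^{-1}(i)|\leq n-k_i$ for each $i$. Then we may enlarge each $P_i\supseteq\sigma^{-1}(i)$ to size exactly $n-k_i$ using elements other than $c_i$. This is possible since $n-k_i\leq n-1$ and $c_i\notin\sigma^{-1}(i)$.

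The counting condition is $\sum_i(n-k_i)\geq n$, with each capacity $n-k_i\geq1$. The main obstacle is to make the assignment respect both the forbidden pairs ($x=c_i$ not allowed into $P_i$) and the capacities. I would view this as a bipartite $b$-matching, or Hall-type, problem. Elements $x\notin C$ can go anywhere, and each $x\in C$ is forbidden from only the indices $i$ with $c_i=x$.

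A direct greedy construction should suffice. Pick two indices $a,b$ with $c_a\neq c_b$. Then place $c_a$ into $P_b$ and every other element of $C$, including $c_b$, into $P_a$ if it differs from $c_a$. That needs capacity $|C|-1$ in $P_a$, which may be too much.

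Instead I would proceed more carefully. Order the elements so that elements of $C$ are placed first. For each $x\in C$ choose the index $i$ with $c_i\neq x$ that currently has the most remaining capacity. Then fill the remaining elements arbitrarily into leftover capacity, which suffices because total capacity is at least $n$.

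To justify the greedy step I would check Hall's condition. For a set $Y$ of elements, the admissible indices are all indices if $|Y\cap C|\geq2$, or if $Y\not\subseteq C$. In either case the available capacity is $\sum_i(n-k_i)\geq n\geq|Y|$.

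If $Y=\{x\}$ with $x\in C$, the admissible indices are those with $c_i\neq x$. That set is nonempty, and each such index has capacity at least $1=|Y|$.

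Hence Hall's theorem for capacities gives the assignment $\sigma$. Completing each $P_i$ to size $n-k_i$ avoiding $c_i$ yields $A_i\in\mathcal S_i$ with empty intersection. This contradicts $r$-cross-intersection, so $c_1=\cdots=c_r$.
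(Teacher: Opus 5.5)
Your proof is correct and follows the paper's outline. Both arguments reduce the claim to finding sets $P_i\subseteq[n]\setminus\{c_i\}$ with $|P_i|=n-k_i$ that cover $[n]$. Both first build an assignment $\sigma$ with $c_{\sigma(x)}\neq x$ and $|\sigma^{-1}(i)|\leq n-k_i$, and then pad each part.

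The difference is how the capacity bound is achieved. The paper starts from an arbitrary admissible assignment and repairs it by local moves. An overfull part $Q_i$ has at least two elements, so it contains some $x\neq c_j$. That $x$ can be moved to an underfull part $Q_j$, and each move lowers the total overflow by one.

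You instead use the capacitated Hall theorem and check its condition. Your case analysis rests on the observation that also drives the paper's exchange step: any $Y$ with $Y\not\subseteq C$ or $|Y\cap C|\geq 2$ is adjacent to every index, so only singletons $\{x\}\subseteq C$ need separate treatment. The paper's route is self-contained and constructive. Yours is shorter once Hall's theorem is granted, which you get by replicating index $i$ into $n-k_i$ copies.

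You should delete the greedy paragraph. Hall's theorem proves that an assignment exists; it does not justify the ``most remaining capacity'' rule, and that rule can fail under bad tie-breaking. Take $n=r=3$, $k_i=2$, $c_i=i$. Placing $1$ into index $2$ and then $2$ into index $1$ (both ties) leaves element $3$ with no admissible index of positive capacity. Yet the cyclic assignment $1\mapsto 2$, $2\mapsto 3$, $3\mapsto 1$ works. Your concluding step obtains $\sigma$ directly from Hall's theorem, so the proof does not depend on the greedy, but the sentence ``to justify the greedy step I would check Hall's condition'' should be removed.
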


\begin{proof}
Suppose, to the contrary, that the centers are not all equal. Then $c_s\neq c_t$ for some $s,t\in[r]$. Assign each
$x\in[n]$ to an index $i$ satisfying $x\neq c_i$. Such an index exists for every $x$: if $x\neq c_s$, choose $i=s$; otherwise $x=c_s\neq c_t$, so choose $i=t$. Thus we obtain an ordered partition $[n]=Q_1\mathbin{\dot\cup}\cdots\mathbin{\dot\cup}Q_r$ with $c_i\notin Q_i$ for every $i$, where empty parts are allowed.

We may further arrange that $|Q_i|\leq n-k_i$ for all $i\in[r]$. Indeed, if $|Q_i|>n-k_i$ for some $i$, then since $\sum_{j=1}^r|Q_j|=n\leq \sum_{j=1}^r(n-k_j)$,
there is some $j$ with $|Q_j|<n-k_j$. As $n-k_i\geq 1$, the set $Q_i$ has at least two elements, so $Q_i$ contains an element $x\neq c_j$. Moving such an $x$ from $Q_i$ to $Q_j$ preserves the condition $c_t\notin Q_t$ and decreases
$$\sum_{t=1}^r\max\{0,|Q_t|-(n-k_t)\}$$
by one. Repeating this step gives a partition satisfying
$|Q_i|\leq n-k_i$ for every $i$.

For each $i$, extend $Q_i$ to a set $P_i\subseteq[n]\setminus\{c_i\}$ of size $n-k_i$. Then
$[n]\setminus P_i\in \mathcal S_i$ for every $i$, whereas
$$\bigcap_{i=1}^r([n]\setminus P_i)=[n]\setminus\bigcup_{i=1}^rP_i=\varnothing.$$
This contradicts $r$-cross-intersection. Therefore, $c_1=\cdots=c_r$.
\end{proof}

\begin{proof}[Proof of equality in~\cref{thm:uniform-main}]
\noindent\textbf{Case 1: Some $k_i=0$.}
The right-hand side of~\eqref{ineq:uniform-main} is $0$. If $\F_i$ is empty, then the left-hand side is also $0$. Otherwise
$\F_i=\{\varnothing\}$, and cross-intersection forces another
family to be empty. Thus equality holds for all $r$-cross-intersecting families in
this case.

Assume from now on that each $k_i>0$ and that equality holds. The preceding proof gives $\mu_{k_i}(\F_i)=\frac{k_i}{n}$ for every $i\in[r]$. Since $k_i\leq\frac{r-1}{r}n$, we have $\sum_{i=1}^r(n-k_i)\geq n$.

\medskip
\noindent\textbf{Case 2:
$\sum_{i=1}^r(n-k_i)=n$.}
Since $k_i\leq n(r-1)/r$, we have $k_i=\frac{r-1}{r}n$ for every $i\in[r]$. When $r=2$, two middle-level sets are disjoint exactly when they are complements. Hence
$$\F_2\subseteq\left\{B\in\binom{[n]}{n/2}:[n]\setminus B\notin\F_1\right\}.$$
Both sides have cardinality $\frac12\binom{n}{n/2}$, so equality holds in this inclusion. This gives precisely the exceptional pairs in the theorem. Suppose that $r\geq3$. By \cref{lem:uniform-partition-equality}, either the families $\F_1,\dots,\F_r$ are the levels of a common $1$-star, or one family is empty and all the others are full. The latter alternative is impossible because every family has density $(r-1)/r$. Hence the families $\F_1,\dots,\F_r$ are the levels of a common $1$-star.

\medskip
\noindent\textbf{Case 3: $\sum_{i=1}^r(n-k_i)>n$.}
Fix $i\in[r]$. We show that $\F_i$ is a $1$-star. If $k_i=n-1$, then $|\F_i|=n-1$, and any family of $n-1$ members of $\binom{[n]}{n-1}$ is a $1$-star. If $k_i\leq n-2$, then the strict inequality gives
$$(k_i+1)+\sum_{t\neq i}k_t\leq (r-1)n\leq(n-1)+\sum_{t\neq i}n.$$
Hence there are integers $\ell_1,\ldots,\ell_r$ satisfying $k_i<\ell_i<n$, $k_t\leq\ell_t\leq n$ for all $t\neq i$, and $\sum_{t=1}^r\ell_t=(r-1)n$.
The families $\partial_{k_t\to\ell_t}\F_t$ remain $r$-cross-intersecting. If $\ell_t<n$, then \eqref{eq:uniform-power-comparison} and
$\mu_{k_t}(\F_t)=k_t/n$ give
$$
\mu_{\ell_t}(\partial_{k_t\to\ell_t}\F_t)
\geq
\left(\frac{k_t}{n}\right)^{
\frac{\log(\ell_t/n)}{\log(k_t/n)}}
=\frac{\ell_t}{n}.
$$
If $\ell_t=n$, then $\F_t\neq\varnothing$ and
$\partial_{k_t\to n}\F_t=\{[n]\}$, so the same conclusion holds.
The critical-sum inequality therefore gives
$$
r-1=\sum_{t=1}^r\frac{\ell_t}{n}\leq
\sum_{t=1}^r
\mu_{\ell_t}(\partial_{k_t\to\ell_t}\F_t)
\leq r-1.
$$
Therefore,
$$
\mu_{\ell_i}(\partial_{k_i\to\ell_i}\F_i)
=\frac{\ell_i}{n}.
$$
Together with $\mu_{k_i}(\F_i)=k_i/n$ and
$k_i<\ell_i<n$, \cref{cor:shadow-at-star} now shows that $\F_i$ is a $1$-star. Since $i$ was arbitrary, all the $\F_i$ are $1$-stars, and \cref{lem:common-star-center} forces their centers to coincide.

Conversely, the corresponding levels of a common $1$-star are
$r$-cross-intersecting and attain equality. The exceptional middle-level pairs are also cross-intersecting and have density $\frac{1}{2}$ in both coordinates. Together with Case~1, this proves all the equality statements in~\cref{thm:uniform-main}.
\end{proof}

The biased theorem uses the same normalization and critical-sum
argument.

\begin{proof}[Proof of \cref{thm:biased-main}]
Suppose first that $p_i=0$ for some $i$. If
$\varnothing\notin\A_i$, then $\mu_0(\A_i)=0$. If
$\varnothing\in\A_i$, the cross-intersection condition forces at
least one of the other families to be empty. Hence the product on the
left is zero in either case, and so equality holds for every
collection satisfying the hypotheses when some $p_i=0$.

Assume from now on that $p_i>0$ for every $i$. If some $\A_i$ is empty, the desired inequality is immediate. For each $i$,
let
$$
\A_i^\uparrow=
\{B\subseteq[n]:A\subseteq B\text{ for some }A\in\A_i\}.
$$
The upward closures remain $r$-cross-intersecting and satisfy
$\mu_{p_i}(\A_i)\leq\mu_{p_i}(\A_i^\uparrow)$. It is therefore
enough to prove the inequality for the upward closures.

Set $a_i=\frac{\mu_{p_i}(\A_i^\uparrow)}{p_i}$, and relabel the families so that $a_1\geq\cdots\geq a_r>0$. Let $j$ be the first index satisfying
$\sum_{i\leq j}(1-p_i)\geq1$, and put $q=\sum_{i<j}(1-p_i)$. Such an index exists because $1-p_i\geq \frac{1}{r}$. Its minimality gives $p_j\leq q<1$, and
$$
\sum_{i<j}p_i+q+(r-j)=r-1.
$$
Every nonempty increasing family contains $[n]$, so $\mu_1(\A_i^\uparrow)=1$. Applying \cref{lem:biased-critical-sum} with
biases $p_1,\ldots,p_{j-1},q,1,\ldots,1$ gives
\begin{equation}\label{eq:biased-main-constraint}
\sum_{i<j}p_i(a_i-1)+\mu_q(\A_j^\uparrow)-q\leq0.
\end{equation}
Applying \cref{thm:biased-shadow-comparison} to
$\A_j^\uparrow$ and treating the same three ranges for $a_j$ as in the
uniform proof, we combine \eqref{eq:biased-main-constraint} with
\cref{thm:product-inequality} to obtain $\prod_{i=1}^r a_i\leq1$. The same argument shows that equality for nonempty increasing families
forces
\begin{equation}\label{eq:biased-equality-density}
\mu_{p_i}(\A_i^\uparrow)=p_i,
\end{equation}
for every $i\in[r]$.

We now classify equality for the original families. If equality
holds, then
$$
\prod_i\mu_{p_i}(\A_i)
\leq\prod_i\mu_{p_i}(\A_i^\uparrow)
\leq\prod_i p_i
=\prod_i\mu_{p_i}(\A_i).
$$
Every factor is positive, and each factor in the middle product is at
least the corresponding factor in the first. Hence
$\mu_{p_i}(\A_i^\uparrow)=\mu_{p_i}(\A_i)$ for every $i$. Thus
$\A_i^\uparrow=\A_i$ and $\mu_{p_i}(\A_i)=p_i$. By
\eqref{eq:biased-power-comparison},
$$
\mu_{(r-1)/r}(\A_i)
\geq\mu_{p_i}(\A_i)^{
\frac{\log((r-1)/r)}{\log p_i}}
=\frac{r-1}{r}
$$
for every $i\in[r]$. Applying \cref{lem:biased-critical-sum} with
all biases equal to $(r-1)/r$ gives $\sum_{i=1}^r\mu_{(r-1)/r}(\A_i)\leq r-1$. Consequently,
\begin{equation}\label{eq:critical-biased-equality}
\mu_{(r-1)/r}(\A_i)=\frac{r-1}{r},
\end{equation}
for every $i\in[r]$.

Suppose first that $r\geq3$. By
\eqref{eq:critical-biased-equality} and \cref{lem:biased-equality},
there is $x\in[n]$ such that $\A_i=\left\{A\subseteq [n]:x\in A\right\}$ for every $i\in[r]$.

It remains to consider $r=2$. Cross-intersection gives
\begin{equation}\label{ineq:biased-complement-inclusion}
    \A_2\subseteq\{B\subseteq[n]:[n]\setminus B\notin\A_1\}.
\end{equation}
If $p_1=p_2=\frac{1}{2}$, then both sides of \eqref{ineq:biased-complement-inclusion} have $\mu_{1/2}$-measure $\frac{1}{2}$, so the inclusion is an equality. If $(p_1,p_2)\neq(\frac{1}{2},\frac{1}{2})$, then, without loss of generality, we may assume that $p_1<\frac{1}{2}$. Equations~\eqref{eq:biased-equality-density} and~\eqref{eq:critical-biased-equality} give $\mu_{p_1}(\A_1)=p_1$ and $\mu_{1/2}(\A_1)=\frac12$. Thus equality holds in \eqref{eq:biased-power-comparison} for
$p=p_1$ and $q=\frac{1}{2}$. The equality statement in~\cref{thm:biased-shadow-comparison} shows that $\A_1$ is a full $1$-star. Then~\eqref{ineq:biased-complement-inclusion} and $\mu_{p_2}(\A_2)=p_2$ force $\A_2=\{B\subseteq[n]:[n]\setminus B\notin\A_1\}=\A_1$.

Conversely, a common full $1$-star attains equality. In the exceptional
case, let $\A$ be increasing and put
$$
\A^*=\{B\subseteq[n]:[n]\setminus B\notin\A\}.
$$
The families $\A$ and $\A^*$ are
cross-intersecting, and when $\mu_{1/2}(\A)=\frac{1}{2}$ both have measure $\frac{1}{2}$. This proves the equality statement.
\end{proof}

\section{The analytic product inequality}\label{sec:product-inequality}
We now prove \cref{thm:product-inequality}. Set $\alpha=\frac{\log q}{\log p_j}$ and $\beta=\frac{\log(1-q)}{\log(1-p_j)}$, and define
$\Delta:[0,1]\to[0,1]$ by
\begin{equation}\label{eq:delta-definition}
\Delta(y)=
\begin{cases}
1-y^\alpha,&0\leq y\leq p_j,\\[1mm]
(1-y)^\beta,& p_j\leq y\leq1.
\end{cases}
\end{equation}
In either case, the hypothesis becomes
$$\sum_{i<j}p_i(a_i-1)\leq q\Delta(a_j).$$
We shall use the following properties of $\Delta$. 

\begin{lemma}\label{lem:delta-properties}
\begin{enumerate}
    \item[(i)] $0<\alpha\leq1\leq\beta$ and
    $q^2\leq \frac{r-1}{r-1+\alpha}$.
    \item[(ii)] The function $\Delta$ is continuous and nonincreasing,
    with $\Delta(0)=1$ and $\Delta(1)=0$.
    \item[(iii)] For every $0\leq y\leq1$,
    $$
    y^{r-1}\bigl(1-q+q\Delta(y)\bigr)\leq1-q,
    $$
    with equality if and only if $y=1$.
\end{enumerate}
\end{lemma}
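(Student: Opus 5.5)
Part (i) comes first because (iii) depends on it. Throughout I use $p_j\le q<1$, which follows from the minimality of $j$, and $q>0$, which holds because $j\ge 2$: indeed $1-p_1<1$. Since $p_j\le q<1$ we get $\log q/\log p_j\in(0,1]$. Likewise $1-q\le 1-p_j$ gives $\beta\ge1$.

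\emph{Part (i), the second inequality.} It is equivalent to $\alpha\le (r-1)(q^{-2}-1)$. Since $p_j\le (r-1)/r$, we have $\alpha\le \log(1/q)/\log\frac{r}{r-1}$. I will then use two elementary bounds: $\log(1/q)=\tfrac12\log(q^{-2})\le\tfrac12(q^{-2}-1)$, and $\log\frac{r}{r-1}\ge \frac1r$. Together they give $\alpha\le \frac r2(q^{-2}-1)\le (r-1)(q^{-2}-1)$, where the last step uses $r\ge2$.

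\emph{Part (ii).} This is immediate. At $y=p_j$ the two formulas agree because $p_j^\alpha=q$ and $(1-p_j)^\beta=1-q$, so both give $1-q$. Each formula is nonincreasing on its interval, and the endpoint values are $\Delta(0)=1$ and $\Delta(1)=0$.

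\emph{Part (iii).} Set $h(y)=y^{r-1}(1-q+q\Delta(y))$.

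On $[0,p_j]$ we have $h(y)=y^{r-1}-q\,y^{r-1+\alpha}$. Its derivative is $y^{r-2}\bigl((r-1)-q(r-1+\alpha)y^\alpha\bigr)$. This is nonnegative as long as $y^\alpha\le \frac{r-1}{q(r-1+\alpha)}$. On $[0,p_j]$ we have $y^\alpha\le p_j^\alpha=q$, and part (i) says exactly that $q\le \frac{r-1}{q(r-1+\alpha)}$. Hence $h$ is nondecreasing on $[0,p_j]$, so $h(y)\le h(p_j)$ there. Moreover $h(p_j)=p_j^{r-1}(1-q)^{\,\cdot}$ more precisely equals $p_j^{r-1}\bigl(1-q+q(1-q)\bigr)$, and this is strictly less than $1-q$ once the second branch is handled. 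So the first branch reduces to the second, which is where part (i) is used.

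On $[p_j,1]$ I must show $y^{r-1}\bigl(1-q+q(1-y)^\beta\bigr)\le 1-q$, with equality only at $y=1$. I would parametrize $1-y=(1-p_j)^s$ with $s\ge1$. Then $(1-y)^\beta=(1-q)^s$, and the goal becomes
$$
\bigl(1-(1-p_j)^s\bigr)^{r-1}\bigl(1-q+q(1-q)^s\bigr)\le 1-q\qquad(s\ge1).
$$
I would prove this by studying the logarithmic derivative in $s$, or equivalently in $y$, namely
$$
\frac{r-1}{y}-\frac{q\beta(1-y)^{\beta-1}}{1-q+q(1-y)^\beta}.
$$
The aim is to show that $h$ is increasing on $[p_j,1]$. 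The constraints available are $q<1$, $1-p_j\ge 1/r$, $p_j\le q$, and $(1-p_j)^\beta=1-q$.

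\emph{Main obstacle.} I expect this second-branch estimate to be the hardest step. A first attempt uses the convexity bound $(1-y)^\beta\le (1-y)\frac{1-q}{1-p_j}$ on $[p_j,1]$, which reduces the claim to $y^{r-1}\bigl(1+c(1-y)\bigr)\le1$ with $c=q/(1-p_j)$. That bound works whenever $c\le r-1$. However, $c$ can reach nearly $r$, so in the remaining regime the exact exponent $\beta$ must be kept. There I would verify positivity of the logarithmic derivative above, splitting at the point where $(1-y)^{\beta-1}$ is no longer controlled by its value at $p_j$. Strictness for $y<1$ will follow from strict monotonicity of $h$.
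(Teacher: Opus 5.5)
\paragraph{What is correct.} Parts (i) and (ii) are correct. For the second inequality in (i), your chain $\alpha\le\log(1/q)/\log\frac{r}{r-1}\le\frac r2(q^{-2}-1)\le(r-1)(q^{-2}-1)$ is a valid alternative to the paper's $q^2=p_j^{2\alpha}\le(\frac{r-1}{r})^{2\alpha}\le\frac{r-1}{r-1+\alpha}$. In (iii), the monotonicity of $h$ on $[0,p_j]$ is right. Your chord argument on $[p_j,1]$ also works when $c=q/(1-p_j)\le r-1$. The paper does not route the first branch through the second: it bounds $h(p_j)=p_j^{r-1}(1-q^2)\le 2(1-q)(\frac{r-1}{r})^{r-1}\le 1-q$ directly.

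\paragraph{The gap.} The problem is the regime $c>r-1$. There you plan to show that $h$ is increasing on $[p_j,1]$ by proving that its logarithmic derivative is positive, and this claim is false. Using $(1-p_j)^\beta=1-q$, the sign of $h'(p_j)$ equals the sign of $(r-1)(1+q)-q\beta\,\frac{p_j}{1-p_j}$, which is negative once $\beta$ is large.

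For example, take $r=2$, $p_2=\frac12$, $p_1=2^{-10}$. Then $j=2$, $q=1-2^{-10}$, $\beta=10$ and $c\approx 2$, and the sign is that of $1-9q<0$. Concretely, $h(\frac12)=2^{-10}-2^{-21}$, $h(0.6)\approx 6.49\cdot 10^{-4}$, and $h(1)=2^{-10}$. So $h$ dips and then recovers on $[p_j,1]$. In exactly the regime you left open, the inequality cannot come from the sign of $h'$. Since you offer nothing else there, (iii) is not proved. Your first-branch reduction, which needs the second branch at $y=p_j$, inherits the gap.

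\paragraph{The missing idea.} What is needed is a sharper pointwise estimate that exploits $q=1-(1-p_j)^\beta$.
\begin{itemize}
\item[1.] Put $q_j=1-p_j$ and $z=(1-y)/q_j$. The claim becomes $(1-q_jz)^{r-1}(1+qz^\beta)\le 1$.
\item[2.] Write $qz^\beta=z^\beta-(q_jz)^\beta$. Comparing secant slopes of the convex map $u\mapsto u^\beta$ on $[q_jz,z]$ and $[q_jz,1]$ gives $qz^\beta\le\frac{z(1-q_j)}{1-q_jz}$.
\item[3.] Bernoulli's inequality together with $(r-1)q_j\ge 1-q_j$ gives $(1-q_jz)^{-(r-1)}-1\ge\frac{z(1-q_j)}{1-q_jz}$.
\end{itemize}
In terms of $y$, this replaces your chord bound by $q(1-y)^\beta\le(1-q)\frac{p_j}{1-p_j}\cdot\frac{1-y}{y}$. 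Its slope at $y=1$ is governed by $p_j/(1-p_j)\le r-1$ rather than by $c$, and it is matched by the Bernoulli bound $y^{-(r-1)}-1\ge(r-1)\frac{1-y}{y}$.

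Strictness for $y<1$ follows because one link of the chain is strict:
\begin{itemize}
\item[$\bullet$] Bernoulli's inequality when $r>2$;
\item[$\bullet$] the step $(r-1)q_j\ge 1-q_j$ when $r=2$ and $q_j\neq\frac12$;
\item[$\bullet$] the bound $1-(q_jz)^\beta<1$ in the remaining case.
\end{itemize}
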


\begin{proof}
The minimality of $j$ gives $q<1$ and
$q+1-p_j\geq1$, hence $p_j\leq q<1$. Therefore, $0<\alpha\leq1\leq\beta$. Moreover,
$$
\log\left(1+\frac{\alpha}{r-1}\right)
\leq\frac{\alpha}{r-1}
\leq\frac{2\alpha}{r}
\leq2\alpha\log\left(1+\frac1{r-1}\right).
$$
It follows that
$$
q^2=p_j^{2\alpha}
\leq\left(\frac{r-1}{r}\right)^{2\alpha}
\leq\frac{r-1}{r-1+\alpha}.
$$
This proves part (i).

At $y=p_j$, the two lines of \eqref{eq:delta-definition} agree,
because $p_j^\alpha=q$ and $(1-p_j)^\beta=1-q$.
Each line is nonincreasing on its interval, proving part (ii).

It remains to prove (iii). First suppose that $0\leq y\leq p_j$.
Then
$$
1-q+q\Delta(y)=1-qy^\alpha.
$$
The derivative of $y^{r-1}(1-qy^\alpha)$ is $y^{r-2}\bigl((r-1)-(r-1+\alpha)qy^\alpha\bigr)$.
Since
$$
qy^\alpha\leq qp_j^\alpha=q^2
\leq\frac{r-1}{r-1+\alpha},
$$
this derivative is nonnegative. Therefore
\begin{align*}
y^{r-1}\bigl(1-q+q\Delta(y)\bigr)\leq p_j^{r-1}(1-q^2)=(1+q)(1-q)p_j^{r-1}\leq2(1-q)\left(\frac{r-1}{r}\right)^{r-1}\leq1-q,
\end{align*}
where the last inequality follows from
$(1+1/(r-1))^{r-1}\geq2$. Since $q<1$, the inequality on this
interval is strict.

Now suppose that $p_j\leq y\leq1$. Put $z=\frac{1-y}{1-p_j}$ and $q_j=1-p_j$.
Then $0\leq z\leq1$, $q_j\geq1/r$,
$1-q=q_j^\beta$, and
$$
y^{r-1}\bigl(1-q+q\Delta(y)\bigr)
=q_j^\beta(1-q_jz)^{r-1}\bigl(1+qz^\beta\bigr).
$$
For $0<z\leq1$, convexity of $u\mapsto u^\beta$ gives
$$
\frac{z^\beta-(q_jz)^\beta}{z-q_jz}
\leq
\frac{1-(q_jz)^\beta}{1-q_jz}
\leq\frac{1}{1-q_jz},
$$
and hence
$qz^\beta=z^\beta-(q_jz)^\beta
\leq\frac{z(1-q_j)}{1-q_jz}.$
On the other hand,
$$
(1-q_jz)^{-(r-1)}-1
\geq\frac{(r-1)q_jz}{1-q_jz}
\geq\frac{z(1-q_j)}{1-q_jz},
$$
where the first inequality is Bernoulli's inequality applied to
$$
(1-q_jz)^{-(r-1)}
=\left(1+\frac{q_jz}{1-q_jz}\right)^{r-1},
$$
and the second uses $(r-1)q_j\geq1-q_j$. Thus
$1+qz^\beta\leq(1-q_jz)^{-(r-1)}$.  If $z>0$, at least one
inequality in this chain is strict.  Indeed, Bernoulli's inequality
is strict when $r>2$, while for $r=2$ the second inequality is strict
unless $q_j=1/2$.  In the remaining case $r=2$ and $q_j=1/2$, the
inequality
$$
\frac{1-(q_jz)^\beta}{1-q_jz}
<\frac{1}{1-q_jz}
$$
is strict.  Hence equality is impossible when $z>0$, or equivalently
when $y<1$.  At $z=0$, that is, $y=1$, equality holds. This proves part (iii).
\end{proof}

\begin{proof}[Proof of \cref{thm:product-inequality}]
Since $p_1\leq \frac{r-1}{r}$, we have $j\geq2$. Put $p=1-q+\frac{j-2}{r}$. Then
\begin{equation}\label{eq:weight-sum}
\sum_{i=1}^{j-1}p_i
=p+(j-2)\frac{r-1}{r}.
\end{equation}
Moreover, for every $i<j$,
$p_i-p=\sum_{\substack{t<j\\t\ne i}}\left(\frac{r-1}{r}-p_t\right)\geq 0$.
Using \eqref{eq:weight-sum} and $a_1\geq a_i$, we obtain
\begin{align*}
&\sum_{i=1}^{j-1}p_i(a_i-1)
-
\left[
 p(a_1-1)
 +\frac{r-1}{r}\sum_{i=2}^{j-1}(a_i-1)
\right]=\sum_{i=2}^{j-1}\left(\frac{r-1}{r}-p_i\right)(a_1-a_i)\geq 0.
\end{align*}
Let $T(y)=j-1-q+q\Delta(y)$. By the theorem hypothesis
$\sum_{i<j}p_i(a_i-1)\leq q\Delta(a_j)$. We therefore obtain
\begin{equation}\label{eq:compressed-budget}
p a_1+\frac{r-1}{r}\sum_{i=2}^{j-1}a_i
\leq p+(j-2)\frac{r-1}{r}+q\Delta(a_j)=T(a_j).
\end{equation}

Because $a_i\leq a_j$ for $i\geq j$,
\begin{equation}\label{eq:tail-compression}
\prod_{i=1}^{r}a_i
\leq a_j^{r-j+1}\prod_{i=1}^{j-1}a_i.
\end{equation}
If $j=2$, then \eqref{eq:compressed-budget} and
\eqref{eq:tail-compression} give
$$
\prod_{i=1}^r a_i
\leq
a_2^{r-1}a_1
\leq
\frac{a_2^{r-1}T(a_2)}{p}
<1,
$$
where the last inequality follows from~\cref{lem:delta-properties}(iii),
since $a_2<1$ and in this case $p=1-q$.  Hence we may assume that
$j\geq3$; in particular, $r\geq3$.
Let $z=\frac{1}{j-2}\sum_{i=2}^{j-1}a_i$.
Then $z\geq a_j$, and AM--GM together with
\eqref{eq:compressed-budget} gives
\begin{equation}\label{eq:one-variable-product}
\prod_{i=1}^{r}a_i
\leq
a_j^{r-j+1}\frac{T(a_j)-\frac{r-1}{r}(j-2)z}{p}
z^{j-2}.
\end{equation}
For fixed $a_j$, differentiation gives
\begin{align*}
&\frac{d}{dz}
\left[\left(T(a_j)-\frac{r-1}{r}(j-2)z\right)z^{j-2}
\right]=(j-2)z^{j-3}
\left(
T(a_j)-(j-1)\frac{r-1}{r}z
\right).
\end{align*}
By \cref{lem:delta-properties}(ii), $T$ is nonincreasing. Moreover,
$T(0)=j-1$, and
$$
q=\sum_{i<j}(1-p_i)\geq\frac{j-1}{r}.
$$
Hence
$T(1)-(j-1)(r-1)/r=(j-1)/r-q\leq0$.  Since
$T(y)-(j-1)(r-1)y/r$ is strictly decreasing, there is a unique
$y_0\in(0,1]$ such that
\begin{equation}\label{eq:y-zero}
T(y_0)=(j-1)\frac{r-1}{r}y_0.
\end{equation}
The derivative in \eqref{eq:one-variable-product} therefore changes
sign at $y_0$, leading to the following two cases.

\textbf{Case 1: $a_j\geq y_0$.} Then $T(a_j)-(j-1)\frac{r-1}{r}a_j\leq 0$, so the right-hand side of
\eqref{eq:one-variable-product} is maximized at $z=a_j$. Hence
$$\prod_{i=1}^{r}a_i\leq\frac{a_j^{r-1}}{p}\left(T(a_j)-(j-2)\frac{r-1}{r}a_j
\right).$$
Since $$\frac{d}{dy}
\left[y^{r-1}\left(1-\frac{r-1}{r}y\right)\right]=(r-1)y^{r-2}(1-y)\geq 0,$$
we have $y^{r-1}\left(1-\frac{r-1}{r}y\right)\leq \frac1r$.  Also,
$$
T(y)-(j-2)\frac{r-1}{r}y
=
1-q+q\Delta(y)
+
(j-2)\left(1-\frac{r-1}{r}y\right).
$$
Combining these facts with
\cref{lem:delta-properties}(iii), we get, for all $0\leq y\leq1$,
\begin{equation}\label{eq:key-bound}
y^{r-1}\left(T(y)-(j-2)\frac{r-1}{r}y\right)\leq p.
\end{equation}
Both estimates used here are strict for $y<1$.  Hence
\eqref{eq:key-bound} is strict for $y<1$, and since $a_j<1$ we obtain
$\prod_{i=1}^{r}a_i<1$.

\textbf{Case 2: $a_j\leq y_0$.} Then
$T(a_j)- (j-1)\frac{r-1}{r}a_j\geq 0$ and the maximizing value is $z=\frac{T(a_j)}{(j-1)\frac{r-1}{r}}$. Hence
$$\prod_{i=1}^{r}a_i\leq a_j^{r-j+1}\frac{T(a_j)^{j-1}}{p(j-1)^{j-1}(\frac{r-1}{r})^{j-2}}.$$

\begin{claim}\label{clm:g-monotone} If $j\geq3$, then the function
$G(y)=y^{r-j+1}T(y)^{j-1}$ is strictly increasing on $[0,1]$.
\end{claim}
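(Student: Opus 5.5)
The plan is to show that $\log G$ has a positive derivative on $(0,1)$; continuity of $G$ at the endpoints then upgrades this to strict monotonicity on $[0,1]$. On each branch of \eqref{eq:delta-definition}, $\Delta$ is differentiable, and $\Delta$ is continuous at $p_j$. It therefore suffices to prove
$$
(r-j+1)\,T(y)\;>\;(j-1)\,q\,\bigl(-y\Delta'(y)\bigr)
$$
for $y\in(0,1)\setminus\{p_j\}$. Equality at isolated points is harmless, but I aim for strictness on the open intervals.

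First I collect the facts about the parameters. The minimality of $j$ and $1-p_i\geq 1/r$ give $(j-1)/r\leq q<1$, hence $j\leq r$ and $r-j+1\geq1$. Also $T(y)=j-1-q+q\Delta(y)\geq j-2+q\Delta(y)\geq 1$, since $j\geq3$.

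\textbf{First branch} ($0<y<p_j$). Here $-y\Delta'(y)=\alpha y^\alpha\leq\alpha p_j^\alpha=\alpha q$. The right-hand side is therefore at most $(j-1)\alpha q^2$. On the left, $T(y)=j-1-qy^\alpha\geq j-1-q^2$. Using \cref{lem:delta-properties}(i) in the form $q^2\leq\frac{r-1}{r-1+\alpha}$, the required inequality reduces to $(r-j+1)(j-1-q^2)>(j-1)\alpha q^2$. This follows from $q^2(r-1+\alpha)\leq r-1$ together with $(r-j+1)(j-1)\geq r-1$, which is $(j-2)(r-j)\geq0$. The strictness comes from $y<p_j$.

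\textbf{Second branch} ($p_j<y<1$). This is the case I expect to be the main obstacle. Here $-y\Delta'(y)=\beta y(1-y)^{\beta-1}$, and the crude bound $\sup\beta y(1-y)^{\beta-1}\leq1$ is not enough. The boundary case $r=j=3$, $p_j=q=2/3$, $\beta=1$ gives $G'(1)=0$ exactly, so the estimate must exploit the coupling between $\beta$ and $q$. I would substitute $z=(1-y)/(1-p_j)\in(0,1)$ and $q_j=1-p_j$, as in the proof of \cref{lem:delta-properties}(iii). This gives $1-q=q_j^\beta$, $\Delta=(1-q)z^\beta$ and $-y\Delta'=\beta(1-q_jz)(1-q)z^{\beta-1}/q_j$. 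Then I would treat $\beta$ via convexity of $u\mapsto u^\beta$, again as in \cref{lem:delta-properties}(iii): a chord bound of the type $\beta(1-q)/q_j\leq (1-(1-q))/(1-q_j)$-style comparisons should trade the factor $\beta q_j^{\beta-1}$ for $q/(1-q_j)$.

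After this trade, the target becomes a polynomial inequality in $z$, $q$, $q_j$, $r$, $j$. I would then use $q_j\geq 1/r$ and $(j-1)/r\leq q$ to verify it, checking that the worst case is $z\to0$, that is $y\to1$. In that limit the inequality reads $(r-j+1)(j-1-q)\geq (j-1)q\cdot\lim(-y\Delta')$, and the limit of $-y\Delta'$ is $0$ when $\beta>1$ and $q$ when $\beta=1$. The case $\beta=1$ forces $q=p_j\leq (r-1)/r$, and then it reduces to $(r-j+1)(j-1)\geq rq(j-1)/(j-1)$, which holds by $q\leq(r-1)/r$ and $(j-2)(r-j)\geq0$. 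If the chord bound proves too lossy, my fallback is to differentiate $\log G$ in $z$ directly and show that the resulting expression is monotone in $\beta$, reducing to $\beta=1$.
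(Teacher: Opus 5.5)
Your first branch is correct and is essentially the paper's computation. One step is missing: the two facts you cite combine only after you add $(r-j+1)+(j-1)\alpha\le r-1+\alpha$, which holds since $\alpha\le1$. With it you get $q^2\bigl((r-j+1)+(j-1)\alpha\bigr)\le r-1\le(r-j+1)(j-1)$.

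The genuine gap is the second branch $p_j<y<1$. This is the substantive part of the claim, and your proposal does not prove it.

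\textbf{The chord trade fails.} The bound $\beta q_j^{\beta-1}\le q/p_j$ is true but far too lossy. Take $j=r$ and $z=1$, that is, $y\downarrow p_j$. The traded inequality becomes $r-1-q^2\ge(r-1)q^2$, that is, $rq^2\le r-1$. This fails for admissible data such as $r=j=3$, $p_1=p_2=0.525$, $p_3=2/3$, $q=0.95$. At that point the untraded inequality holds comfortably: $1.10$ against about $0.52$.

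\textbf{The worst case is not $z\to0$.} For $\beta>1$ the limit as $z\to0$ is $(r-j+1)(j-1-q)$, which is $1.05$ in this example. The value at $z=1$ is about $0.58$, so the minimum over $z$ sits at $z=1$ or at an interior point.

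\textbf{The fallback does not work as stated.} For $j=r$, the value $\beta=1$ is admissible only when $q_j=1/r$. Moreover, at fixed $q_j$ and $z$ the sign expression is not monotone in $\beta$. For $r=j=3$, $q_j=1/2$, $z=1$ it is about $0.85$, $0.58$, $0.76$ at $\beta=\log_2 3$, $3$, $5$.

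\textbf{A minor slip.} For $\beta=1$ the limit of $-y\Delta'(y)$ at $y=1$ is $1$, not $q$. The inequality $(r-j+1)(j-1)\ge rq$ that you then write is nevertheless the correct one.

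You are missing the two ideas the paper uses.

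First, the bound $\beta y(1-y)^{\beta-1}\le 1-(1-y)^\beta=(j-1-T(y))/q$ controls the derivative term by $T$ itself. The sign expression is then at least $rT(y)-(j-1)^2\ge(r-j+1)(j-1)-rq$. This is positive for $3\le j\le r-1$, because then $(r-j+1)(j-1)\ge r$. It fails exactly at $j=r$, where it only gives $r-1-rq\le0$.

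Second, for $j=r$, put $t=1-y$ and $1-q=q_j^\beta$, and regard $q_j^\beta$ and $\beta$ as independent variables. The difference of the two sides is increasing in $q_j^\beta$, so you may take $q_j=1/r$. Then bound $F(t)=(r-1)\beta t^{\beta-1}(1-t)-t^\beta$ over all of $[0,1]$ by its value $(r-1)\tau^{\beta-1}$ at the interior critical point $\tau$. This last step needs separate estimates for $1<\beta\le2$ and $\beta\ge2$.
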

The proof of \cref{clm:g-monotone} is given after the proof of the
theorem.

If $a_j<y_0$, the strict increase of $G$ makes the first inequality
below strict.  If $a_j=y_0$, then $y_0<1$ and the final inequality is
strict by \eqref{eq:key-bound}.  Thus, using \eqref{eq:y-zero},
\begin{align*}
\prod_{i=1}^{r}a_i\leq
\frac{G(y_0)}
{p(j-1)^{j-1}\left(\frac{r-1}{r}\right)^{j-2}}
=\frac{\frac{r-1}{r}y_0^r}{p}\leq1,
\end{align*}
and at least one of the two inequalities is strict. This completes
the proof of the theorem.
\end{proof}

\begin{proof}[Proof of \cref{clm:g-monotone}]
For $0<y\leq p_j$,
\eqref{eq:delta-definition} gives $T(y)=j-1-qy^\alpha$.
Therefore
\begin{align*}
G'(y)=y^{r-j}T(y)^{j-2}
\Bigl(
(r-j+1)(j-1)-qy^\alpha\bigl(r-j+1+(j-1)\alpha\bigr)
\Bigr).
\end{align*}
By $q=p_j^{\alpha}$ and \cref{lem:delta-properties}(i), $qy^\alpha\leq q^2\leq \frac{r-1}{r-1+\alpha}$.
Moreover, since $j\geq 3$ and $0< \alpha\leq 1$,
\begin{align*}
\frac{(r-j+1)(j-1)}{r-j+1+(j-1)\alpha}
-\frac{r-1}{r-1+\alpha}=
\frac{(j-2)\bigl((r-j+1)(r-1)-(j-1)\alpha\bigr)}{\bigl(r-j+1+(j-1)\alpha\bigr)(r-1+\alpha)}\geq 0.
\end{align*}
Hence $G'(y)\geq0$ on $(0,p_j)$.  In fact $G'(y)>0$ there, since
$qy^\alpha<q p_j^\alpha=q^2$ for $y<p_j$.

Now let $p_j<y<1$. Then $T(y)=j-1-q+q(1-y)^\beta$.
The sign of $G'(y)$ is the sign of
\begin{equation}\label{eq:claim-derivative-second-branch}
(r-j+1)T(y)
-(j-1)q\beta y(1-y)^{\beta-1}.
\end{equation}
Since $\beta\geq1$,
$$
1-(1-y)^\beta
=\int_{1-y}^1\beta t^{\beta-1}\,dt
\geq\beta y(1-y)^{\beta-1}.
$$
Also, $q\bigl(1-(1-y)^\beta\bigr)=j-1-T(y)$.
Thus \eqref{eq:claim-derivative-second-branch} is at least
\begin{align*}
(r-j+1)T(y)-(j-1)\bigl(j-1-T(y)\bigr)=rT(y)-(j-1)^2\geq (r-j+1)(j-1)-rq.
\end{align*}
If $3\leq j\leq r-1$, then $j-1\geq2$ and $r-j+1\geq2$, so
$(r-j+1)(j-1)\geq r$.  Since $q<1$, the last expression is strictly
positive.

It remains to consider the endpoint case $j=r$. Put $q_j=1-p_j$ and $t=1-y$.
Then $q_j\geq1/r$ and $1-q=q_j^\beta$. In this case the
nonnegativity of \eqref{eq:claim-derivative-second-branch} is equivalent
to
\begin{equation}\label{eq:endpoint-inequality}
r-2+q_j^\beta+(1-q_j^\beta)t^\beta
\geq
(r-1)(1-q_j^\beta)\beta t^{\beta-1}(1-t).
\end{equation}
For fixed $\beta$, the difference between the two sides is increasing as a function of $q_j^\beta$, because the coefficient of $q_j^\beta$ is $1-t^\beta+(r-1)\beta t^{\beta-1}(1-t)\geq 0$.
Hence it is enough to prove \eqref{eq:endpoint-inequality} for $q_j=1/r$.
Equivalently, it is enough to show
\begin{equation*}
(r-1)\beta t^{\beta-1}(1-t)-t^\beta
\leq
\frac{r-2+r^{-\beta}}{1-r^{-\beta}}.
\end{equation*}

If $\beta=1$, the left-hand side is $r-1-rt\leq r-1$,
while the right-hand side is $r-1$.  We may therefore assume that
$\beta>1$.

We consider the function $F(t)=(r-1)\beta t^{\beta-1}(1-t)-t^\beta$. Then
$$F'(t)=\beta t^{\beta-2}\left((r-1)(\beta-1)-\bigl(r+(r-1)(\beta-1)\bigr)t\right).$$
Hence $F(t)$ is increasing on $[0,\tau]$ and decreasing on $[\tau,1]$, where $\tau=\frac{(r-1)(\beta-1)}{r+(r-1)(\beta-1)}$. Therefore, $F(t)$ attains its maximum at $t=\tau$, and $F(\tau)=(r-1)\tau^{\beta-1}$. It therefore suffices to prove $\tau^{\beta-1}\leq \frac{r-2+r^{-\beta}}{(r-1)(1-r^{-\beta})}$.

Suppose first that $1<\beta\leq 2$. Concavity gives
$1-\tau^{\beta-1}\geq (\beta-1)(1-\tau)=\frac{(\beta-1) r}{r+(r-1)(\beta-1)}$, so the desired inequality follows from
$$\frac{(\beta-1) r}{r+(r-1)(\beta-1)}\geq 1-\frac{r-2+r^{-\beta}}{(r-1)(1-r^{-\beta})}=\frac{1-r^{1-\beta}}{(r-1)(1-r^{-\beta})}.$$
After clearing denominators, this is
$r(1-r^{1-\beta})\leq(\beta-1)(r-1)^2$, which follows from $1-r^{1-\beta}\leq(\beta-1)\log r$ and
$r\log r\leq(r-1)^2$ for $r\geq 3$.

If $\beta\geq 2$, the function
$\left(\frac{(r-1)(\beta-1)}{r+(r-1)(\beta-1)}\right)^{\beta-1}$ is
nonincreasing in $\beta$, because its logarithmic derivative is
$\log(1-y)+y\leq 0$, where
$y=\frac{r}{r+(r-1)(\beta-1)}$. Hence
$$\tau^{\beta-1}\leq\frac{r-1}{2r-1}\leq\frac{r-2}{r-1}\leq\frac{r-2+r^{-\beta}}{(r-1)(1-r^{-\beta})}.$$
It remains only to verify strict increase when $j=r$.  If $\beta=1$,
then $q=p_j=(r-1)/r$, and the expression in
\eqref{eq:claim-derivative-second-branch} equals
$(r-1)(1-y)>0$.  If $\beta>1$, the proof above shows that this
expression is nonnegative and analytic on $(p_j,1)$.  Its limit as
$y\uparrow1$ is $r-1-q>0$, so it is not identically zero on any
subinterval.  It follows that $G$ is strictly increasing on
$(p_j,1)$.  Together with the strict increase on $(0,p_j)$ and
continuity at $p_j$, this proves the claim.
\end{proof}

\section{Concluding remarks}\label{sec:concluding}

\paragraph{$r$-cross $t$-intersection.}
Our results settle the case $t=1$ of two broader problems posed by
Frankl and Tokushige~\cite[Problems~12.10 and~12.11]{FT2018book}. For a
positive integer $t$, families
$\A_1,\ldots,\A_r\subseteq2^{[n]}$ are called $r$-cross $t$-intersecting if
$|A_1\cap\cdots\cap A_r|\geq t$
whenever $A_i\in\A_i$ for every $i\in[r]$.

For $t\geq2$, the same coupling gives an exact critical-sum
inequality. If $\sum_i\ell_i=(r-1)n+t-1$, choose an ordered
decomposition
$$
[n]=C\mathbin{\dot\cup}P_1\mathbin{\dot\cup}\cdots
\mathbin{\dot\cup}P_r,
\qquad |C|=t-1,\quad |P_i|=n-\ell_i,
$$
and set $X_i=[n]\setminus P_i$. Then
$|X_1\cap\cdots\cap X_r|=t-1$, so every collection of $r$-cross
$t$-intersecting families $\F_i\subseteq\binom{[n]}{\ell_i}$ satisfies
$$
\sum_{i=1}^r\mu_{\ell_i}(\F_i)\leq r-1.
$$
A corresponding product theorem would require an analogue of
\cref{thm:shadow-main,cor:shadow-at-star}. A common $t$-star has density
$\binom{k}{t}/\binom{n}{t}$, whereas in other parameter ranges an
Ahlswede--Khachatrian family may be extremal. Any such comparison must
therefore incorporate the relevant extremal families and their
dependence on the parameters.
\medskip

\noindent\textbf{Acknowledgments.} The authors thank Mengyu Cao,
Ting-wei Chao, Xingtong Guo, Guowei Sun, and Haixiang Zhang for
valuable discussions during the fourth ECOPRO Student Research
Program, held at the Institute for Basic Science in the summer of
2026. AI tools were used during the exploratory stage of the project.
All mathematical
arguments and proofs in the manuscript were developed and rigorously verified by
the authors.

\bibliographystyle{abbrv}
\bibliography{reference1}

\end{document}